\documentclass[review,onefignum,onetabnum]{siamonline220329}

\usepackage{lipsum}
\usepackage{amsfonts}
\usepackage{graphicx}
\usepackage{epstopdf}
\usepackage{algorithmic}
\usepackage{xcolor}

\def\sc{\scriptsize}
\def\sl{\small}
\def\ds{\displaystyle}

\ifpdf
  \DeclareGraphicsExtensions{.eps,.pdf,.png,.jpg}
\else
  \DeclareGraphicsExtensions{.eps}
\fi

\graphicspath{{Images/}}

\newcommand{{\sLP}}{\ensuremath{ {\mbox{\tiny{LP}}} }}
\usepackage{enumitem}
\setlist[enumerate]{leftmargin=.5in}
\setlist[itemize]{leftmargin=.5in}

\newsiamremark{remark}{Remark}
\newsiamremark{hypothesis}{Hypothesis}
\crefname{hypothesis}{Hypothesis}{Hypotheses}
\newsiamthm{claim}{Claim}

\headers{Obligate mutualism in a chemostat}{Mtar and Fekih-Salem}

\title{Global dynamics and bifurcation analysis of a chemostat model with obligate mutualism and mortality
\thanks{Submitted to the editor 2026-03-11.
}}

\author{Tahani Mtar\thanks{University of Tunis El Manar, National Engineering School of Tunis, LAMSIN, Tunisia (\email{tahani.mtar@enit.utm.tn}).}
\and Radhouane Fekih-Salem\footnotemark[1]~\thanks{University of Tunis El Manar, National Engineering School of Tunis, LAMSIN, Tunisia    (\email{radhouane.fekih-salem@enit.utm.tn}).}}

\usepackage{amsopn}

\ifpdf
\hypersetup{
  pdftitle={Obligate mutualism},
  pdfauthor={MTAR, FEKIH}
}
\fi

\begin{document}

\maketitle
\begin{abstract}
We propose a system of differential equations modeling the competition between two obligate mutualistic species for a single nutrient in a chemostat. Each species promotes the growth of the other, and growth occurs only in the presence of its partner. The three-dimensional model incorporates interspecific density-dependent growth functions and distinct removal rates. We perform a mathematical analysis by characterizing the multiplicity of equilibria and deriving conditions for their existence and stability. Using \textsc{MatCont}, we construct numerical operating diagrams in the parameter space of dilution rate and input substrate concentration, providing a global view of the qualitative dynamics of the system. One-parameter bifurcation diagrams with respect to the input substrate then reveal a variety of dynamical transitions, including saddle–node, Hopf, limit point of cycles (LPC), period-doubling (PD), and homoclinic bifurcations. When mortality is included, the system exhibits a richer dynamical repertoire than in the mortality-free case, with stable and unstable periodic orbits, tri-stability between equilibria and limit cycles, and several codimension-two bifurcations, including Bogdanov–Takens (BT), cusp of cycles (CPC), resonance points (R1 and R2), and generalized Hopf (GH) points. These features allow coexistence not only around positive equilibria but also along stable limit cycles, reflecting more realistic ecological dynamics. In contrast, neglecting mortality restricts coexistence to equilibria only. Overall, this study highlights the critical role of mortality in shaping complex dynamics in obligate mutualism, producing multistability and oscillatory coexistence patterns that may better represent natural microbial or ecological systems.
\end{abstract}
\begin{keywords}
Codimension-two bifurcations, Generalized Hopf, Homoclinic bifurcation, Limit Point of Cycles, \textsc{MatCont}.
\end{keywords}
\begin{MSCcodes}
  34A34, 34D20, 37N25, 92B05
\end{MSCcodes}
\section{Introduction}	
The chemostat has long served as a fundamental paradigm in mathematical biology and ecology to study microbial growth and species interactions under controlled conditions. 
Building on the pioneering works of Monod \cite{Monod1949} and Novick–Szilard \cite{Novick1950}, it provides a framework to understand how competition, cooperation, and coexistence emerge in nutrient-limited environments.  

Classical chemostat models have revealed key insights into competitive exclusion, persistence, and oscillatory dynamics \cite{HarmandBook2017,SmithBook1995}.  
According to the {\em Competitive Exclusion Principle} (CEP), when multiple species compete for a single limiting resource, only the species that survives at the lowest substrate concentration persists, while all others are excluded.  
However, this prediction contrasts with the biodiversity observed in microbial ecosystems, motivating the study of mechanisms that can promote coexistence.  

Several mechanisms have been proposed in chemostat models to reconcile the CEP with microbial diversity, including flocculation and wall adhesion \cite{FekihJMAA2013,FekihAMM2016,FekihSIADS2019,FekihJBS2025,PilyuginSIAP1999,SARISIADS2026}, 
intra- and interspecific interference \cite{AbdellatifMBE2016,FekihJMAA2025,FekihMB2017}, 
predator-prey or multi-trophic interactions \cite{MtarIJB2021,MtarDcdsB2022,WadeJTB2016}.  

Among these, a natural question arises regarding the role of mutualistic interactions, in which each species enhances the growth of its partner. 
Specifically, it remains to be clarified how mutualism affects coexistence, stability, and the possible emergence of oscillatory or complex dynamics in nutrient-limited chemostat systems. 

In this work, we study two microbial populations interacting through both competition for a single nutrient and mutualistic cooperation. 
Persistence of each species requires the presence of its partner, and we account for distinct removal rates and mortality. 
The dynamics are described by the following system of ordinary differential equations:
\begin{equation}                                \label{ModelYi}
\left\{\begin{array}{lll}
\dot{S}   &=& D(S_{in}-S) - \mu_1(S,X_2)X_1 - \mu_2(S,X_1)X_2,\\[0.3em]
\dot{X}_1 &=& (Y_1\mu_1(S,X_2)-D_1)X_1, \\[0.3em]
\dot{X}_2 &=& (Y_2\mu_2(S,X_1)-D_2)X_2,
\end{array}\right.
\end{equation}
where $S(t)$ denotes the nutrient concentration, $X_i(t)$ the population concentrations, $S_{in}$ the input concentration, $D$ the dilution rate, and $Y_i$ the yield coefficients.
For $i\neq j$, $\mu_i(S,X_j)$ denotes the growth rate of species $X_i$, assumed to be increasing in $S$ and decreasing in the competitor density $X_j$. 
The removal rates $D_i$ are modeled as in \cite{FekihSIADS2021,FekihSIADS2019,Sari2024AIMS} by
$$
D_i = \alpha_i D + a_i, \qquad i=1,2,
$$
where $a_i \ge 0$ represents the specific mortality of species $X_i$, and $\alpha_i\in[0,1]$ decouples the hydraulic retention time $\mathrm{HRT}=1/D$ from the solid retention time $\mathrm{SRT}=1/(\alpha_i D)$ \cite{BenyahiaJPC2012,SariProcesses2022}. Thus, both $D_i>D$ and $D_i<D$ are biologically meaningful.
After rescaling $x_i = X_i/Y_i$, system \cref{ModelYi} becomes
\begin{equation}                             \label{ModelMutualism}
\left\{\begin{array}{lll}
\dot{S}   &=& D(S_{in}-S)- f_1(S,x_2)x_1-f_2(S,x_1)x_2,\\ 
\dot{x}_1 &=& (f_1(S,x_2)-D_1)x_1, \\ 
\dot{x}_2 &=& (f_2(S,x_1)-D_2)x_2,
\end{array}\right.
\end{equation}
where $f_1(S,x_2)=Y_1\,\mu_1(S,Y_2x_2)$ and $f_2(S,x_1)=Y_2\,\mu_2(S,Y_1x_1)$. 
The particular case $D_1 = D_2 = D$ was analyzed by El Hajji \cite{ElhajjiIJB2018,ElhajjiJBD2009}, who showed that coexistence occurs only at equilibria. 
By allowing distinct removal rates, our model significantly extends this framework and leads to a much richer dynamical behavior. In particular, we show that the system may undergo several local and global bifurcations, including saddle–node, Hopf, limit point of cycles (LPC), period-doubling (PD), and generalized Hopf (GH) bifurcations, giving rise to coexistence around stable periodic oscillations.

The main objective of this work is to provide a detailed mathematical and numerical investigation of model \cref{ModelMutualism} in order to understand how density-dependent competition and mutualistic interactions shape the dynamics of microbial populations in a chemostat. 
We derive necessary and sufficient conditions for the existence and local stability of equilibria with respect to the operating parameters $S_{in}$ and $D$. 
The global organization of the dynamics is then explored through operating diagrams and one-parameter bifurcation diagrams with respect to $S_{in}$, computed using the continuation software \textsc{MatCont} \cite{MATCONT2023}. 
These diagrams reveal a wide spectrum of dynamical regimes and bifurcation structures that illustrate the complexity induced by the interplay between competition, mutualism, and mortality.

This paper is organized as follows. 
In \Cref{Sec-AnalMod}, we introduce the assumptions on the growth functions and analyze the existence and local stability of the equilibria of model \cref{ModelMutualism}. 
\Cref{SectionHopfBif} presents numerical evidence of the Hopf bifurcation arising as the control parameter $S_{in}$ varies. 
The particular case without mortality ($D_1=D_2=D$) is then examined in \Cref{SubSec-EquilibSanMort}. A geometric analysis based on the intersection of isoclines and the monotonicity of the vector field in the regions they delimit yields simple criteria for the global stability of the washout equilibrium.
\Cref{Sec-OD} presents the operating diagram obtained numerically with \textsc{MatCont}, while \Cref{Sec-BD} investigates the one-parameter bifurcation diagram with respect to the input concentration $S_{in}$.
Finally, conclusions are drawn in \Cref{Sec-Conc}. Technical results, including the parameter values employed in the simulations, are reported in the Appendices.
\section{Assumptions on the model and mathematical analysis}           \label{Sec-AnalMod}
Consider system \cref{ModelMutualism}. 
For $i,j \in \{1,2\}$ with $i\neq j$, we assume that the growth function $f_i$ is continuously differentiable ($\mathcal{C}^1$) and satisfies the following hypotheses:
\begin{hypothesis}                                        \label{hyp1} 
$f_i(0,x_j)=0$ and $\ds{\frac{\partial f_i}{\partial S}(S,x_j)>0}$, for all $S \geq 0$ and $x_j> 0$.
\end{hypothesis}
\begin{hypothesis}                                        \label{hyp2}
$\ds{ \frac{\partial f_i}{\partial x_j}(S,x_j)>0}$, for all $S> 0$ and $x_j\geq 0$.
\end{hypothesis}
\begin{hypothesis}                                        \label{hyp3}
$f_i(S,0)=0$, for all $S\geq 0$.
\end{hypothesis}
\cref{hyp1} expresses that growth occurs only if substrate is present, and that growth rates increase with the substrate concentration. 
\cref{hyp2} reflects the mutualistic interaction, namely that each species enhances the growth of the other. 
Finally, \cref{hyp3} characterizes the obligate nature of the mutualism: nutrient uptake by one species requires the presence of the other.

We first establish two fundamental properties of chemostat models, which are crucial for the biological consistency of the analysis. More precisely, we show that solutions of system \cref{ModelMutualism} are positive and bounded for all $t \ge 0$. Using classical arguments similar to those in \cite{MtarDcdsB2022}, we obtain the following preliminary result.
\begin{proposition}
Assume that \cref{hyp1,hyp2,hyp3} hold.
For any nonnegative initial condition, the solution of system \cref{ModelMutualism} exists for all $t \geq 0$, remains nonnegative, and is bounded.
In addition, the set 
$$
\Omega = \left\{ (S,x_1,x_2) \in \mathbb{R}_+^3 : \; S+x_1+x_2 \leq \tfrac{D S_{in}}{D_{\min}} \right\}, 
\quad D_{\min} = \min(D,D_1,D_2),
$$
is positively invariant and is a global attractor for the dynamics of system \cref{ModelMutualism}.
\end{proposition}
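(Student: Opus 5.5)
Local existence and uniqueness of solutions follow from the Cauchy--Lipschitz theorem, since each $f_i$ is $\mathcal{C}^1$. The vector field of \cref{ModelMutualism} is therefore locally Lipschitz on a neighbourhood of $\mathbb{R}_+^3$. The plan has three steps: prove nonnegativity, derive a linear differential inequality for the total mass $z=S+x_1+x_2$, and deduce global existence, boundedness, positive invariance of $\Omega$ and attractivity from it.

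\emph{Nonnegativity.} The $x_i$-equations have the form $\dot x_i = g_i(t)x_i$ with $g_i(t)=f_i(S(t),x_j(t))-D_i$ continuous on the existence interval. Hence
$$x_i(t)=x_i(0)\exp\Bigl(\int_0^t g_i(s)\,ds\Bigr)\ge 0,$$
and $x_i\equiv 0$ if $x_i(0)=0$. For $S$, on the face $S=0$ with $x_1,x_2\ge 0$, \cref{hyp1} (and \cref{hyp3} when some $x_j=0$) gives $f_1(0,x_2)=f_2(0,x_1)=0$, so $\dot S = DS_{in}>0$. Thus $S$ cannot cross zero: if $t_0$ were the first time with $S(t_0)=0$ and $S<0$ immediately afterwards, this would contradict $\dot S(t_0)>0$. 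So $\mathbb{R}_+^3$ is positively invariant. One technical point must be handled: \cref{hyp1,hyp2} are stated only for $S\ge 0$, $x_j\ge0$. I would extend $f_i$ in a $\mathcal{C}^1$ way to a neighbourhood, or argue directly on the closed orthant, so that the invariance argument is legitimate. I expect this to be a minor issue.

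\emph{Mass balance.} Adding the three equations, the consumption terms cancel:
$$\dot z = DS_{in} - DS - D_1x_1 - D_2x_2 \le DS_{in} - D_{\min}\, z,$$
using nonnegativity of $S,x_1,x_2$. By Gronwall's lemma (comparison with a linear ODE),
$$z(t)\le \frac{DS_{in}}{D_{\min}} + \Bigl(z(0)-\frac{DS_{in}}{D_{\min}}\Bigr)e^{-D_{\min}t}.$$
Every component is therefore bounded on the maximal existence interval, so no blow-up can occur and solutions exist for all $t\ge 0$. If $z(0)\le DS_{in}/D_{\min}$, then $z(t)\le DS_{in}/D_{\min}$ for all $t\ge0$, which together with nonnegativity shows that $\Omega$ is positively invariant. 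The estimate also gives $\limsup_{t\to\infty} z(t)\le DS_{in}/D_{\min}$, so every trajectory approaches $\Omega$ exponentially fast; in this sense $\Omega$ is a global attractor.

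The main obstacle is the interpretation of ``global attractor.'' The estimate gives convergence in distance to $\Omega$, and entry into any neighbourhood $\{z\le DS_{in}/D_{\min}+\varepsilon\}$ in finite time, but not entry into $\Omega$ itself. If strict entry is needed, I would treat two cases. When $D_{\min}<D$, or when not all of $D,D_1,D_2$ are equal, the inequality can be sharpened near the boundary. When $D=D_1=D_2$, one has $\dot z = D(S_{in}-z)$ exactly, and $z\to S_{in}$. In either case it suffices to state attractivity in the sense that the $\omega$-limit set of every trajectory is contained in $\Omega$, which follows directly from the limsup bound.
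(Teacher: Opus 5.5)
Your proof is correct and follows the classical argument the paper invokes; the paper gives no proof of its own and only cites \cite{MtarDcdsB2022}. The steps are positivity from the factorized $x_i$-equations and $\dot S = DS_{in}>0$ on $\{S=0\}$, then the mass balance $\dot z\le DS_{in}-D_{\min}z$ with a comparison argument, which yields global existence, positive invariance of $\Omega$, and attraction in the sense $\limsup_{t\to\infty} z(t)\le DS_{in}/D_{\min}$. One correction to your side remark on finite-time entry: it fails whenever $D=D_{\min}$, even if $D_1$ or $D_2$ is larger, because on the invariant face $x_1=x_2=0$ with $S(0)>S_{in}$ one has $z(t)=S_{in}+(S(0)-S_{in})e^{-Dt}>S_{in}$ for all $t$; so the $\omega$-limit (distance) sense you fall back on is the right one to state.
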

In this section, we summarize the main results concerning the existence and stability of the equilibria of system \cref{ModelMutualism}.
We first determine the equilibria by setting the right-hand sides of system \cref{ModelMutualism} equal to zero.
They correspond to the nonnegative solutions of the following system:
\begin{equation}                                    \label{ModelSS}
\left\{\begin{array}{lll}
0   &=& D(S_{in}-S)- f_1(S,x_2)x_1-f_2(S,x_1)x_2,\\
0   &=& (f_1(S,x_2)-D_1)x_1, \\ 
0   &=& (f_2(S,x_1)-D_2)x_2.
\end{array}\right.
\end{equation}
From \cref{hyp3}, it follows that system \cref{ModelMutualism} cannot have an equilibrium at which one species is extinct while the other persists.
Indeed, if $x_1=0$, then the third equation of \cref{ModelSS} together with \cref{hyp3} implies that $x_2=0$.
Similarly, if $x_2=0$, then necessarily $x_1=0$.
Consequently, no single-species equilibria exist.

This property reflects the obligate nature of the mutualistic interaction: each species depends on the presence of the other for persistence, so extinction of one inevitably leads to extinction of its partner.
Therefore, system \cref{ModelMutualism} has only two types of equilibria:
\begin{itemize}
\item $\mathcal{E}_0=(S_{in},0,0)$: the washout equilibrium, at which both species are extinct;
\item $\mathcal{E}^*=(S^*,x_1^*,x_2^*)$: a coexistence equilibrium with $x_1^*>0$ and $x_2^*>0$.
\end{itemize}
Setting $x_1=x_2=0$ in system \cref{ModelSS}, the first equation yields $S=S_{in}$.
Hence, the washout equilibrium $\mathcal{E}_0=(S_{in},0,0)$ always exists.
Next, for the positive equilibrium $\mathcal{E}^*$, the components $S^*$, $x_1^*$, and $x_2^*$ must satisfy system \cref{ModelSS} with $x_1^*>0$ and $x_2^*>0$.
Equivalently, $(S^*,x_1^*,x_2^*)$ must satisfy
\begin{align}
D(S_{in}-S^*)  &= D_1x_1^*+D_2x_2^*,  \label{EquSolSet} \\
f_1(S^*,x_2^*) &= D_1,               \label{Equf1=D1} \\
f_2(S^*,x_1^*) &= D_2.               \label{Equf2=D2}
\end{align}
From \cref{EquSolSet}, we obtain the explicit expression for $S^*$:
\begin{equation}                        \label{ExpSetoil}
S^*= S_{in}-\frac{D_1}{D}x_1^*-\frac{D_2}{D}x_2^*.
\end{equation}
Substituting \cref{ExpSetoil} into \cref{Equf1=D1} and \cref{Equf2=D2} yields the following reduced system for $(x_1^*,x_2^*)$:
\begin{equation}                        \label{EqsExiEet}
\left\{
\begin{array}{ll}
f_1\!\left(S_{in}-\tfrac{D_1}{D}x_1-\tfrac{D_2}{D}x_2,\,x_2\right) = D_1,\\[1ex]
f_2\!\left(S_{in}-\tfrac{D_1}{D}x_1-\tfrac{D_2}{D}x_2,\,x_1\right) = D_2.
\end{array}
\right.
\end{equation}

A coexistence equilibrium $\mathcal{E}^*$ exists if and only if system \cref{EqsExiEet} has a solution in the interior $\mathring{M}$ of the set
\begin{equation}                              \label{SetM}
M := \left\{ (x_1,x_2) \in \mathbb{R}_+^2 : \frac{D_1}{D}x_1+\frac{D_2}{D}x_2 \leq S_{in} \right\}.
\end{equation}
In what follows, we denote by $\delta$ the boundary of $M$, defined by
$$
\delta:\quad \frac{D_1}{D}x_1+\frac{D_2}{D}x_2 = S_{in}.
$$
We also introduce the partial derivatives
\begin{equation}                           \label{ExprEFGH}
E= \frac{\partial f_1}{\partial S},\quad 
F= \frac{\partial f_2}{\partial S},\quad 
G= \frac{\partial f_1}{\partial x_2},\quad 
H= \frac{\partial f_2}{\partial x_1},
\end{equation}
all evaluated at the equilibrium $(S^*,x_1^*,x_2^*)$.
Under \cref{hyp1,hyp2}, these quantities are positive.

To solve system \cref{EqsExiEet} in the open set $\mathring{M}$, we first establish the following lemma.
\begin{lemma}                    \label{LemCondExis_xji}
Assume that \cref{hyp1,hyp2,hyp3} hold.
For $i,j\in\{1,2\}$ with $i\neq j$, the equation
\begin{equation}                     \label{EquExis_xji}
f_i\!\left(S_{in}-\tfrac{D_j}{D}x_j,\,x_j\right)=D_i
\end{equation}
has a solution in the interval $[0,\,D S_{in}/D_j]$ if and only if
\begin{equation}                    \label{CondExis_xji}
\max_{x_j \in [0,\,D S_{in}/D_j]}
f_i\!\left(S_{in}-\tfrac{D_j}{D}x_j,\,x_j\right)\geq D_i.
\end{equation}
\end{lemma}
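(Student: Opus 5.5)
Define the auxiliary function
$$
\varphi_i(x_j) := f_i\!\left(S_{in}-\tfrac{D_j}{D}x_j,\,x_j\right), \qquad x_j\in I_j:=[0,\,D S_{in}/D_j].
$$
The statement is then a pure intermediate value argument for $\varphi_i$ on the compact interval $I_j$. For every $x_j\in I_j$ the first argument $S_{in}-\tfrac{D_j}{D}x_j$ lies in $[0,S_{in}]$, so $\varphi_i$ is well defined. Since $f_i$ is $\mathcal{C}^1$, $\varphi_i$ is continuous on $I_j$, and by compactness its maximum over $I_j$ is attained. This is what makes the condition \cref{CondExis_xji} meaningful.

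First I evaluate $\varphi_i$ at the two endpoints of $I_j$. At $x_j=0$, \cref{hyp3} gives $\varphi_i(0)=f_i(S_{in},0)=0$. At $x_j=DS_{in}/D_j$, the substrate argument vanishes, so $\varphi_i(DS_{in}/D_j)=f_i(0,DS_{in}/D_j)=0$ by \cref{hyp1}. Hence $\varphi_i$ vanishes at both ends of $I_j$. Since $D_i>0$, neither endpoint is a solution of \cref{EquExis_xji}, so any solution lies in the interior of $I_j$.

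\emph{Necessity.} If some $\bar x_j\in I_j$ satisfies $\varphi_i(\bar x_j)=D_i$, then trivially $\max_{I_j}\varphi_i\ge\varphi_i(\bar x_j)=D_i$, which is \cref{CondExis_xji}.

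\emph{Sufficiency.} Suppose $\max_{I_j}\varphi_i\ge D_i$, and let $x_j^M\in I_j$ be a point where the maximum is attained. Then $\varphi_i(0)=0<D_i\le\varphi_i(x_j^M)$. By the intermediate value theorem applied to the continuous function $\varphi_i$ on $[0,x_j^M]$, there is some $x_j\in(0,x_j^M]$ with $\varphi_i(x_j)=D_i$. The same argument on $[x_j^M,DS_{in}/D_j]$, using the other endpoint value, gives a second root whenever the inequality is strict. This explains why the equation generically has at least two solutions, and exactly one double root at equality.

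There is no real obstacle here. The only point needing care is justifying the endpoint values: \cref{hyp1} is stated for $x_j>0$, and at the right endpoint $x_j=DS_{in}/D_j>0$ when $S_{in}>0$, so it applies. If $S_{in}=0$, then $I_j=\{0\}$, $\varphi_i\equiv0$, and both sides of the equivalence are false.
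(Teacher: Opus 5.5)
Your proof is correct and takes the same route as the paper. Both arguments use continuity of $\varphi_i$ on the compact interval, the fact that it vanishes at both endpoints (by \cref{hyp3} and \cref{hyp1}), and the intermediate value theorem; your explicit necessity/sufficiency split and the $S_{in}=0$ case are just a more careful version of the paper's one-line argument. One side remark is overstated, though it plays no role in the lemma: ``exactly one double root at equality'' holds only if the maximum of $\varphi_i$ is attained at a single point, as for the unimodal functions of \cref{AppendixA}.
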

\begin{proof}
For $i,j\in\{1,2\}$ with $i\neq j$, let $\phi_i$ be the function defined by
\begin{equation}                          \label{FunPhi_i}
\phi_i(x_j):= f_i\!\left(S_{in}-\tfrac{D_j}{D}x_j,\,x_j\right).
\end{equation}
Since $f_i$ is continuous by assumption, the function $\phi_i$ is continuous on the compact interval $[0,\,D S_{in}/D_j]$.
Moreover, from \cref{hyp3} and \cref{hyp1}, we have
\[
\phi_i(0)=f_i(S_{in},0)=0
\quad\text{and}\quad
\phi_i(D S_{in}/D_j)=f_i(0,D S_{in}/D_j)=0.
\]
Therefore, $\phi_i$ attains a maximum on $[0,\,D S_{in}/D_j]$.
It follows from the intermediate value theorem that equation \cref{EquExis_xji} has a solution in $[0,\,D S_{in}/D_j]$ if and only if
\[
\max_{x_j \in [0,\,D S_{in}/D_j]} \phi_i(x_j)\ge D_i,
\]
which is precisely condition \cref{CondExis_xji} (see \cref{Fig-NmbExSol_xji}).
\end{proof}
\begin{figure}[!ht]
\setlength{\unitlength}{1.0cm}
\begin{center}
\begin{picture}(6.3,5.5)(0,0)
\put(0,0){\rotatebox{0}{\includegraphics[width=6cm,height=5cm]{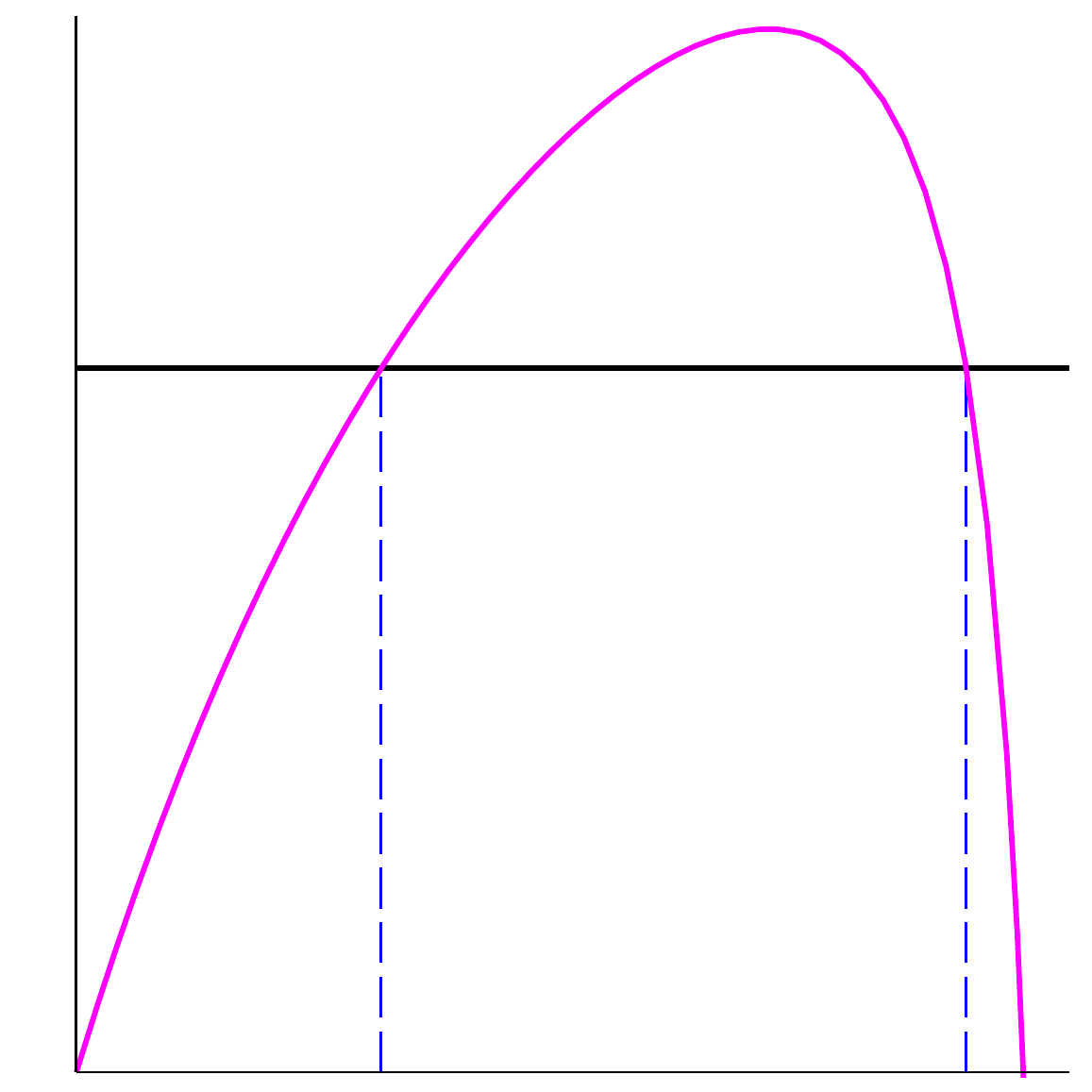}}}
\put(5.1,4.3){{\sc $f_i(S_{in}-\tfrac{D_j}{D}x_j,x_j)$}}
\put(5.8,0.23){{\sc  $x_j$}}
\put(0,3.25){{\sc  $D_i$}}
\put(5.,-0.25){{\sc $x_j^2$  }}
\put(5.45,-0.25){{\sc $\frac{D}{D_j}S_{in}$  }}
\put(1.9,-0.25){{\sc $x_j^1$  }}
\end{picture}
\end{center}
\caption{Graphical illustration of the existence of solutions of equation \cref{EquExis_xji}.
Depending on the parameter values, the horizontal line $y=D_i$ may intersect the curve
$x_j\mapsto f_i(S_{in}-\tfrac{D_j}{D}x_j,x_j)$ at zero, one, or two points.}
\label{Fig-NmbExSol_xji}
\end{figure}

To simplify the analysis, we introduce the following additional hypothesis, which is satisfied by the specific growth functions defined in \cref{SpeciFunc}.
\begin{hypothesis}                         \label{hyp4}
For $i,j\in\{1,2\}$ with $i\neq j$, equation \cref{EquExis_xji} has at most two solutions $x_j^1$ and $x_j^2$ in the interval $(0,\,D S_{in}/D_j)$.
\end{hypothesis}
\Cref{hyp4} characterizes the maximal number of solutions of equation \cref{EquExis_xji} for the specific growth functions introduced in \cref{SpeciFunc}.
The proof is provided in \cref{AppendixA}.
 
In the general case, when the mapping
\[
x_j \longmapsto f_i\!\left(S_{in}-\tfrac{D_j}{D}x_j,\,x_j\right)
\]
is multimodal, the analysis can be conducted using the same geometric arguments as in the unimodal case.
In particular, the possible multiplicity of solutions does not introduce any additional conceptual difficulty.
To establish the existence of a positive equilibrium $\mathcal{E}^*=(S^*,x_1^*,x_2^*)$, we define the curves associated with equations \cref{Equf1=D1} and \cref{Equf2=D2}, as detailed in the following result.
The proof relies on arguments similar to those used in \cite[Lemmas 3.2 and 3.4]{MtarIJB2021}
\begin{lemma}                                   \label{LemFi}
Assume that \cref{hyp1,hyp2,hyp3,hyp4} hold and that condition \cref{CondExis_xji} is satisfied for $i,j\in\{1,2\}$, $i\neq j$. Let $x_j^1$ and $x_j^2$ denote the solutions of the equation $f_i(S_{in}-\tfrac{D_j}{D}x_j,x_j)=D_i$.
\begin{enumerate}[leftmargin=*] 
\item The equation
$$
f_1\!\left(S_{in}-\tfrac{D_1}{D}x_1-\tfrac{D_2}{D}x_2,\,x_2\right)=D_1
$$
defines a smooth function
$$
\begin{array}{ccccl}
F_1  & :  &  [x_2^1,x_2^2]  & \longrightarrow & [0,\, D S_{in}/D_1), \\
     &    &   x_2           & \longmapsto     &  F_1(x_2)=x_1,
\end{array}
$$
such that $F_1(x_2^1)=F_1(x_2^2)=0$. Moreover, the graph $\gamma_1$ of $F_1$ lies in
$\mathring{M}$, the interior of $M$, that is,
$(F_1(x_2), x_2) \in \mathring{M}$ for all $x_2 \in [x_2^1,x_2^2]$ (see \cref{Chap5-FigNull_MulEet}).
The derivative of $F_1$ is given by
\begin{equation}                            \label{ExpF1prim}
F'_1(x_2)=\frac{-D_2E+ DG}{D_1 E}, \quad \text{for all} \quad x_2 \in [x_2^1,x_2^2],
\end{equation}
where $E=\partial f_1/\partial S$ and $G=\partial f_1/\partial x_2$ are evaluated at
$(S,x_2)=(S_{in}-\tfrac{D_1}{D}F_1(x_2)-\tfrac{D_2}{D}x_2,x_2)$.
\item The equation 
\[
f_2\!\left(S_{in}-\tfrac{D_1}{D}x_1-\tfrac{D_2}{D}x_2,\,x_1\right)=D_2
\]
defines a smooth function
$$
\begin{array}{ccccl}
F_2  & :  &  [x_1^1,x_1^2]  & \longrightarrow & [0,\,D S_{in}/D_2), \\
     &    &   x_1             & \longmapsto     &  F_2(x_1)=x_2,
\end{array}
$$
such that $F_2(x_1^1)=F_2(x_1^2)=0$ and the graph $\gamma_2$ of $F_2$ lies in $\mathring{M}$, the interior of $M$, that is, $(x_1, F_2(x_1)) \in \mathring{M}$ for all $x_1 \in [x_1^1,x_1^2]$ (see \cref{Chap5-FigNull_MulEet}). The derivative of $F_2$ is given by
\begin{equation}                            \label{ExpF2prim}
F'_2(x_1)=\frac{-D_1 F+ DH}{D_2 F}, \quad \text{for all} \quad x_1 \in [x_1^1,x_1^2],
\end{equation}
where $F=\partial f_2/\partial S$ and $H=\partial f_2/\partial x_1$ are evaluated at $(S,x_1)=(S_{in}-\tfrac{D_1}{D}x_1-\tfrac{D_2}{D}F_2(x_1), x_1)$.
\end{enumerate}
\end{lemma}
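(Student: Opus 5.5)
The plan is to fix $x_2\in[x_2^1,x_2^2]$ and study the one-variable map
\[
\psi_{x_2}(x_1):=f_1\!\left(S_{in}-\tfrac{D_1}{D}x_1-\tfrac{D_2}{D}x_2,\,x_2\right),
\qquad x_1\in\Bigl[0,\tfrac{D}{D_1}\bigl(S_{in}-\tfrac{D_2}{D}x_2\bigr)\Bigr].
\]
I treat only part 1; part 2 is symmetric, exchanging indices $1\leftrightarrow2$ and the roles of $(E,G)$ and $(F,H)$.

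\textbf{Step 1 (pointwise definition of $F_1$).} By \cref{hyp1}, $\partial\psi_{x_2}/\partial x_1=-\tfrac{D_1}{D}E<0$ whenever $x_2>0$, so $\psi_{x_2}$ is strictly decreasing. At the right endpoint the $S$-argument vanishes, so $\psi_{x_2}=f_1(0,x_2)=0<D_1$. At $x_1=0$ we get $\psi_{x_2}(0)=\phi_1(x_2)$, with $\phi_1$ the function from the proof of \cref{LemCondExis_xji}.

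Under \cref{hyp4} and the unimodal picture of \cref{Fig-NmbExSol_xji}, $\phi_1\ge D_1$ exactly on $[x_2^1,x_2^2]$. I will justify this from $\phi_1(0)=\phi_1(DS_{in}/D_2)=0<D_1$ together with the fact that there are at most two crossings. Hence, for each $x_2$ in this interval, the intermediate value theorem and strict monotonicity give a unique $x_1=:F_1(x_2)\ge0$ with $\psi_{x_2}(x_1)=D_1$. Moreover $F_1(x_2)=0$ iff $\phi_1(x_2)=D_1$, i.e. iff $x_2\in\{x_2^1,x_2^2\}$.

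\textbf{Step 2 (location in $\mathring M$).} Since $f_1(S,x_2)=D_1>0$ forces $S>0$ by \cref{hyp1}, we have $S_{in}-\tfrac{D_1}{D}F_1-\tfrac{D_2}{D}x_2>0$. This strict inequality keeps the graph off the boundary $\delta$. It also gives $F_1(x_2)<DS_{in}/D_1$, which confirms the codomain stated in the lemma.

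Interior membership is then clear away from the axes. On the axis $x_1=0$, only the endpoints $x_2^1$ and $x_2^2$ lie there. Also $x_2\ge x_2^1>0$, because $\phi_1(0)=0<D_1$. I will note that "interior" is to be understood relative to the strip $x_1\ge0$ with $S>0$; the endpoints touch the $x_2$-axis exactly as the statement's $F_1(x_2^i)=0$ indicates.

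\textbf{Step 3 (smoothness and derivative).} Set $\Phi(x_1,x_2)=f_1\!\left(S_{in}-\tfrac{D_1}{D}x_1-\tfrac{D_2}{D}x_2,x_2\right)-D_1$. Since $f_1\in\mathcal C^1$ and $\partial_{x_1}\Phi=-\tfrac{D_1}{D}E\neq0$, the implicit function theorem makes $F_1$ a $\mathcal C^1$ function locally. Uniqueness from Step 1 glues these local branches into a global $\mathcal C^1$ function on $[x_2^1,x_2^2]$; at the endpoints, I extend $\Phi$ to $x_1$ slightly negative, which is harmless since only $S$ and $x_2$ enter $f_1$. Differentiating the identity $\Phi(F_1(x_2),x_2)=0$ gives
\[
E\left(-\tfrac{D_1}{D}F_1'-\tfrac{D_2}{D}\right)+G=0,
\]
which is \cref{ExpF1prim}.

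\textbf{Main obstacle.} The delicate point is Step 1's identification of $\{\phi_1\ge D_1\}$ with the full interval $[x_2^1,x_2^2]$. This needs \cref{hyp4} plus a no-tangency argument: if $\phi_1$ dipped below $D_1$ between the roots, there would be additional crossings unless the dip were tangential. I would handle this by using the unimodality that \cref{hyp4} is meant to encode, as in \cite[Lemma 3.2]{MtarIJB2021}. If only tangential degeneracies remain, I would restrict to $\{\phi_1\ge D_1\}$, on which the construction above still works verbatim.
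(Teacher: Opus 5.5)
Your proposal is correct and follows essentially the route the paper has in mind: the paper gives no details and only defers to the strict-monotonicity-in-$x_1$ plus implicit-function-theorem argument of \cite[Lemmas 3.2 and 3.4]{MtarIJB2021}. You also rightly note that the endpoints $(0,x_2^1)$ and $(0,x_2^2)$ lie on the axis rather than in $\mathring{M}$.

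Your ``main obstacle'' is milder than you fear. Since \cref{hyp4} counts every solution, tangential ones included, $\phi_1-D_1$ has constant sign between consecutive zeros, so $\max\phi_1>D_1$ already forces $\phi_1>D_1$ on $(x_2^1,x_2^2)$ with no unimodality or no-tangency argument needed. The only degenerate case is $\max\phi_1=D_1$, where the lemma as stated really does require your restriction to $\{\phi_1\ge D_1\}$.
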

\begin{figure}[!ht]
\setlength{\unitlength}{1.0cm}
\begin{center}
\begin{picture}(6.5,6.2)(0,0)
\put(-3.6,0){\rotatebox{0}{\includegraphics[width=6.5cm,height=6cm]{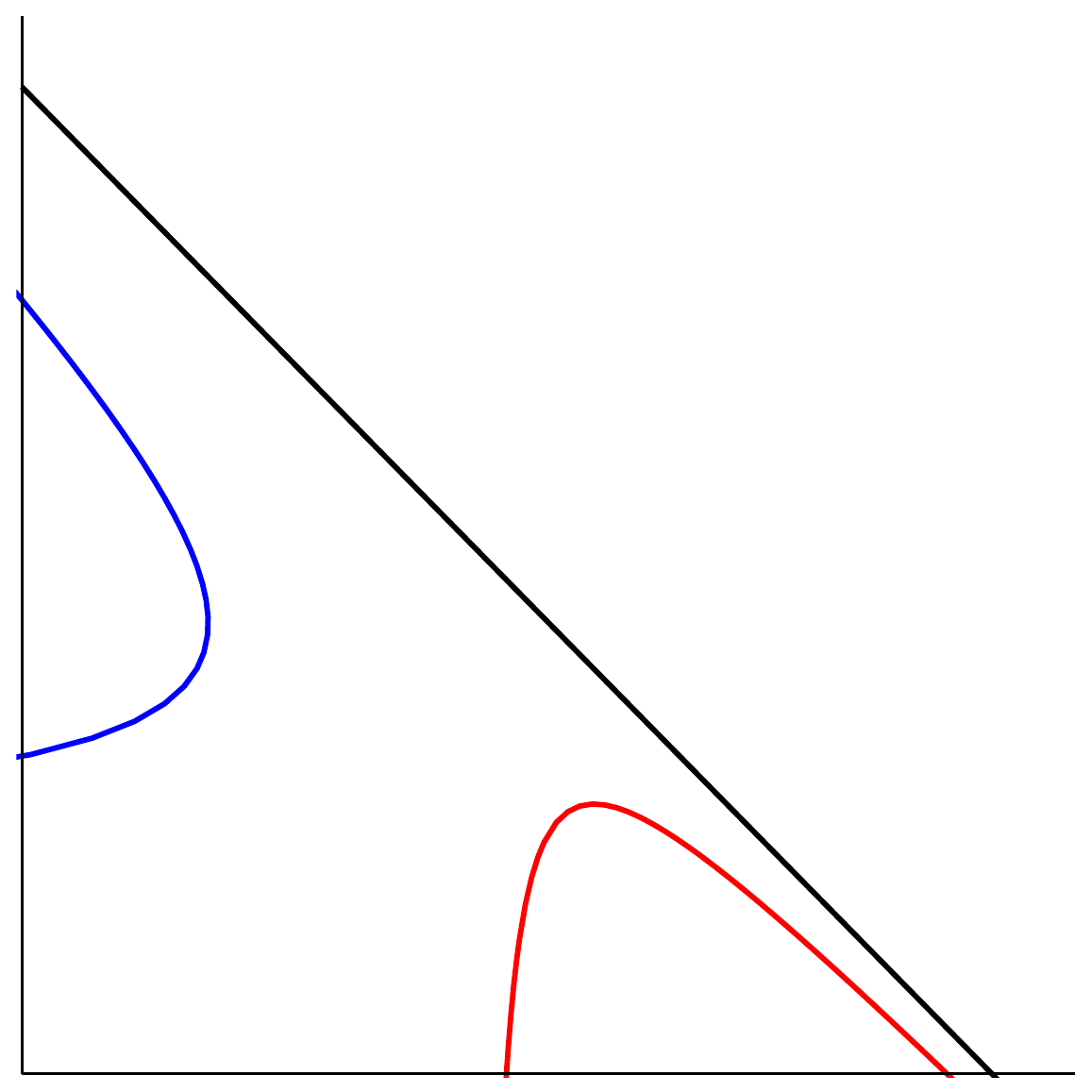}}}
\put(3.6,0){\rotatebox{0}{\includegraphics[width=6.5cm,height=6cm]{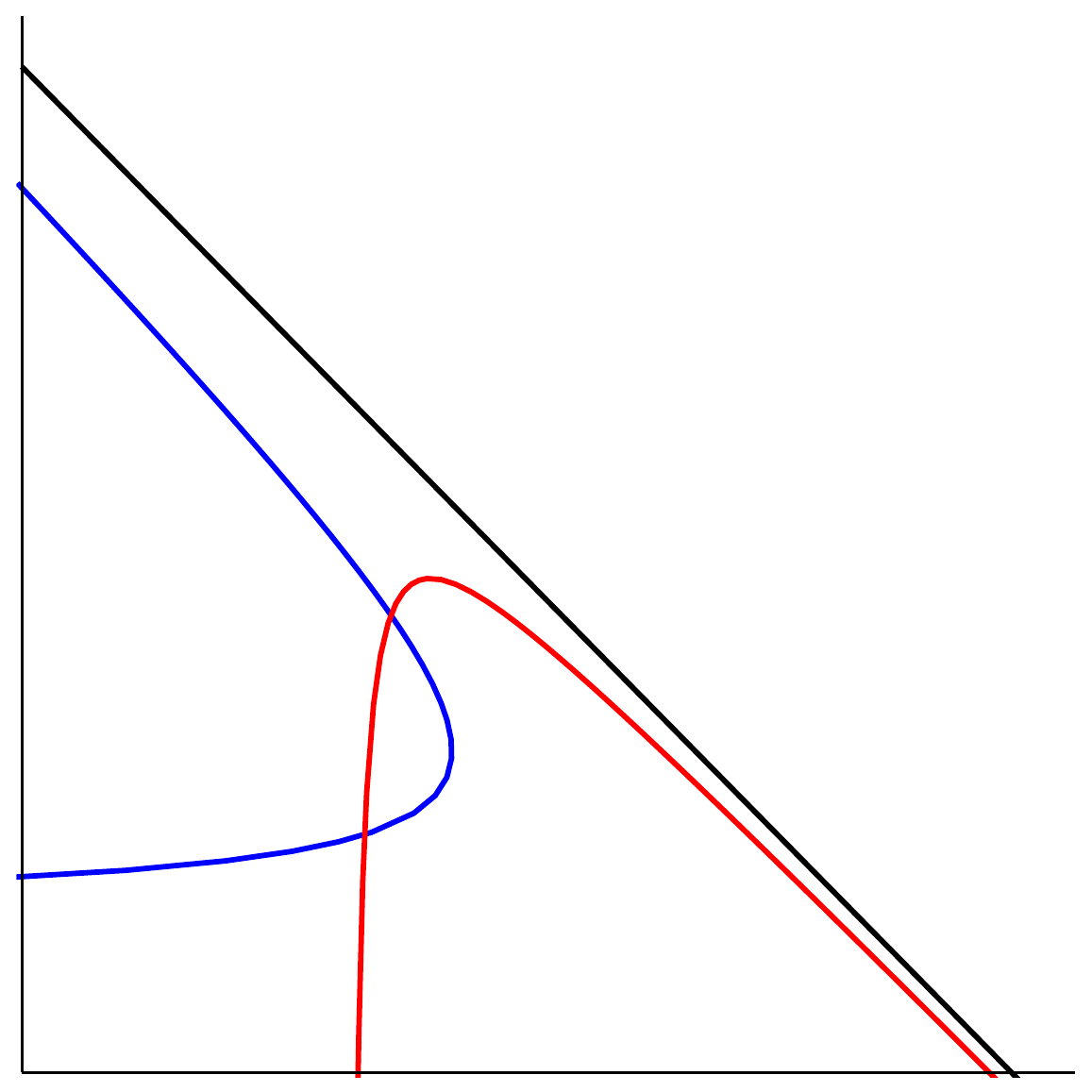}}}
\put(-0.9,5.7){{\sc $(a)$}}
\put(-3.3,5.7){{\sc  $x_2$}}
\put(-4.4,5.4){{\sc  $\frac{D}{D_2}S_{in}$}}
\put(-3.9,4.3){{\sc  $x_2^2$}}
\put(-3.9,1.7){{\sc  $x_2^1$}}
\put(-2.3,2.6){{\sc {\color{blue}$\gamma_1$}}}
\put(-0.4,2.7){{\sc $\delta$}}
\put(-0.7,1.45){{\sc {\color{red}$\gamma_2$}}}
\put(-0.75,-0.2){{\sc  $x_1^1$}}
\put(1.8,-0.2){{\sc  $x_1^2$}}
\put(2.15,-0.25){{\sc  $\frac{D}{D_1}S_{in}$}}
\put(2.7,0.2){{\sc  $x_1$}}
\put(6.5,5.7){{\sc $(b)$}}
\put(3.8,5.7){{\sc  $x_2$}}
\put(2.8,5.55){{\sc  $\frac{D}{D_2}S_{in}$}}
\put(3.3,4.8){{\sc  $x_2^2$}}
\put(3.3,1.1){{\sc  $x_2^1$}}
\put(5.15,3.6){{\sc {\color{blue}$\gamma_1$}}}
\put(6.15,3.5){{\sc $\delta$}}
\put(5.4,0.8){{\sc {\color{red}$\gamma_2$}}}
\put(5.6,-0.2){{\sc  $x_1^1$}}
\put(9.2,-0.2){{\sc  $x_1^2$}}
\put(9.5,-0.25){{\sc  $\frac{D}{D_1}S_{in}$}}
\put(9.9,0.2){{\sc  $x_1$}}
\end{picture}
\end{center}
\caption{Multiplicity of positive equilibria of system \cref{ModelMutualism}.
Intersections of the curves $\gamma_1$ and $\gamma_2$ inside the invariant set $\mathring M$: $(a)$ no positive equilibrium, $(b)$ an even number of positive equilibria.}\label{Chap5-FigNull_MulEet}
\end{figure} 

\Cref{AppendixA} shows that, for $i,j\in\{1,2\}$, with $i\neq j$, the function $F_i(x_j)$ has a unique extremum in the interval $[x_j^1, x_j^2]$ when the specific growth functions \cref{SpeciFunc} are considered.
As a consequence of the preceding lemmas, we obtain the following result, which provides a condition for the existence of positive equilibria.
\begin{proposition}                                  \label{PropExEet}
System \cref{ModelMutualism} has a positive equilibrium
$\mathcal{E}^*=(S^*,x_1^*,x_2^*)$ if and only if the curves $\gamma_1$ and $\gamma_2$ intersect at a positive point $(x_1^*,x_2^*)$ satisfying
\begin{equation}                                    \label{EquExisEet}
x_1 = F_1(x_2), \quad x_2 = F_2(x_1),
\end{equation}
with $S^*$ given by \cref{ExpSetoil}.
Moreover, if a positive equilibrium exists, it is not necessarily unique; generically, system \cref{ModelMutualism} may exhibit an even number of positive equilibria (see \cref{Chap5-FigNull_MulEet}).
\end{proposition}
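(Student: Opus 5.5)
The first part is an equivalence, and I would prove both directions directly from the reduction already carried out before \cref{LemCondExis_xji}.

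\emph{Necessity.} Let $\mathcal{E}^*=(S^*,x_1^*,x_2^*)$ be an equilibrium with $x_1^*>0$ and $x_2^*>0$. Then \cref{EquSolSet,Equf1=D1,Equf2=D2} hold, so $S^*$ is given by \cref{ExpSetoil}. Substituting it, $(x_1^*,x_2^*)$ solves \cref{EqsExiEet}.

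Next I show that the point lies in $\mathring{M}$. Since $f_1(S^*,x_2^*)=D_1>0$ and $f_1(0,\cdot)=0$ by \cref{hyp1}, we get $S^*>0$, which is strict positivity of $S_{in}-\tfrac{D_1}{D}x_1^*-\tfrac{D_2}{D}x_2^*$. This is the observation that the equation for $F_1$ cannot be satisfied on $\delta$.

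Now fix $x_2=x_2^*$. The map $x_1\mapsto f_1(S_{in}-\tfrac{D_1}{D}x_1-\tfrac{D_2}{D}x_2^*,x_2^*)$ is strictly decreasing by \cref{hyp1}. So the first equation of \cref{EqsExiEet} has at most one root in $x_1$, and by \cref{LemFi} that root is $F_1(x_2^*)$. This requires knowing that $x_2^*\in[x_2^1,x_2^2]$. To get it, I would use \cref{hyp2}: since $x_1^*\ge 0$, the value at $x_1=0$ satisfies $f_1(S_{in}-\tfrac{D_2}{D}x_2^*,x_2^*)\ge D_1$, i.e. $\phi_1(x_2^*)\ge D_1$. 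Under \cref{hyp4} and the unimodal shape of $\phi_1$ shown in \cref{Fig-NmbExSol_xji}, the set where $\phi_1\ge D_1$ is exactly $[x_2^1,x_2^2]$. Hence $x_1^*=F_1(x_2^*)$, and by the symmetric argument $x_2^*=F_2(x_1^*)$. So $(x_1^*,x_2^*)\in\gamma_1\cap\gamma_2$.

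\emph{Sufficiency.} Suppose $(x_1^*,x_2^*)\in\gamma_1\cap\gamma_2$ is positive. Then $(x_1^*,x_2^*)$ satisfies \cref{EqsExiEet}, and \cref{LemFi} gives $(x_1^*,x_2^*)\in\mathring{M}$. Defining $S^*$ by \cref{ExpSetoil} gives $S^*>0$, and \cref{ModelSS} holds. This yields a positive equilibrium.

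\emph{Multiplicity.} Non-uniqueness is shown by \cref{Chap5-FigNull_MulEet}(b). For the even-number claim I would use a parity/crossing argument. Consider the function $\Psi(x_2)=F_2(F_1(x_2))-x_2$, or equivalently compare $\gamma_1$ and $\gamma_2$ as closed arcs. At their endpoints on the axes the two curves are ordered the same way:
\begin{itemize}
\item $\gamma_1$ meets the $x_2$-axis at $x_2^1,x_2^2$, with $x_1=0$ there, while $\gamma_2$ has $x_1\ge x_1^1>0$ on its whole domain, so those endpoints of $\gamma_1$ lie outside the region bounded by $\gamma_2$ and the $x_1$-axis;
\item symmetrically, the endpoints of $\gamma_2$ on the $x_1$-axis lie outside the region bounded by $\gamma_1$ and the $x_2$-axis.
\end{itemize}
Hence $\gamma_1$ enters and leaves that region the same number of times. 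Counting generic (transversal) intersections, their number is even.

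The main obstacle is making this parity step rigorous. It needs transversality, which I would take as the meaning of ``generically''. It also needs a Jordan-curve argument in $\mathring{M}$, using that $\gamma_2$ together with the segment $[x_1^1,x_1^2]$ bounds a region whose boundary $\gamma_1$ cannot cross at $x_1=0$. I would invoke the unique-extremum property from \cref{AppendixA} to simplify this topology.
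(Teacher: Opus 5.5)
Your proof is correct. For the equivalence you follow the paper's route: reduce \cref{ModelSS} to \cref{EqsExiEet} in $\mathring{M}$ and appeal to \cref{LemFi}. You also spell out three points the paper's one-line proof leaves implicit. First, $S^*>0$ because $f_1(0,\cdot)=0\neq D_1$. Second, for fixed $x_2$ the root in $x_1$ is unique. Third, $\phi_1(x_2^*)>D_1$ places $x_2^*$ in the domain of $F_1$. This last point also shows that \cref{CondExis_xji} holds automatically whenever a positive equilibrium exists, so $\gamma_1$ and $\gamma_2$ are defined in the first place. One small slip: that inequality comes from $\partial f_1/\partial S>0$ (\cref{hyp1}) together with $x_1^*>0$, not from \cref{hyp2}, and it is strict.

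The genuine difference is the multiplicity claim. The paper offers nothing for it beyond \cref{Chap5-FigNull_MulEet}(b), whereas your parity argument actually proves it. The argument is sound: $\gamma_1$ has both endpoints at $x_1=0<x_1^1$, and it never meets the $x_1$-axis because $x_2\ge x_2^1>0$ along it. You can avoid the Jordan curve theorem, and the need for $F_2>0$ inside its domain, by using $h(x_2)=x_2-F_2(F_1(x_2))$ instead. On every interval where $F_1(x_2)\in[x_1^1,x_1^2]$, $h$ is positive at both ends, and its zeros there are exactly the intersection points.

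You can make ``generically'' precise in the paper's own terms. The tangents $(F_1',1)$ and $(1,F_2')$ are parallel iff $F_1'F_2'=1$, i.e.\ by \cref{Expc3} iff $c_3=-\det(J^*)=0$. So the count is even whenever all positive equilibria are nondegenerate. As a by-product, $c_3$ has the sign of $h'(x_2^*)$, so your argument also shows that the sign of $c_3$ alternates between consecutive intersections along $\gamma_1$. This is the precise content behind the paper's informal remark in \cref{SubSec-EquilibSanMort} that the equilibria born in saddle--node pairs alternate in type.
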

\begin{proof}
A positive equilibrium $\mathcal{E}^*$ of system \cref{ModelMutualism} corresponds to a solution of \cref{EqsExiEet} in $\mathring{M}$.
By \cref{LemFi}, such a solution exists if and only if the curves $\gamma_1$ and $\gamma_2$ intersect at a positive point $(x_1^*,x_2^*)$ satisfying \cref{EquExisEet}, with $S^*$ given by \cref{ExpSetoil}.
\end{proof}

We now analyze the local stability of all equilibria of system~\cref{ModelMutualism}, using the acronym LES to denote locally exponentially stable equilibria.
For the washout equilibrium $\mathcal{E}_0$, stability is determined directly from the eigenvalues of the Jacobian matrix of~\cref{ModelMutualism}, which in this case reduce to its diagonal entries evaluated at $\mathcal{E}_0$.
For the coexistence equilibrium $\mathcal{E}^*$, stability is established using the Routh--Hurwitz criterion.
Using the notation introduced in \cref{ExprEFGH}, the Jacobian matrix of system \cref{ModelMutualism} at an equilibrium $(S, x_1, x_2)$ takes the form
\begin{equation*}
J =
\begin{pmatrix}
-D - Ex_1 - Fx_2 & -f_1(S,x_2) - Hx_2 & - f_2(S,x_1) - Gx_1  \\
            Ex_1 & f_1(S,x_2) - D_1   & Gx_1 \\
            Fx_2 & Hx_2               & f_2(S,x_1) - D_2 \\
\end{pmatrix}.
\end{equation*}
The following result establishes the local stability of the washout equilibrium $\mathcal{E}_0$.
\begin{proposition}                              \label{PropStabE0}
Under \cref{hyp1,hyp2,hyp3,hyp4}, the washout equilibrium $\mathcal{E}_0$ is LES.
\end{proposition}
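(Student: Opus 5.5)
The plan is to evaluate the Jacobian matrix $J$ displayed above at $\mathcal{E}_0=(S_{in},0,0)$, read off its eigenvalues, and check that all of them are strictly negative. At $x_1=x_2=0$, the terms $Ex_1$, $Fx_2$, $Hx_2$ and $Gx_1$ all vanish. Using \cref{hyp3}, we have $f_1(S_{in},0)=0$ and $f_2(S_{in},0)=0$, so the first row reduces to $(-D,0,0)$. The second and third rows reduce to $(0,-D_1,0)$ and $(0,0,-D_2)$ respectively. Hence
\[
J(\mathcal{E}_0)=\operatorname{diag}\bigl(-D,\;f_1(S_{in},0)-D_1,\;f_2(S_{in},0)-D_2\bigr)=\operatorname{diag}(-D,-D_1,-D_2).
\]
The eigenvalues are therefore $-D$, $-D_1$ and $-D_2$.

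Since $D>0$, the first eigenvalue is negative. The remaining two are negative provided $D_i=\alpha_i D+a_i>0$ for $i=1,2$. This holds whenever $\alpha_i>0$ or $a_i>0$, which is implicit in the model: $D_{\min}>0$ is needed for $\Omega$ in the preceding proposition to be well defined and bounded.

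With all three eigenvalues strictly negative, the linearization theorem shows that $\mathcal{E}_0$ is locally exponentially stable for every choice of $S_{in}$ and $D$. This is the expected signature of obligate mutualism: a small inoculum of either species cannot grow without its partner, because $f_i(S,0)=0$ by \cref{hyp3}.

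The only point needing care is the positivity of $D_1$ and $D_2$. The degenerate case $\alpha_i=a_i=0$ would give a zero eigenvalue and must be excluded. I would state this explicitly as a standing assumption rather than treat it as a real obstacle. \cref{hyp1,hyp2,hyp4} play no role beyond ensuring the $\mathcal{C}^1$ setting in which $J$ is computed.
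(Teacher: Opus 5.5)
Your proof is correct and is the same as the paper's: at $\mathcal{E}_0$ the Jacobian is diagonal (with \cref{hyp3} giving $f_i(S_{in},0)=0$), so its eigenvalues are $-D$, $-D_1$, $-D_2$. Your remark that one needs $D_1,D_2>0$, which excludes the case $\alpha_i=a_i=0$, makes explicit an assumption the paper leaves implicit.
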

\begin{proof}
At $\mathcal{E}_0=(S_{in},0,0)$, the Jacobian matrix is diagonal, and its characteristic polynomial is
\begin{equation*}
P(\lambda) = (-D - \lambda)(-D_1 - \lambda)(-D_2 - \lambda).
\end{equation*}
Since all eigenvalues are negative, the equilibrium $\mathcal{E}_0$ is always LES.
\end{proof}

We now study the local stability of the positive equilibria.
Let $J^*$ denote the Jacobian matrix of system \cref{ModelMutualism} evaluated at a positive equilibrium
$\mathcal{E}^* = (S^*,x_1^*,x_2^*)$.
It is given by
\begin{equation*}
J^* =
\begin{pmatrix}
-D - E x_1^* - F x_2^* & -D_1 - H x_2^* & -G x_1^* - D_2 \\
E x_1^*                & 0            & G x_1^*\\
F x_2^*                & H x_2^*      & 0
\end{pmatrix},
\end{equation*}
where the quantities $E$, $F$, $G$, and $H$ defined in \cref{ExprEFGH} are evaluated at $(S^*,x_1^*,x_2^*)$.

The characteristic polynomial of $J^*$ has the form
$$
P(\lambda) = \lambda^3 + c_1 \lambda^2 + c_2 \lambda + c_3,
$$
whose coefficients are
\begin{equation}                               \label{Expr-CRH}
\begin{aligned}
c_1 &= D + E x_1^* + F x_2^*, \\
c_2 &= D_1 E x_1^* + D_2 F x_2^* + (F G + E H - G H) x_1^* x_2^*, \\
c_3 &= (D_1 F G + D_2 E H - D G H) x_1^* x_2^*.
\end{aligned}
\end{equation}
 
Since $c_1>0$, the Routh--Hurwitz criterion implies that the positive equilibrium $\mathcal{E}^*$ is LES if and only if
$$
c_3 > 0 \quad \mbox{and} \quad c_4 := c_1 c_2 - c_3 > 0.
$$
Note that, if $c_1>0$ and $c_3>0$, then $c_4>0$ implies $c_2>0$.

The next result establishes a precise relationship between the sign of the coefficient $c_3$ and the geometric configuration of the curves
$\gamma_1$ and $\gamma_2$, defined respectively by $x_2 \mapsto F_1(x_2)=x_1$ and $x_1 \mapsto F_2(x_1)=x_2$.
More specifically, we relate the determinant of the Jacobian matrix $\det(J^*)$ at the positive equilibrium $\mathcal{E}^*=(S^*,x_1^*,x_2^*)$
to the quantity $F'_1(x_2^*)F'_2(x_1^*)-1$, which measures the relative slopes of the two curves at their intersection point.
We show that the condition $c_3>0$ is equivalent to $F'_1(x_2^*)F'_2(x_1^*)-1<0$.
Geometrically, this means that, at the equilibrium point, the tangent to $\gamma_1$ lies below the tangent to $\gamma_2$.
This relative position plays a key role in determining the local stability of the coexistence equilibrium.

\begin{lemma}  
Let $\mathcal{E}^* = (S^*,x_1^*,x_2^*)$ be a positive equilibrium of system \cref{ModelMutualism}. Then,
\begin{equation}                                \label{Expc3}
c_3 \;=\; -\det(J^*) \;=\; -\frac{D_1 D_2 E F}{D}\bigl(F'_1(x_2^*)F'_2(x_1^*) - 1\bigr)x_1^* x_2^*.
\end{equation}
Consequently,
$$
c_3 > 0 \quad \Longleftrightarrow \quad F'_1(x_2^*)F'_2(x_1^*) - 1 < 0.
$$
\end{lemma}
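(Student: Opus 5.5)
The plan is a direct computation in two independent steps: first show $c_3=-\det(J^*)$ by expanding the $3\times 3$ determinant, then rewrite the bracket $F'_1(x_2^*)F'_2(x_1^*)-1$ using the derivative formulas \cref{ExpF1prim} and \cref{ExpF2prim} of \cref{LemFi}. Both expressions should reduce to the same combination $DGH-D_1FG-D_2EH$, multiplied by $x_1^*x_2^*$ in the first case.

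\textbf{Step 1: the determinant.} I expand $\det(J^*)$ along the second row, or equivalently by cofactors along the first row, using the zeros in positions $(2,2)$ and $(3,3)$. With $a=-D-Ex_1^*-Fx_2^*$, $b=-D_1-Hx_2^*$ and $c=-Gx_1^*-D_2$, one gets
\[
\det(J^*) = -a\,GHx_1^*x_2^* + b\,FGx_1^*x_2^* + c\,EHx_1^*x_2^*.
\]
Substituting $a,b,c$, the cubic terms $EGHx_1^*$ and $FGHx_2^*$ appear with opposite signs and cancel. This leaves
\[
\det(J^*)=(DGH-D_1FG-D_2EH)\,x_1^*x_2^*,
\]
and comparing with \cref{Expr-CRH} gives $c_3=-\det(J^*)$. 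This is also consistent with the general fact that $c_3=-\det(J^*)$ for the characteristic polynomial of a $3\times3$ matrix.

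\textbf{Step 2: the slopes.} At an intersection point of $\gamma_1$ and $\gamma_2$, both \cref{ExpF1prim} and \cref{ExpF2prim} are evaluated at the same point $S=S^*$ given by \cref{ExpSetoil}. Hence $E,F,G,H$ are the same quantities as in $J^*$, and I can multiply the two formulas directly:
\[
F'_1F'_2-1=\frac{(DG-D_2E)(DH-D_1F)-D_1D_2EF}{D_1D_2EF}.
\]
Expanding the numerator, the $D_1D_2EF$ terms cancel and a common factor $D$ comes out. This gives
\[
F'_1F'_2-1=\frac{D\,(DGH-D_1FG-D_2EH)}{D_1D_2EF}.
\]
Multiplying by $-\frac{D_1D_2EF}{D}x_1^*x_2^*$ and comparing with Step 1 yields \cref{Expc3}.

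\textbf{Sign equivalence.} By \cref{hyp1}, $E,F>0$. Since $E^*$ is a positive equilibrium, $x_1^*,x_2^*>0$. The rates $D,D_1,D_2$ must be positive for the formulas of \cref{LemFi} to make sense. So the prefactor $\frac{D_1D_2EF}{D}x_1^*x_2^*$ is positive, and $c_3>0$ if and only if $F'_1(x_2^*)F'_2(x_1^*)-1<0$.

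The only delicate point is bookkeeping: \cref{ExpF1prim} and \cref{ExpF2prim} must be evaluated at the equilibrium point itself. This holds because $(x_1^*,x_2^*)$ lies on both curves, and $S^*$ is the common value $S_{in}-\tfrac{D_1}{D}x_1^*-\tfrac{D_2}{D}x_2^*$. The rest is routine algebra.
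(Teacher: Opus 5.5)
Your proposal is correct and follows essentially the same route as the paper: multiply the slope formulas for $F_1'$ and $F_2'$ at the intersection point, simplify to $D(DGH-D_1FG-D_2EH)/(D_1D_2EF)$, and compare with the expression for $c_3$. Your explicit cofactor expansion confirming $c_3=-\det(J^*)$ and your positivity argument for the sign equivalence, including the implicit requirement $D,D_1,D_2>0$, are details the paper leaves unstated.
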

\begin{proof}
Using the expressions of the derivatives of $F_1$ and $F_2$ given in \cref{ExpF1prim,ExpF2prim}, evaluated at $(x_1^*,x_2^*)$, a straightforward computation yields
\begin{equation}                            \label{RelF12prim}
F'_1(x_2^*)F'_2(x_1^*) - 1  \;=\; \frac{-D\bigl(D_1 F G + D_2 E H - D G H\bigr)}{D_1 D_2 E F}.
\end{equation}
On the other hand, combining the coefficient $c_3$ in the characteristic polynomial \cref{Expr-CRH} with \cref{RelF12prim} immediately gives
$$
c_3 =-\frac{D_1 D_2 E F}{D}\bigl(F'_1(x_2^*)F'_2(x_1^*) - 1\bigr)x_1^* x_2^*,
$$
which proves \cref{Expc3}.
\end{proof}
In the following proposition, we provide necessary and sufficient conditions for the local stability of a positive equilibrium of model \cref{ModelMutualism}.
\begin{proposition}                          \label{PropStabEe}
Assume that \cref{hyp1,hyp2,hyp3,hyp4} and condition \cref{CondExis_xji} hold.
A positive equilibrium $\mathcal{E}^* = (S^*,x_1^*,x_2^*)$ is LES if and only if 
$$
F'_1(x_2^*)\,F'_2(x_1^*) < 1 \quad\text{and}\quad c_4 > 0,
$$
where $c_4$ is given by
\begin{equation}                                  \label{Expc4}
\begin{aligned}
c_4 \;:=\;& D_1 E^2 {x_1^*}^2 + D_2 F^2 {x_2^*}^2 + D D_1 E x_1^* + D D_2 F x_2^* \\
       &+ \bigl((D_1 + D_2) E F + (D - D_1) F G + (D - D_2) E H \bigr) x_1^* x_2^* \\
       &+ (F G + E H - G H)\,\bigl(E {x_1^*}^2 x_2^* + F x_1^* {x_2^*}^2 \bigr).
\end{aligned}
\end{equation}
\end{proposition}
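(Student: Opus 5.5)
The plan is to apply the Routh--Hurwitz criterion for cubic polynomials to the characteristic polynomial $P(\lambda)=\lambda^3+c_1\lambda^2+c_2\lambda+c_3$ of $J^*$, whose coefficients are given in \cref{Expr-CRH}. For a real cubic, all roots have negative real part if and only if $c_1>0$, $c_3>0$ and $c_1c_2-c_3>0$. Since $E,F>0$ by \cref{hyp1} and $x_1^*,x_2^*>0$, we have $c_1=D+Ex_1^*+Fx_2^*>0$ automatically. It remains to translate the other two conditions into the form stated in the proposition.

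For the condition $c_3>0$, invoke the preceding lemma, formula \cref{Expc3}. The prefactor $D_1D_2EF\,x_1^*x_2^*/D$ is strictly positive: $D_1,D_2>0$ because the removal rates are positive in the model, and $E,F>0$. Hence $c_3>0$ is equivalent to $F_1'(x_2^*)F_2'(x_1^*)<1$. Here the existence of $F_1,F_2$ and of their derivatives at the intersection point is guaranteed by \cref{LemFi}, under \cref{hyp1,hyp2,hyp3,hyp4} and condition \cref{CondExis_xji}.

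For the condition $c_1c_2-c_3>0$, expand the product directly. Write
\[
c_1c_2=(D+Ex_1^*+Fx_2^*)\bigl(D_1Ex_1^*+D_2Fx_2^*+Kx_1^*x_2^*\bigr), \qquad K:=FG+EH-GH .
\]
Distributing gives the following terms:
\begin{itemize}
\item $DD_1Ex_1^*$ and $DD_2Fx_2^*$;
\item $D_1E^2{x_1^*}^2$ and $D_2F^2{x_2^*}^2$;
\item the cross terms $(D_1+D_2)EF\,x_1^*x_2^*$;
\item $DK\,x_1^*x_2^*$;
\item $K\bigl(E{x_1^*}^2x_2^*+Fx_1^*{x_2^*}^2\bigr)$.
\end{itemize}
Subtracting $c_3=(D_1FG+D_2EH-DGH)x_1^*x_2^*$ combines with $DK$ in the $x_1^*x_2^*$ coefficient:
\[
DFG+DEH-DGH-D_1FG-D_2EH+DGH=(D-D_1)FG+(D-D_2)EH .
\]
Collecting everything reproduces exactly expression \cref{Expc4}, so $c_4=c_1c_2-c_3$.

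The only point requiring care is this bookkeeping, in particular checking that the $GH$ terms cancel. Once $c_4$ is identified, the Routh--Hurwitz criterion gives that $\mathcal{E}^*$ is LES if and only if $F_1'(x_2^*)F_2'(x_1^*)<1$ and $c_4>0$. As noted earlier in the paper, $c_2>0$ then follows automatically from $c_1>0$, $c_3>0$ and $c_4>0$. This completes the proof.
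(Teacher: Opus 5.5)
Your proposal is correct and follows the same route as the paper, which justifies this proposition through the Routh--Hurwitz discussion preceding it: $c_1>0$ holds automatically, the lemma expressing $c_3$ in terms of $F_1'(x_2^*)F_2'(x_1^*)-1$ converts $c_3>0$ into the slope condition, and $c_4$ is by definition $c_1c_2-c_3$. Your explicit expansion fills in the computation the paper leaves implicit, and it is correct: the $DGH$ terms cancel, leaving $(D-D_1)FG+(D-D_2)EH$ as the $x_1^*x_2^*$ coefficient.
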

\noindent
 
Although it is not possible to prove analytically that the condition $c_4>0$ always holds, we have succeeded in identifying parameter sets for which $c_4$ may change sign. As will be illustrated in the next section, varying one of the control parameters can lead to $c_4$ becoming negative, which corresponds to a loss of stability of the positive equilibrium through a Hopf bifurcation.
\section{Hopf bifurcation}                      \label{SectionHopfBif}
Having established the conditions for the existence and local stability of positive equilibria, we now investigate the onset of oscillatory dynamics in system \cref{ModelMutualism}. In particular, we focus on the occurrence of Hopf bifurcations, where a stable equilibrium loses stability and a limit cycle emerges, as one of the control parameters is varied.

To illustrate these phenomena, we consider the specific growth functions satisfying \cref{hyp1,hyp2,hyp3,hyp4}:
\begin{equation}                                  \label{SpeciFunc}
f_1(S,x_2)= \frac{m_1 S}{K_1+S}\frac{x_2}{L_1+x_2},
\qquad
f_2(S,x_1)= \frac{m_2 S}{K_2+S}\frac{x_1}{L_2+x_1},
\end{equation}
where $m_1,m_2$ denote the maximum growth rates, $K_1,K_2$ are the Michaelis–Menten constants associated with the nutrient uptake, and $L_1,L_2$ are positive constants describing the mutualistic interaction. 
The biological parameter values are given in \cref{Tab-Allpar} (line 1). 
In the following analysis, we fix the dilution rate at $D=0.2$ and vary the input concentration $S_{in}$ to examine the bifurcation structure of the system.

In this parameter regime, the system undergoes a rich bifurcation scenario as the input concentration $S_{in}$ varies. 
In particular, two positive equilibria, denoted by $\mathcal{E}_1^*$ and $\mathcal{E}_2^*$, emerge through a Limit Point (LP) bifurcation at
$S_{in}=\sigma_1 \approx 2.850$ (see \cref{FigVP}(a)). 
The equilibrium $\mathcal{E}_2^*$ is unstable since $c_3<0$, which occurs precisely when the tangent to $\gamma_1$ at $(x_1^*,x_2^*)$ lies above that of $\gamma_2$ at the same point.
By contrast, $\mathcal{E}_1^*$ also arises unstable, but now with $c_3>0$ and $c_4<0$ for
$\sigma_1 < S_{in} < \sigma_6 \approx 3.238$ (see \cref{FigVP}(b)).
 
The critical value $\sigma_6$, defined by the condition $c_4(S_{in})=0$, marks a change in the stability of $\mathcal{E}_1^*$ without collision with any other equilibrium.
For $S_{in}>\sigma_6$, the equilibrium $\mathcal{E}_1^*$ becomes LES, as both conditions $c_3>0$ and $c_4>0$ are satisfied. The stabilization of $\mathcal{E}^*$ at the critical value $\sigma_6$ as $S_{in}$ increases naturally raises the question whether $\sigma_6$ corresponds to a Hopf bifurcation.
\begin{figure}[!ht]
\setlength{\unitlength}{1.0cm}
\begin{center}
\begin{picture}(6.3,5.3)(0,0)
\put(-3.3,0){\rotatebox{0}{\includegraphics[width=4cm,height=5cm]{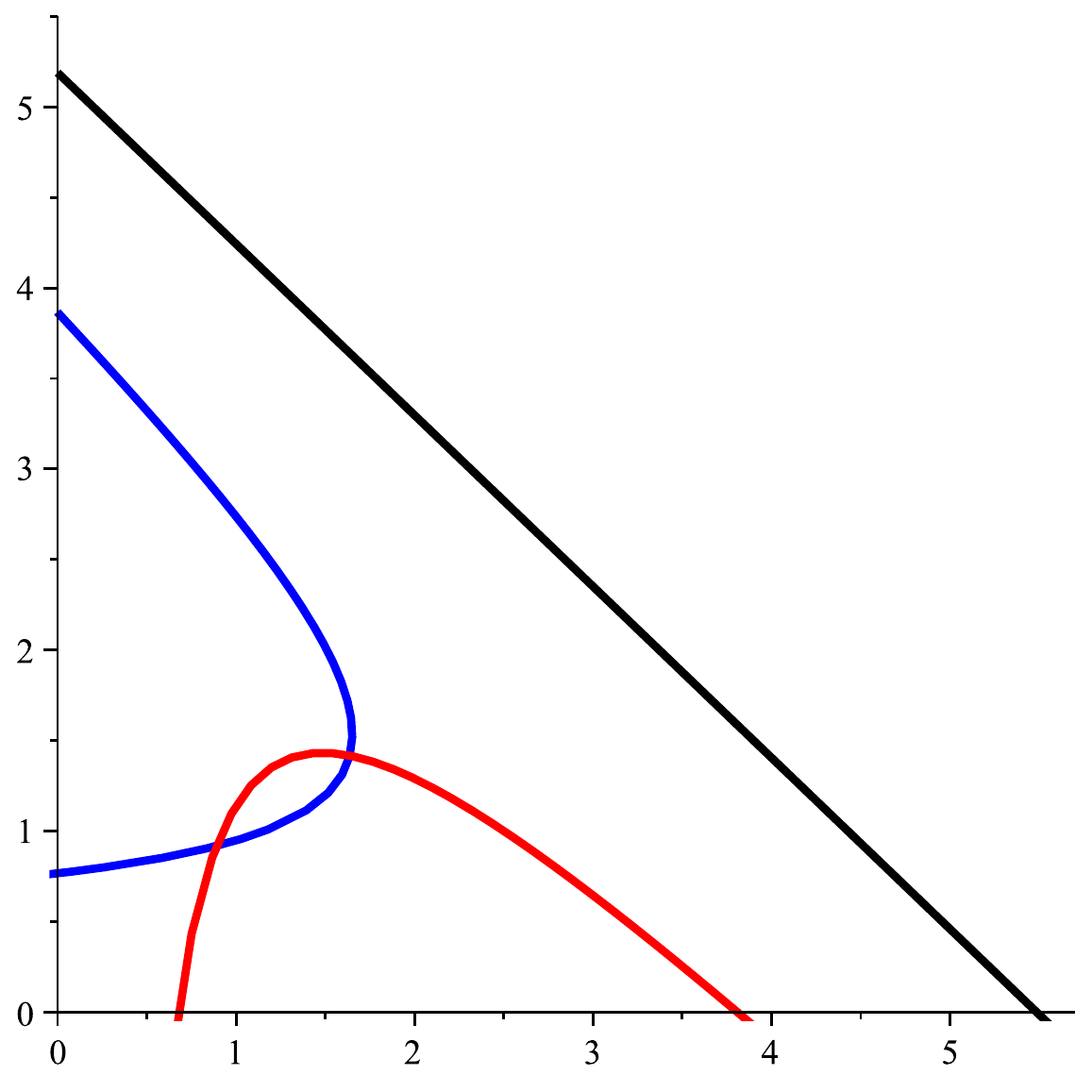}}}
\put(1.15,0){\rotatebox{0}{\includegraphics[width=4cm,height=5cm]{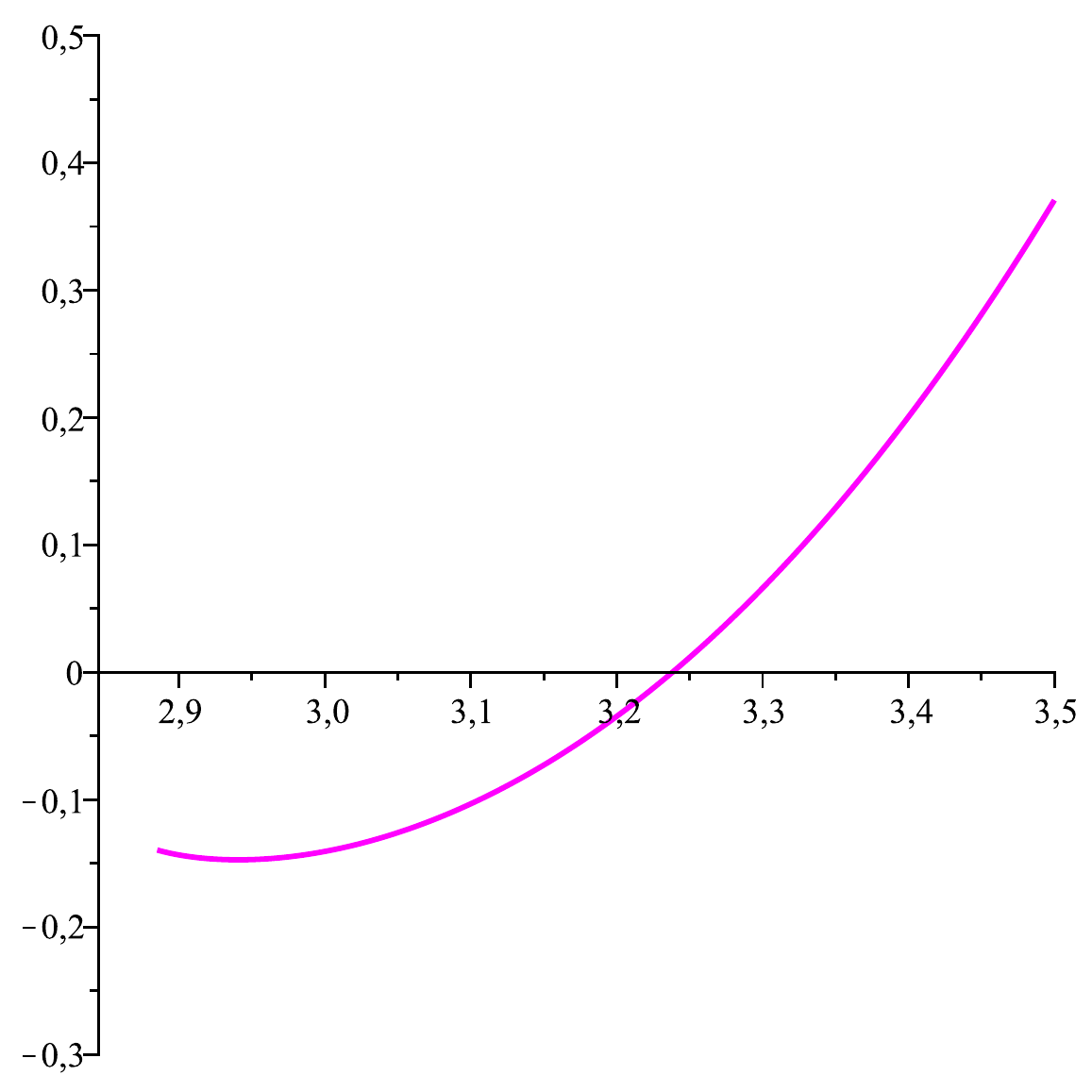}}}
\put(5.8,0){\rotatebox{0}{\includegraphics[width=4cm,height=5cm]{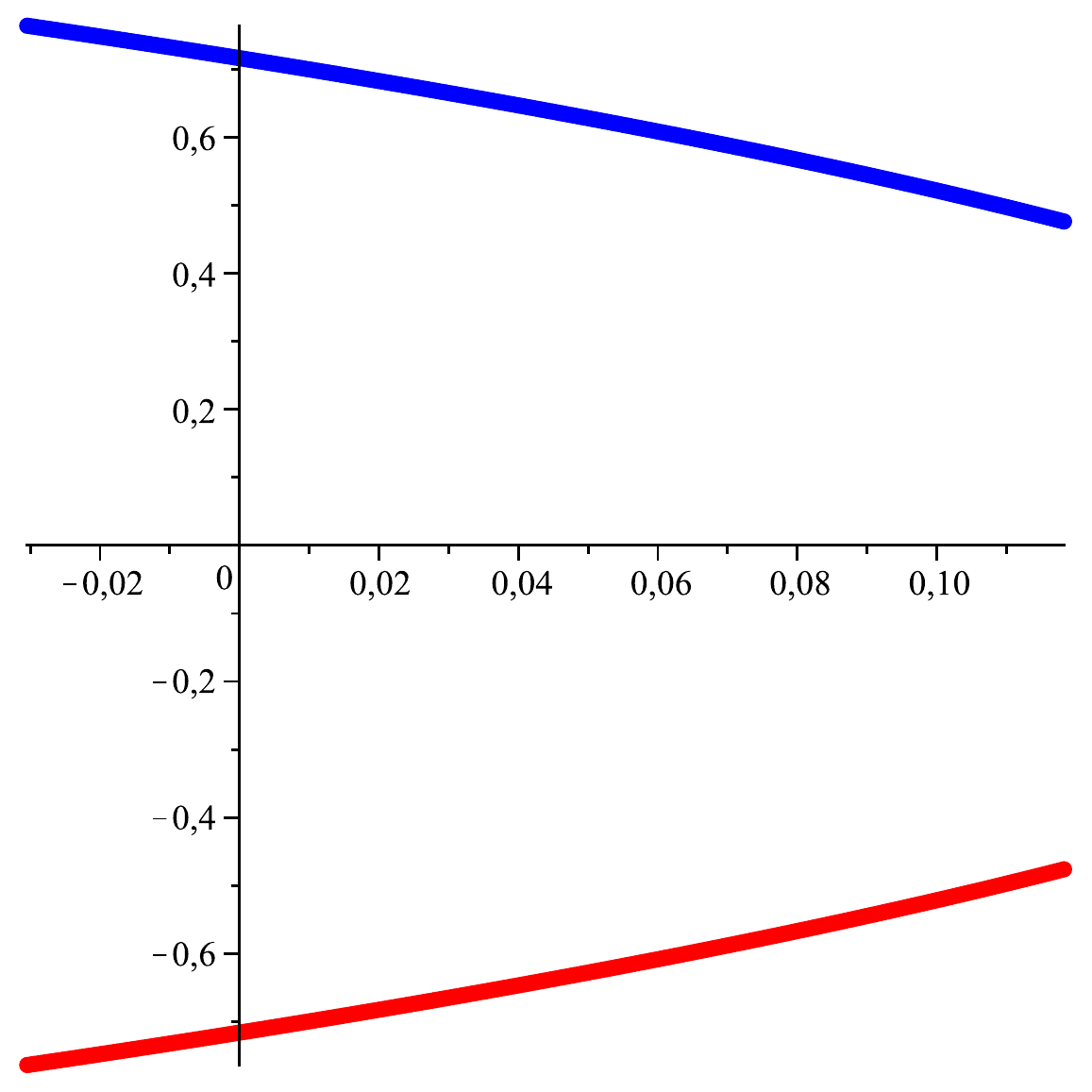}}}
\put(-1.8,5){{\sc (a)}}
\put(-1.4,2.8){{\sc $\delta$}}
\put(-1.95,1.6){{\sc {\color{blue} $\mathcal{E}_2^*$}}}
\put(-2.08,1.46){{\sc {\color{blue} $\bullet$}}}
\put(-2.4,0.9){{\sc {\color{blue} $\mathcal{E}_1^*$}}}
\put(-2.58,1.05){{\sc {\color{blue} $\bullet$}}}
\put(-2.98,0.5){{\sc {\color{red} $\mathcal{E}_0$}}}
\put(-3.15,0.3){{\sc {\color{red} $\bullet$}}}
\put(-2.35,2.6){{\sc {\color{blue} $\gamma_1$}}}
\put(-1.1,1){{\sc {\color{red} $\gamma_2$}}}
\put(0.55,0.45){{\sc $x_1$}}
\put(-3,4.75){{\sc $x_2$}}
\put(2.8,5){{\sc (b)}}
\put(4.65,4){{\sc {\color{magenta} $c_4$}}}
\put(3.5,1.6){{\sc $\sigma_6$}}
\put(3.55,1.85){{\sc  $\dagger$}}
\put(5.1,1.85){{\sc  $S_{in}$}}
\put(7.6,5){{\sc (c)}}	
\put(6.2,5){\sc $\nu (S_{in})$}	
\put(9.85,2.5){\sc $\mu (S_{in})$}
\put(7.2,0.6){\sc {\color{red} $\lambda_-$}}
\put(6,0.35){\sc {\color{red} \vector(4,1){0.5}}}
\put(7.2,4.3){\sc {\color{blue} $\lambda_+$}}
\put(6,4.65){\sc {\color{blue} \vector(4,-1){0.5}}}
\end{picture}
\end{center}
\caption{(a) Existence and instability of two positive equilibria for $\sigma_1 < S_{in} < \sigma_6$ with $(S_{in},D)=(3,0.2)$;
(b) sign change of $c_4(S_{in})$ for $S_{in} \geq \sigma_6$ with $D=0.2$;
(c) evolution of a pair of complex-conjugate eigenvalues as $S_{in}$ decreases. Unstable equilibria are indicated in blue.}
\label{FigVP}
\end{figure}

To substantiate the occurrence of a Hopf bifurcation at $\sigma_6$, we numerically computed the eigenvalues of the Jacobian matrix $J^*$ of system \cref{ModelMutualism} at the equilibrium $\mathcal{E}_1^*$ by evaluating the roots of its characteristic polynomial as the input concentration $S_{in}$ varies. 
For all $S_{in}>\sigma_1$, the spectrum of $J^*$ consists of one negative real eigenvalue and a pair of complex-conjugate eigenvalues
$$
\lambda_{\pm}=\mu(S_{in}) \pm i \nu(S_{in}),
$$
where $\mu(S_{in})>0$ for $S_{in}\in[\sigma_1,\sigma_6)$ and $\mu(S_{in})<0$ for $S_{in}>\sigma_6$. 
At the critical value $S_{in}=\sigma_6$, the pair $\lambda_{\pm}$ is purely imaginary, that is, $\mu(\sigma_6)=0$ and $\nu(\sigma_6)\neq 0$.
Moreover, the transversality condition
\[
\frac{\partial \mu}{\partial S_{in}}(\sigma_6) > 0
\]
is verified numerically, ensuring that the eigenvalues cross the imaginary axis transversally.
As illustrated in \cref{FigVP}(c), decreasing $S_{in}$ causes the pair $\lambda_{\pm}$ to cross the imaginary axis at $S_{in}=\sigma_6$, from the left to the right half-plane.

Consequently, a Hopf bifurcation occurs at $S_{in}=\sigma_6$. When $S_{in}$ decreases below this critical value, the equilibrium $\mathcal{E}_1^*$ becomes unstable and a stable limit cycle appears. The bifurcation diagram in \cref{Fig-DB-D02} confirms this scenario, showing the emergence of stable limit cycles and sustained oscillations in agreement with the bifurcation analysis.
\section{Case without mortality}         \label{SubSec-EquilibSanMort}
In this section, we consider a particular case of system \cref{ModelMutualism} in which species mortality is neglected, that is, we assume $D_1 = D_2 = D$.
Under this assumption, system \cref{ModelMutualism} reduces to
\begin{equation}                                \label{ModelDiEgaux}
\left\{\begin{array}{lll}
\dot{S}   &=& D(S_{in}-S) - f_1(S,x_2)x_1 - f_2(S,x_1)x_2,\\
\dot{x}_1 &=& (f_1(S,x_2)-D)x_1, \\ 
\dot{x}_2 &=& (f_2(S,x_1)-D)x_2.
\end{array}\right.
\end{equation}
This simplified system has been previously investigated by El Hajji et al.
\cite{ElhajjiIJB2018,ElhajjiJBD2009}, where the existence and local stability of equilibria are characterized in terms of the maximum values of the density-dependent growth rates and the dilution rate $D$.
Here, we adopt a different perspective based on the intersection of the isoclines $\gamma_i$ and their relative position.
This geometric approach provides a transparent and intuitive criterion for both the existence of positive equilibria and their local stability.

Using the general results established for distinct removal rates ($D_i \neq D$), we now specialize our analysis to the case $D_i = D$ for $i=1,2$, and obtain the following result.               
\begin{proposition}                               \label{PropExisStab}    
Assume that \cref{hyp1,hyp2,hyp3,hyp4} and condition \cref{CondExis_xji} hold. 
Then, for system \cref{ModelDiEgaux}, all equilibria together with the necessary and sufficient conditions for their existence and local exponential stability are summarized in \cref{Tab-ExiStabDegaux}.  
\end{proposition}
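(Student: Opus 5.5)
The plan is to specialize the general results of \Cref{Sec-AnalMod} to $D_1=D_2=D$ and show that the positive equilibria then split cleanly into stable and unstable ones according to the sign of $F'_1(x_2^*)F'_2(x_1^*)-1$. The table presumably lists $\mathcal{E}_0$ (always existing and LES) and the positive equilibria $\mathcal{E}_1^*$, $\mathcal{E}_2^*$. Their existence conditions are exactly \cref{CondExis_xji} together with the intersection of $\gamma_1$ and $\gamma_2$ in $\mathring M$.

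\textbf{Washout and existence.} For $\mathcal{E}_0$, existence is immediate and stability follows from \cref{PropStabE0}: the eigenvalues are $-D$ (triple), independently of the parameters. For existence of positive equilibria, I invoke \cref{LemCondExis_xji}, \cref{LemFi} and \cref{PropExEet} with $D_1=D_2=D$. The set $M$ becomes the triangle $x_1+x_2\le S_{in}$, and by \cref{ExpF1prim,ExpF2prim}
$$F'_1=\frac{G-E}{E},\qquad F'_2=\frac{H-F}{F}.$$
Under \cref{hyp4} and the unimodality of $F_i$ (\cref{AppendixA}), the curves $\gamma_1$ and $\gamma_2$ generically meet in zero or two points. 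This gives the entries ``no positive equilibrium'' and ``two positive equilibria $\mathcal{E}_1^*,\mathcal{E}_2^*$''.

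\textbf{Stability of positive equilibria.} I use \cref{PropStabEe}, which requires $c_3>0$ and $c_4>0$. The key step is to show that, when $D_i=D$, the condition $c_4>0$ holds automatically whenever $c_3>0$. Substituting $D_1=D_2=D$ into \cref{Expc4} gives
$$c_4=DE^2x_1^2+DF^2x_2^2+D^2Ex_1+D^2Fx_2+2DEF\,x_1x_2+(FG+EH-GH)(Ex_1^2x_2+Fx_1x_2^2).$$
The middle coefficient $(D-D_1)FG+(D-D_2)EH$ vanishes. Moreover, $c_3=D(FG+EH-GH)x_1x_2$ by \cref{Expr-CRH}, so $c_3>0$ is equivalent to $FG+EH-GH>0$. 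Under that condition every term of $c_4$ is positive, hence $c_4>0$. Therefore $\mathcal{E}^*$ is LES if and only if $F'_1(x_2^*)F'_2(x_1^*)<1$, which is the same as $c_3>0$ by \cref{Expc3}. Geometrically, this means the tangent to $\gamma_1$ lies below that of $\gamma_2$.

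\textbf{Assigning the two equilibria.} It remains to show that, of the two intersection points, exactly one satisfies $F'_1F'_2<1$. The approach is to use the ordering of the crossings along $\gamma_2$: since both curves start and end on the axes inside $\mathring M$, the slope comparison must alternate at consecutive transversal crossings. Hence $\mathcal{E}_1^*$ is LES and $\mathcal{E}_2^*$ is a saddle with $c_3<0$, i.e.\ unstable.

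This alternation argument is the main obstacle. The cleanest route would be to make the intersection-index argument rigorous: parametrize $\gamma_2$, consider $h(x_1)=F_1(F_2(x_1))-x_1$ on the relevant arc, and note that $h'$ changes sign between consecutive simple zeros. This holds provided the $F_i$ are unimodal, which is the purpose of \cref{AppendixA}. The same index argument applies without change if there are more than two crossings.
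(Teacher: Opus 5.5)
Your core argument is the paper's proof. $\mathcal{E}_0$ is always LES with triple eigenvalue $-D$, and existence of $\mathcal{E}^*$ follows from \cref{PropExEet} with $D_i=D$. Since $(D-D_1)FG+(D-D_2)EH$ vanishes in $c_4$ while $c_3=D(FG+EH-GH)x_1^*x_2^*$, the condition $c_3>0$ makes every term of $c_4$ positive, so LES reduces to $F_1'(x_2^*)F_2'(x_1^*)<1$; the paper states the same fact as $FG+EH-GH=(1-F_1'F_2')EF$.

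Your final step assigning stability to $\mathcal{E}_1^*$ and $\mathcal{E}_2^*$ is not needed. \cref{Tab-ExiStabDegaux} lists only a generic $\mathcal{E}^*$, with existence condition ``$\gamma_1$ and $\gamma_2$ intersect'' and stability condition $F_1'(x_2^*)F_2'(x_1^*)<1$. Your claim of ``zero or two'' intersections is also not justified here. \cref{AppendixA} proves unimodality only for \cref{SpeciFunc}, and unimodality alone does not stop the increasing branches of $\gamma_1$ and $\gamma_2$ from crossing several times, which is why the paper only asserts an even number.
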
                             
\begin{table}[ht] 
\caption{Equilibria of system \cref{ModelDiEgaux}, their components, and the corresponding conditions for existence and local stability. The functions $F_1$ and $F_2$, as well as the associated isoclines $\gamma_1$ and $\gamma_2$, are defined in \cref{LemFi} for $D_i = D$.}
\label{Tab-ExiStabDegaux} 
\begin{center}
\begin{tabular}{ @{\hspace{1mm}}l@{\hspace{2mm}} @{\hspace{2mm}}l@{\hspace{2mm}}  @{\hspace{2mm}}l@{\hspace{2mm}}   @{\hspace{2mm}}l@{\hspace{2mm}}  @{\hspace{1mm}}l@{\hspace{1mm}} }
& Components             & Existence condition & Stability condition \\ \hline 
$\mathcal{E}_0$ 
& $S=S_{in}$, $x_1=x_2=0$ 
& Always              
& Always        
\\ \hline  
$\mathcal{E}^*$ 
& \hspace{-0.2cm}
\begin{tabular}{l}     
  $S^*=S_{in}-x_1^*-x_2^*$, \\ 
  $x_1^*=F_1(x_2^*)$, \ $x_2^*=F_2(x_1^*)$ 
\end{tabular} 
& $\gamma_1$ and $\gamma_2$ intersect  
& $F_1'(x_2^*)F_2'(x_1^*) < 1$          
\\
\end{tabular}
\end{center}
\end{table}
\begin{proof}
The washout equilibrium $\mathcal{E}_0 = (S_{in},0,0)$ always exists. 
The coordinates of a positive equilibrium $\mathcal{E}^*$, as well as its existence condition, follow directly from \cref{PropExEet} by setting $D_i = D$ in the functions $F_i$ defined in \cref{LemFi}. 
For stability, \cref{PropStabE0} shows that, when $D_i = D$, the Jacobian matrix at $\mathcal{E}_0$ has a triple eigenvalue equal to $-D$, so $\mathcal{E}_0$ is always LES. 
For $\mathcal{E}^*$, \cref{PropStabEe} gives the stability condition reported in \cref{Tab-ExiStabDegaux}. In particular, we have
$$
c_3 > 0 \quad \Longleftrightarrow \quad F_1'(x_2^*)F_2'(x_1^*) < 1,
$$
since the coefficient $c_4$ defined in \cref{Expc4} is positive when $D_i = D$. Indeed, in this case all terms in the expression of $c_4$ are positive. Moreover, using relation \cref{RelF12prim}, we obtain
\[
F G + E H - G H \;=\; \big(1-F_1'(x_2^*)F_2'(x_1^*)\big) E F,
\]
which is positive whenever $c_3 > 0$, confirming the stated stability condition.
\end{proof}

Since the washout equilibrium $\mathcal{E}_0$ always exists and is LES, its global stability can occur only in the absence of any positive equilibrium. Indeed, positive equilibria emerge through saddle–node bifurcations, where stable nodes and saddle points alternate. 

To analyze the global stability of $\mathcal{E}_0$, the three-dimensional system \cref{ModelDiEgaux} can be reduced to a planar system. Introducing the total mass density 
\[
z = S + x_1 + x_2,
\] 
and expressing the dynamics in terms of $(z, x_1, x_2)$, the system can be rewritten as
\begin{equation}                          \label{ModelDiEgaux_Varz}
\left\{
\begin{aligned}
\dot{z}   &= D(S_{in}-z),\\
\dot{x}_1 &= \bigl(f_1(z-x_1-x_2,x_2)-D\bigr)x_1, \\ 
\dot{x}_2 &= \bigl(f_2(z-x_1-x_2,x_1)-D\bigr)x_2.
\end{aligned}
\right.
\end{equation}
Let $(z(t), x_1(t), x_2(t))$ be a solution of \cref{ModelDiEgaux_Varz}.  
From the first equation, we immediately obtain
\[
z(t) = S_{in} + \bigl(z(0) - S_{in}\bigr)e^{-Dt}.
\]  
Hence, system \cref{ModelDiEgaux_Varz} is equivalent to the following non-autonomous system of two differential equations:
\begin{equation}                          \label{ModelDiEgaux_Asym}
\left\{
\begin{aligned}
\dot{x}_1 &= \bigl(f_1(S_{in} + (z(0)-S_{in})e^{-Dt} - x_1 - x_2,\, x_2) - D\bigr)x_1, \\[1ex] 
\dot{x}_2 &= \bigl(f_2(S_{in} + (z(0)-S_{in})e^{-Dt} - x_1 - x_2,\, x_1) - D\bigr)x_2.
\end{aligned}
\right.
\end{equation}
System \cref{ModelDiEgaux_Asym} is asymptotically autonomous, converging as $t \to +\infty$ to the autonomous system
\begin{equation}                        \label{ModelDiEqualModRed}
\left\{
\begin{aligned}
\dot{x}_1 &= \bigl(f_1(S_{in}-x_1-x_2,\,x_2)-D\bigr)x_1, \\[1ex] 
\dot{x}_2 &= \bigl(f_2(S_{in}-x_1-x_2,\,x_1)-D\bigr)x_2.
\end{aligned}
\right.
\end{equation}
By applying the results of Thieme \cite{ThiemeJMB1992}, which were used in a manner similar to \cite{ElhajjiIJB2018,ElhajjiJBD2009}, it follows that the asymptotic behavior of the full three-dimensional system \cref{ModelDiEgaux_Varz} coincides with that of the two-dimensional reduced system \cref{ModelDiEqualModRed}.
\begin{proposition} 
The washout equilibrium $\mathcal{E}_0$ of system \cref{ModelDiEgaux} is globally asymptotically stable whenever it is the unique equilibrium.
\end{proposition}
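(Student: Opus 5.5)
The plan is to work with the reduced planar system \cref{ModelDiEqualModRed} on the invariant triangle $T=\{(x_1,x_2)\in\mathbb{R}_+^2 : x_1+x_2\le S_{in}\}$. We first show that the origin attracts every trajectory of this planar system whenever $\gamma_1$ and $\gamma_2$ do not intersect, that is, when $\mathcal{E}_0$ is the unique equilibrium. We then lift this to \cref{ModelDiEgaux} through Thieme's theory of asymptotically autonomous systems \cite{ThiemeJMB1992}, as already invoked above.

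\textbf{Planar analysis.} For $D_i=D$ the curves $\gamma_1$ and $\gamma_2$ are the nontrivial nullclines $\dot x_1=0$ (with $x_1>0$) and $\dot x_2=0$ (with $x_2>0$). By \cref{hyp1,hyp2}, $f_1(S_{in}-x_1-x_2,x_2)$ is decreasing in $x_1$. Hence $\dot x_1>0$ exactly in the region bounded by $\gamma_1$, namely $\{x_2\in(x_2^1,x_2^2),\ 0<x_1<F_1(x_2)\}$, and $\dot x_1<0$ elsewhere in $T$; if \cref{CondExis_xji} fails, $\dot x_1<0$ everywhere. The analogous statement holds for $x_2$ with $\gamma_2$. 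We denote these lobes by $R_1$ (around $\gamma_1$) and $R_2$ (around $\gamma_2$). Since the curves do not meet, the lobes are disjoint: otherwise a point in the intersection would force $\gamma_1$ and $\gamma_2$ to cross, because both are graphs joining their endpoints on the axes. This gives three regions: $R_1$, where $\dot x_1>0$ and $\dot x_2<0$; $R_2$, where $\dot x_1<0$ and $\dot x_2>0$; and the complement $R_0$, where both derivatives are negative.

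\textbf{No periodic orbits.} There are no interior equilibria, so by Poincaré--Bendixson the $\omega$-limit set of any interior trajectory in the compact invariant set $T$ either contains the origin or is a periodic orbit. A periodic orbit must surround an equilibrium, but the only equilibrium lies on the boundary, which is invariant. Therefore no periodic orbit exists, and the limit set contains the origin. The origin is a hyperbolic sink (eigenvalues $-D$), so every trajectory entering a neighbourhood of it converges to it. Consequently every trajectory tends to $(0,0)$.

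Alternatively, one can argue with monotonicity directly. In $R_1$, $x_2$ decreases while $x_1$ increases. The trajectory cannot remain in $R_1$ forever, since it would then converge to an equilibrium in $\overline{R_1}$, and the only candidates are the points of $\gamma_1$, which are not equilibria because $\dot x_2<0$ there. Once the trajectory leaves $R_1$ through $\gamma_1$ it enters $R_0$. It cannot reach $R_2$ while travelling backward around $R_1$, and monotonicity shows it can only exit $R_0$ back into the lobes in a controlled way. This version needs more bookkeeping; the Poincaré--Bendixson argument is cleaner, so I would use it.

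\textbf{Lifting to the full system.} Every $\omega$-limit set of \cref{ModelDiEgaux_Asym} is nonempty, compact and internally chain transitive for the limit system \cref{ModelDiEqualModRed}. The limit system's only chain-recurrent set is the origin: there are no cycles of equilibria, and the boundary equilibrium is a sink. By \cite{ThiemeJMB1992}, every solution of the full system converges to $(0,0)$, so $(S,x_1,x_2)\to(S_{in},0,0)$. Together with \cref{PropStabE0}, this gives global asymptotic stability.

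The main obstacle is the Thieme step: verifying that the origin is the only chain-recurrent set of the limit system, in particular excluding a connected chain through the boundary. This holds because the boundary dynamics on the axes are $\dot x_i=-Dx_i$, which flow straight into the origin. A secondary point that needs care is making the disjoint-lobe and periodic-orbit exclusion rigorous when \cref{CondExis_xji} fails for one or both species; in that case a lobe is simply empty and the argument simplifies.
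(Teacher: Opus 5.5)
Your proof is correct, but its key planar step takes a different route from the paper. The paper argues by monotonicity alone: the isoclines cut $\mathring{M}$ into regions I, II, III with fixed sign patterns, trajectories leave I and III into II, and in II both coordinates decrease, which forces convergence to an equilibrium in $\overline{II}$, i.e.\ to $E_0$.

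You instead use Poincar\'e--Bendixson together with index theory. A closed orbit would lie in the open triangle, which is convex, so the disc it bounds would contain an interior equilibrium, and there is none. The origin, being a hyperbolic sink, admits no homoclinic loops, so every $\omega$-limit set is the origin. This needs only the absence of interior equilibria. Neither \cref{hyp4} nor the shape of $\gamma_1,\gamma_2$ matters, so your lobe decomposition and the alternative monotone sketch are superfluous.

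Your route also settles a point the paper leaves implicit. A trajectory in II can re-enter I (resp.\ III) across the part of $\gamma_2$ (resp.\ $\gamma_1$) where $F_2$ (resp.\ $F_1$) decreases, because there the horizontal (resp.\ vertical) flow on the isocline points into the lobe. So ``once in II'' does not mean ``forever in II''. Ruling out infinitely many excursions needs extra bookkeeping (e.g.\ unimodality of $F_i$ and disjointness of the lobes), which your argument avoids. The paper's version, in return, is elementary and tied to the isocline picture used throughout.

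In the lifting step, your reason for the chain-recurrent set being the origin (``no cycles of equilibria, boundary sink'') is looser than needed. The right reason is that the origin attracts the whole compact positively invariant triangle for \cref{ModelDiEqualModRed}. Alternatively, apply Thieme's Markus-type theorem directly: the $\omega$-limit set of a solution of \cref{ModelDiEgaux_Asym} lies in the triangle, is invariant for the limit system, and contains a point whose limit orbit tends to the locally asymptotically stable origin.
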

\begin{figure}[!ht]
\setlength{\unitlength}{1.0cm}
\begin{center}
\begin{picture}(6.5,5)(0,0)
\put(0,0){\rotatebox{0}{\includegraphics[width=8cm,height=8cm]{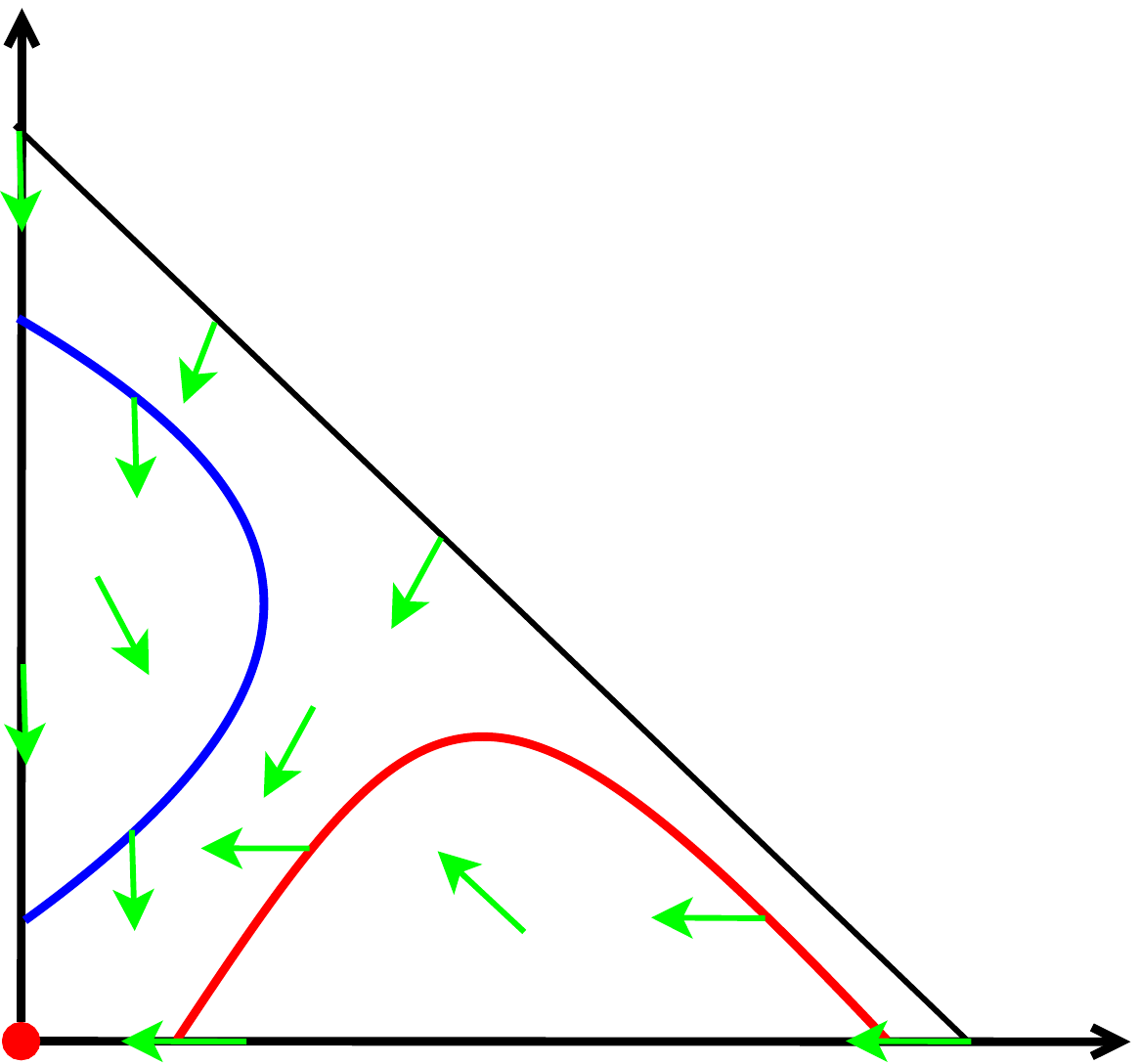}}}
\put(0.2,5){{\sc  $x_2$}}
\put(1.65,2.5){{\sc {\color{blue}$\gamma_1$}}}
\put(2.5,1.55){{\sc {\color{red}$\gamma_2$}}}
\put(0.1,-0.25){{\sc  {\color{red}$E_0$}}}
\put(2.6,2.8){{\sc $\delta$}}
\put(2.4,0.6){{\sc I}}
\put(2,2){{\sc II}}
\put(1.1,2.1){{\sc III}}
\put(7.5,0.2){{\sc  $x_1$}}
\end{picture}
\end{center}
\caption{Phase-plane representation of the reduced system \cref{ModelDiEqualModRed} illustrating the global stability of the washout equilibrium $E_0$, in the case where the nullclines $\gamma_1$ and $\gamma_2$ do not intersect.}\label{FigIsoE0GAS}
\end{figure} 
\begin{proof}
To establish global asymptotic stability, we perform a phase-plane analysis, following the approach of \cite[Proposition 7]{FekihMB2017} and \cite[Section 2.1.2.3]{HarmandBook2017}. For completeness, the details are given below.

The washout equilibrium $\mathcal{E}_0=(S_{in},0,0)$ of system \cref{ModelDiEgaux} corresponds to $E_0=(0,0)$ in the reduced system \cref{ModelDiEqualModRed}.  
When $E_0$ is the unique equilibrium, the nullclines $\gamma_1$ and $\gamma_2$ do not intersect (see \cref{FigIsoE0GAS}). They partition the interior of the set $M$ (defined in \cref{SetM} under $D_i = D$) into three closed and bounded regions, within which the reduced system \cref{ModelDiEqualModRed} is monotone. These regions are characterized by the signs of the derivatives as follows:
$$
I:  ~ \dot{x}_1<0, ~ \dot{x}_2>0, \quad
II: ~ \dot{x}_1<0, ~ \dot{x}_2<0, \quad
III: ~ \dot{x}_1>0, ~ \dot{x}_2<0.
$$
Consider a trajectory $(x_1(t),x_2(t))$ with initial condition $(x_1(0),x_2(0)) \in M$.  
Any trajectory entering regions $I$ or $III$ must leaves these regions and enters region $II$.  
Once in region $II$, both $x_1(t)$ and $x_2(t)$ are decreasing, which guarantees the existence of the limits
$$ 
\lim_{t\to+\infty} x_1(t)=x_{1\infty}, \quad \lim_{t\to+\infty} x_2(t)=x_{2\infty}.
$$
Hence, the limiting point $(x_{1\infty},x_{2\infty})$ is an equilibrium of the reduced system \cref{ModelDiEqualModRed} and necessarily lies in the closure $\overline{II}$.  
Since $E_0$ is the only equilibrium in $\overline{II}$, it follows that
$$
E_0=(x_{1\infty},x_{2\infty})=(0,0).
$$
Therefore, every trajectory converges to $E_0$, proving its global asymptotic stability for the reduced system \cref{ModelDiEqualModRed}.  
By applying Thieme’s theorem \cite{ThiemeJMB1992}, as in \cite{ElhajjiIJB2018,ElhajjiJBD2009}, this result extends to the full three-dimensional system \cref{ModelDiEgaux}, establishing the global asymptotic stability of $\mathcal{E}_0$.
\end{proof}
In the following, we explore how the system dynamics respond to variations in one or more control parameters, focusing on their impact on equilibrium states and possible bifurcations.
\section{Operating diagrams}                            \label{Sec-OD}
Operating diagrams offer a clear and concise way to visualize and summarize the asymptotic behavior of the system as a function of key operating parameters, such as the dilution rate and the input substrate concentration, which are the most readily adjustable in chemostat experiments \cite{HarmandBook2017,SmithBook1995}.
While dilution rate and input substrate concentration are the main control parameters, other biological parameters—such as microbial attachment and detachment rates or interaction strengths—can also be experimentally manipulated, allowing biologists to influence the system dynamics in a controlled manner \cite{FekihJBS2025}.

In the literature, operating diagrams can be investigated using either numerical or theoretical approaches.
From a numerical standpoint, two main strategies are commonly employed. The first consists in solving the algebraic equations defining equilibria with nonnegative components. For each biologically relevant equilibrium, the roots of the associated characteristic polynomial are then computed in order to assess local stability, typically using MATLAB or other scientific computing platforms. This approach is highly flexible and can accommodate complex models involving many state variables and parameters.

The second numerical strategy relies on continuation and correction algorithms to track the boundaries separating the regions of the operating diagram. Dedicated software packages such as \textsc{MatCont}, CONTENT, AUTO, or XPPAUT have been developed for this purpose. Their main advantage lies in their ability to detect and characterize subtle bifurcations, including limit points of cycles, cusp, Bogdanov--Takens, or Bautin bifurcations.

Alternatively, operating diagrams can be studied through a theoretical approach, based on the analytical characterization of the boundaries between the different regions by deriving conditions for the existence and stability of equilibria as functions of the operating parameters \cite{AbdellatifMBE2016,DaoudMMNP2018,DellalDCDSB2021,DellalDCDSB2022,FekihSIADS2021,FekihMB2017,FekihJBS2025,HarmandBook2017,HmidhiBMB2025,MtarIJB2021,MtarDcdsB2022,NouaouraJMB2022,SariProcesses2022,Sari2024AIMS,SariNonLinDyn2021,SARISIADS2026,SariMB2017}. To illustrate such diagrams, specific growth functions are usually selected by fixing all biological parameters. 
Despite being referred to as a theoretical approach, this methodology typically requires symbolic or numerical computations using \textsc{MAPLE} or MATLAB to construct the operating diagram boundaries.

A major advantage of this approach is that this analytical framework ensures that all dynamic regions are identified and that no qualitative behavior is omitted. For example, Sari et al. \cite{NouaouraSiap2021,NouaouraDcDsB2021,NouaouraJMB2022,SariMB2017} showed that regions of coexistence associated with a stable limit cycle were not detected in a numerical operating diagram of a model describing the anaerobic mineralization of chlorophenol in a three-step microbial food web \cite{WadeJTB2016}. This highlights a well-known limitation of purely numerical approaches, which may fail to capture certain dynamical regimes due to discretization, continuation paths, or initialization choices.

However, the main limitation of this theoretical framework appears in complex models with many state variables, where deriving the boundary curves analytically may become intractable. For instance, Hmidhi et al. \cite{HmidhiBMB2025} analyzed an eight-dimensional model and identified up to 70 distinct dynamic regions in the operating diagrams.

In the following, we analyze the operating diagrams of system \cref{ModelMutualism} with respect to the main operating parameters, namely the substrate concentration in the feed, $S_{in}$, and the dilution rate, $D$. To this end, we consider the specific growth functions \cref{SpeciFunc}, which satisfy \cref{hyp1,hyp2,hyp3,hyp4}. The same construction procedure applies to any other growth functions that meet these general assumptions, ensuring that the qualitative structure of the operating diagrams is preserved.
\subsection{Case without mortality}              \label{SubSec-SanMort}
We start with the simplest scenario, in which mortality is neglected, i.e., $D_1 = D_2 = D$. 
In this setting, we construct one- and two-parameter bifurcation diagrams for model \cref{ModelDiEgaux}, extending the analysis beyond what was previously considered in \cite{ElhajjiIJB2018,ElhajjiJBD2009}. 
To this end, we employ the specific growth functions defined in \cref{SpeciFunc}, together with the biological parameter values listed in \cref{Tab-Allpar}, setting $a_1 = a_2 = 0$.
\begin{proposition}
For the specific growth functions $f_1$ and $f_2$ defined in \cref{SpeciFunc}, and for the set of biological parameter values listed in \cref{Tab-Allpar}, the existence and local stability properties of the equilibria $\mathcal{E}_0$, $\mathcal{E}_1^*$, and $\mathcal{E}_2^*$ of system \cref{ModelDiEgaux} are completely characterized within the regions $\mathcal{J}_0$ and $\mathcal{J}_1$ of the operating diagram displayed in \cref{FigDOSSMort}(a), as summarized in \cref{Tab-ExiStabDO}. 
The corresponding one-parameter bifurcation diagram with respect to $S_{in}$ for $D=0.2$ is shown in \cref{FigDOSSMort}(b).
\end{proposition}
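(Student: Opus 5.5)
The proof specializes \cref{PropExisStab} and \cref{PropExEet} to the growth functions \cref{SpeciFunc} with $D_1=D_2=D$. The plan is to describe the boundary between $\mathcal{J}_0$ and $\mathcal{J}_1$ as the saddle--node curve in the $(S_{in},D)$ plane where $\gamma_1$ and $\gamma_2$ become tangent. We then check the existence and stability of $\mathcal{E}_0$, $\mathcal{E}_1^*$ and $\mathcal{E}_2^*$ on each side of that curve.

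\textbf{Step 1: reduce to one unknown.} For $D_1=D_2=D$ and $f_i$ of Michaelis--Menten$\times$mutualism form, the equation $f_1(S_{in}-x_1-x_2,x_2)=D$ can be solved explicitly for $x_1$. With $s=S_{in}-x_1-x_2$ it gives
\[
s=\frac{D K_1(L_1+x_2)}{m_1x_2-D(L_1+x_2)},
\]
and hence $F_1(x_2)=S_{in}-x_2-s(x_2)$; $F_2$ is obtained in the same way. Positive equilibria are then the roots of
\[
s_1(x_2)=s_2(x_1),\qquad x_1+x_2+s=S_{in},
\]
which after clearing denominators becomes a polynomial equation in a single unknown. \cref{AppendixA} shows that each $F_i$ is unimodal, and in these variables the tangency condition $F_1'(x_2^*)F_2'(x_1^*)=1$ is algebraic. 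Adding it to the equilibrium equations and eliminating $(x_1,x_2)$ numerically (with MAPLE, or via the LP continuation in \textsc{MatCont}) yields the curve $S_{in}=\sigma(D)$. This curve separates $\mathcal{J}_0$, where there is no intersection, from $\mathcal{J}_1$, where there are two transversal intersections.

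\textbf{Step 2: check the number of intersections.} Under \cref{hyp4} and this unimodality, the two graphs meet in at most two points. This is the step I expect to be the main obstacle, because the analysis must exclude four or more intersections for the parameters in \cref{Tab-Allpar}. I would first try the monotone structure $s_1'(x_2)<0$ and $s_2'(x_1)<0$ to reduce the problem to a one-variable function that changes monotonicity at most once. Failing that, I would verify the count numerically on a grid of $(S_{in},D)$.

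\textbf{Step 3: assign stability in each region.} In $\mathcal{J}_0$, the washout equilibrium $\mathcal{E}_0$ is the only equilibrium, so it is globally asymptotically stable by the preceding proposition. In $\mathcal{J}_1$, \cref{PropStabE0} gives $\mathcal{E}_0$ LES. At the lower intersection $\mathcal{E}_1^*$, the tangent to $\gamma_1$ lies below that of $\gamma_2$, so $F_1'F_2'<1$ and $\mathcal{E}_1^*$ is LES by \cref{PropExisStab}, because $c_4>0$ when $D_i=D$. At $\mathcal{E}_2^*$ the order of the tangents reverses, so $c_3<0$ and $\mathcal{E}_2^*$ is a saddle. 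This produces the bistability entry of \cref{Tab-ExiStabDO}.

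\textbf{Step 4: the one-parameter diagram.} Setting $D=0.2$, $\mathcal{E}_1^*$ and $\mathcal{E}_2^*$ are born at $S_{in}=\sigma(0.2)$ through a limit point, and no Hopf point can occur. This gives \cref{FigDOSSMort}(b).
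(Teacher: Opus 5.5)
Your Steps 1, 3 and 4 follow the paper's proof. $\Gamma_{LP}$ is the tangency locus $F_1'F_2'=1$ (equivalently $c_3=0$), crossing it creates one LES and one unstable positive equilibrium, and the curve itself (with $\sigma\approx 0.1687$ at $D=0.2$) is computed numerically with \textsc{MatCont}; the paper gives no argument for your Step 2 and simply relies on the continuation. To close that step rigorously, parametrize by $S$ instead of $x_2$: for $S>\max_i DK_i/(m_i-D)$ the functions $x_2(S)=\frac{DL_1(K_1+S)}{(m_1-D)S-DK_1}$ and $x_1(S)=\frac{DL_2(K_2+S)}{(m_2-D)S-DK_2}$ are strictly decreasing and strictly convex, so $h(S):=S+x_1(S)+x_2(S)=S_{in}$ has at most two roots, and $c_3=-D\,G\,H\,h'(S^*)\,x_1^*x_2^*$ shows the LES equilibrium is the smaller root, which has the larger biomasses and so is the intersection farther from the origin in the $(x_1,x_2)$-plane (your ``lower intersection'' is correct only as a statement about the $S$-axis of \cref{FigDOSSMort}(b)).
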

\begin{table}[ht]
\caption{Existence and local stability of the equilibria of \cref{ModelDiEgaux} in the regions of the operating diagram in \cref{FigDOSSMort}(a). The symbol S (resp. U) indicates a LES (resp. unstable) equilibrium, while a blank entry indicates nonexistence.}\label{Tab-ExiStabDO}
\begin{center}
\begin{tabular}{ @{\hspace{1mm}}l@{\hspace{2mm}} @{\hspace{2mm}}l@{\hspace{2mm}}  @{\hspace{2mm}}l@{\hspace{2mm}}   @{\hspace{2mm}}l@{\hspace{2mm}}  @{\hspace{1mm}}l@{\hspace{1mm}} }
Region            & $\mathcal{E}_0$ & $\mathcal{E}_1^*$ & $\mathcal{E}_2^*$ \\ \hline 
$\mathcal{J}_0$   &     S           &                   &                    \\  
$\mathcal{J}_1$   &     S           &      S            &      U              \\
\end{tabular}
\end{center}
\end{table}
\begin{figure}[!ht]
\setlength{\unitlength}{1.0cm}
\begin{center}
\begin{picture}(6.3,7)(0,0)
\put(-5.5,-4){\rotatebox{0}{\includegraphics[width=9.5cm,height=17cm]{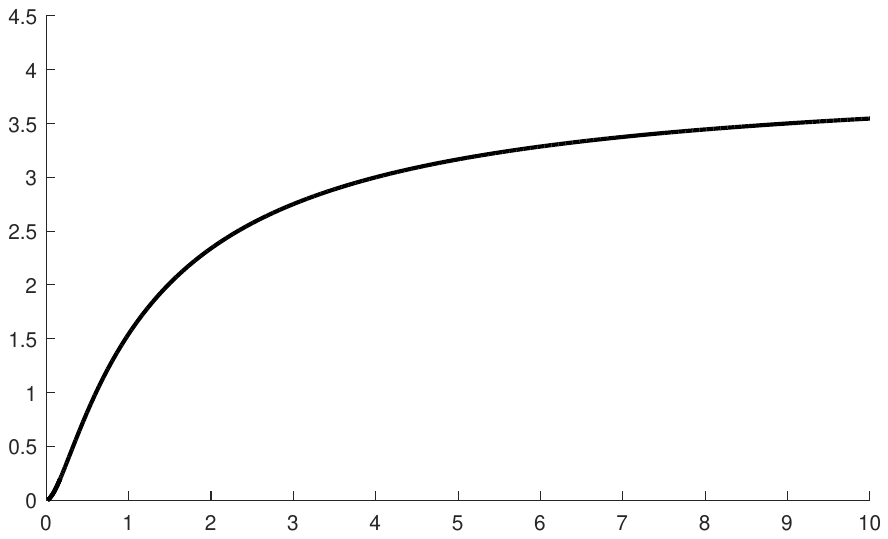}}}
\put(1.1,-3){\rotatebox{0}{\includegraphics[width=11cm,height=15cm]{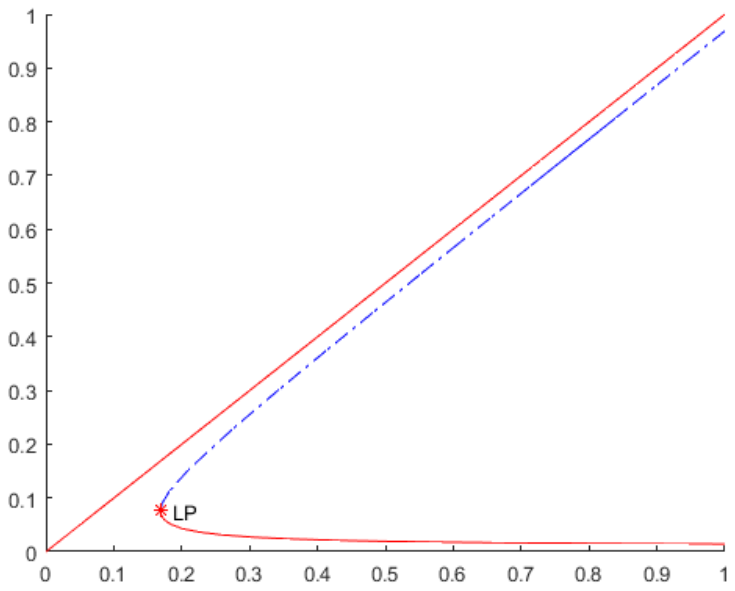}}}
\put(-0.5,7){{\sc (a)}}
\put(-3.7,6.85){{\sc $D$}}
\put(2.65,2.25){{\sc $S_{in}$}}
\put(-2,6.3){{\sc $\mathcal{J}_0$}}
\put(-0.8,4){{\sc $\mathcal{J}_1$}}
\put(2.65,5.9){{\sc $\Gamma_\sLP$}}
\put(6.6,7){{\sc (b)}}
\put(3.8,6.85){{\sc $S$}}
\put(9.8,2.25){{\sc $S_{in}$}}
\put(7.1,5.2){{\sc ${\color{red}\mathcal{E}_0}$}}
\put(7.1,2.55){{\sc ${\color{red}\mathcal{E}_1^*}$}}
\put(7.1,4.4){{\sc ${\color{blue}\mathcal{E}_2^*}$}}
\put(4.68,2.25){{\sc ${\color{red}\dagger}$}}
\put(4.55,2.05){{\sc $\sigma$}}
\end{picture}
\end{center}
\vspace{-2.2cm}
\caption{(a) Operating diagram of \cref{ModelDiEgaux} obtained with \textsc{MatCont}. (b) Corresponding one-parameter bifurcation diagram of the substrate concentration $S$  as a function of $S_{in}$ for $D=0.2$.}\label{FigDOSSMort}
\end{figure}
\begin{proof}
Positive equilibria of \cref{ModelDiEgaux} correspond to the intersection points of the nullclines $x_2 = F_1(x_1)$ and $x_1 = F_2(x_2),$
in the $(x_1,x_2)$-plane. 
A saddle–node bifurcation occurs when these two nullclines are tangent to each other. 
This tangency condition is characterized by
\begin{equation}                                   \label{Eqs3SNProof}
x_2 = F_1(x_1), \quad x_1 = F_2(x_2), \quad \text{and} \quad  F_1'(x_1)F_2'(x_2)=1,
\end{equation}
which is equivalent to the condition $c_3(x_1,x_2)=0$, according to the definition of $c_3$ given by \cref{Expc3}.
System \eqref{Eqs3SNProof} consists of three equations with four unknowns $(x_1,x_2,S_{in},D)$, and generically defines a curve 
$$
\Gamma_\sLP = \left\{(S_{in},D): \text{system \eqref{Eqs3SNProof} holds} \right\}
$$
in the $(S_{in},D)$-plane.
Crossing this curve in parameter space leads to the creation or annihilation of a pair of positive equilibria through a saddle–node bifurcation.
The one-parameter bifurcation diagram shown in \cref{FigDOSSMort}(b) illustrates the dynamical transition occurring when the operating point crosses the curve $\Gamma_\sLP$. For a fixed dilution rate $D=0.2$, increasing the input substrate concentration $S_{in}$ from zero yields a saddle–node bifurcation at the critical value $S_{in}=\sigma \approx 0.1687$. At this threshold, a pair of positive coexistence equilibria emerges, consisting of a LES equilibrium $\mathcal{E}_1^*$ and an unstable equilibrium $\mathcal{E}_2^*$.
\end{proof}
\begin{remark}
The theoretical construction of the operating diagram relies on the computation of the curve $\Gamma_\sLP$, which can in principle be obtained using symbolic computation tools such as \textsc{Maple}. This amounts to solving the system of three equations \cref{Eqs3SNProof}, involving the state variables $x_1$, $x_2$, and the operating parameters $S_{in}$ and $D$. Since this system involves three equations with four unknowns $(x_1, x_2, S_{in}, D)$, one can employ a parameter-driven iterative approach, using the solution from the previous step as an initial guess for \texttt{fsolve}, which effectively reduces the number of unknowns and facilitates computation \cite[Section 4]{FekihJMAA2025}.
In the present study, however, the operating diagrams are constructed using the numerical continuation package \textsc{MatCont} \cite{MATCONT2023}. This approach allows us to systematically trace the relevant bifurcation curves and provides a detailed characterization of the operating diagram.
\end{remark}

The operating diagram displayed in \cref{FigDOSSMort}(a) is partitioned into two regions separated by the curve $\Gamma_\sLP$. 
In region $\mathcal{J}_0$, no positive equilibrium exists; the washout equilibrium $\mathcal{E}_0$ is the unique equilibrium and is LES. 
In contrast, region $\mathcal{J}_1$ corresponds to a bistable regime where two additional equilibria appear: a LES coexistence equilibrium $\mathcal{E}_1^*$ and an unstable equilibrium $\mathcal{E}_2^*$. 
\subsection{Case with mortality}                 \label{SubSec-AvMort}
We now investigate the operating diagram of system \cref{ModelMutualism} when species mortality is included, that is, for $a_i \neq 0$, $i=1,2$. 
The analysis is carried out for the specific growth functions defined in \cref{SpeciFunc} and for the biological parameter values listed in \cref{Tab-Allpar}, with mortality rates fixed at $a_1=0.8$ and $a_2=1.5$.
The associated two-parameter bifurcation diagram in the $(S_{in},D)$-plane is computed numerically using the continuation package \textsc{MatCont} \cite{MATCONT2023}, which allows us to trace the bifurcation curves of equilibria and periodic solutions and to determine the qualitative dynamics in each region of the diagram. 
The resulting classification is summarized below.
\begin{proposition}
For the specific growth functions $f_1$ and $f_2$ defined in \cref{SpeciFunc}, and for the biological parameter values listed in \cref{Tab-Allpar} with mortality rates $a_1=0.8$ and $a_2=1.5$, the existence and local stability properties of the equilibria $\mathcal{E}_0$, $\mathcal{E}_1^*$, $\mathcal{E}_2^*$, and of the limit cycles $C_1$, $C_2$, $C_3$ of system \cref{ModelMutualism} are completely characterized within the regions $\mathcal{J}_0$, $\mathcal{J}_1$, and $\mathcal{J}_2$ of the operating diagram shown in \cref{FigDO}. 
The bifurcation curves separating these regions are listed in \cref{TabCurvBif}, 
the regional classification is detailed in \cref{Tab-RegionOD}, 
while the critical parameter values and associated normal form coefficients 
of the codimension-two bifurcations are summarized in \cref{Tab-VPDOMatc1}.
\end{proposition}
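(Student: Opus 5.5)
This proposition is essentially a numerical-classification statement. I will establish it the same way as the mortality-free proposition: combine the analytic existence and stability criteria already proved with numerical continuation in \textsc{MatCont}.

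The first step fixes the parameters of \cref{Tab-Allpar} with $a_1=0.8$, $a_2=1.5$, so that $D_i=\alpha_i D+a_i$. For each operating point $(S_{in},D)$, \cref{PropExEet} reduces the positive equilibria to the intersections of $\gamma_1$ and $\gamma_2$. \cref{PropStabE0} gives that $\mathcal{E}_0$ is always LES. \cref{PropStabEe} shows that the stability of $\mathcal{E}_i^*$ is governed by the signs of $c_3$ and $c_4$ from \cref{Expc3,Expc4}. The boundaries for equilibria are therefore:
\begin{itemize}
\item the saddle--node curve $\Gamma_\sLP$, defined as in \eqref{Eqs3SNProof} by $c_3=0$ together with the equilibrium equations, but now with $D_i\neq D$;
\item the Hopf curve, defined by $c_4=0$ with $c_3>0$, where the transversality and nonresonance conditions are checked numerically exactly as in \Cref{SectionHopfBif}.
\end{itemize}

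The second step is to continue these curves in the $(S_{in},D)$-plane with \textsc{MatCont}, starting from the one-parameter points already located at $D=0.2$ (for instance $\sigma_1$ and $\sigma_6$). Along the Hopf curve, I record the sign of the first Lyapunov coefficient. Its zeros give the generalized Hopf (GH) points, and the endpoint where the Hopf curve meets $\Gamma_\sLP$ gives the Bogdanov--Takens (BT) point. From the GH point I start the LPC curve of periodic orbits. From the BT point I start the homoclinic curve, and along the cycle branches I detect the period-doubling, CPC, R1 and R2 points. Each codimension-two point is certified by the normal form coefficients that \textsc{MatCont} computes; these values fill \cref{Tab-VPDOMatc1}. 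The collection of curves is \cref{TabCurvBif}, and their complement splits the plane into $\mathcal{J}_0$, $\mathcal{J}_1$, $\mathcal{J}_2$.

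The third step fixes one representative $(S_{in},D)$ in each region. There I compute the equilibria, the roots of $P(\lambda)$, and the limit cycles $C_1,C_2,C_3$ together with their Floquet multipliers, and fill \cref{Tab-RegionOD}. I then argue that the qualitative picture is constant within each region: crossing no bifurcation curve means no equilibrium or cycle is created, destroyed, or changes stability, by the implicit function theorem for hyperbolic equilibria and hyperbolic cycles.

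The main obstacle is the claim that the characterization is \emph{complete}. Numerical continuation can miss branches of cycles that are not connected to a detected Hopf, GH or BT point, a limitation the paper itself notes when discussing \cite{WadeJTB2016}. For equilibria, completeness is rigorous, since existence and stability reduce to the sign conditions on $c_3$ and $c_4$. For periodic orbits, I would add a dense grid of direct simulations from many initial conditions in $\Omega$ within each region, to check that no further attractors appear. The claim about cycles therefore remains numerical evidence, not a proof.
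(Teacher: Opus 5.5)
Your proposal follows the paper's own argument, which is likewise a \textsc{MatCont} continuation narrative: $\mathcal{E}_0$ is always LES, $\Gamma_{LP}$ and $\Gamma_H$ are continued in the $(S_{in},D)$-plane with BT at their meeting point, GH lies on $\Gamma_H$ with $\Gamma_{LPC}$ emanating from it, and $\Gamma_{PD}$, CPC, \texttt{R1}, \texttt{R2} are detected along the cycle branches and certified by the nonvanishing normal form coefficients of \cref{Tab-VPDOMatc1}. Your additions are sound refinements rather than a different route: continuing a homoclinic curve from BT (which the paper leaves out of \cref{TabCurvBif} and only detects in the one-parameter diagrams of \cref{Sec-BD}), the hyperbolicity argument for constancy within regions, and the explicit caveat that completeness for cycles is only numerical.
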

\begin{table}[ht]
\caption{Bifurcation curves of system \cref{ModelMutualism} illustrated in \cref{FigDO}, with their corresponding colors and types.} 
\label{TabCurvBif} 
\begin{center}
\begin{tabular}{ @{\hspace{1mm}}l@{\hspace{3mm}} l @{\hspace{3mm}} l @{\hspace{1mm}} }
Curve          &  Color    &  Type of bifurcation \\ \hline
$\Gamma_{LP}$  &  Blue     &  LP: Limit point (saddle-node) \\  
$\Gamma_H$     &  Red      &  H: Hopf bifurcation \\  
$\Gamma_{LPC}$ &  Green    &  LPC: Limit point of cycles \\
$\Gamma_{PD}$ &   Cyan     &  PD: Period-Doubling \\
\end{tabular}
\end{center}
\end{table}
\begin{table}[ht]
\caption{Existence and stability of equilibria and limit cycles in the different regions of \cref{FigDO}.}
\label{Tab-RegionOD}
\begin{center}
\begin{tabular}{lcccl}
\hline
Region & $\mathcal{E}_0$ & $\mathcal{E}^*_1$ & $\mathcal{E}^*_2$ & $(C_1, C_2, C_3)$ \\
\hline
$\mathcal{J}_0$                     & S &   &   &                    \\
\hline
$\mathcal{J}_1^0$                   & S & U & U &                    \\
$\mathcal{J}_1^{C_1^u}$             & S & U & U & $(U,\cdot,\cdot)$  \\
$\mathcal{J}_1^{C_1^s}$             & S & U & U & $(S,\cdot,\cdot)$  \\
$\mathcal{J}_1^{C_{12}^{su}}$      & S & U & U & $(S,U,\cdot)$      \\
$\mathcal{J}_1^{C_{123}^{sus}}$     & S & U & U & $(S,U,S)$          \\
\hline
$\mathcal{J}_2^0$                   & S & S & U &                    \\
$\mathcal{J}_2^{C_2^u}$             & S & S & U & $(\cdot, U, \cdot)$  \\
$\mathcal{J}_2^{C_1^s}$             & S & S & U & $(S,\cdot,\cdot)$  \\
$\mathcal{J}_2^{C_{23}^{us}}$            & S & S & U & $(\cdot, U,S)$\\ 
$\mathcal{J}_2^{C_{23}^{uu}}$            & S & S & U & $(\cdot, U,U)$\\  
\hline
\end{tabular}
\end{center}
\end{table}
\begin{figure}[!ht]
\setlength{\unitlength}{1.0cm}
\begin{center}
\begin{picture}(11.5,4.8)(0,0)
\put(-4,-6){\rotatebox{0}{\includegraphics[width=11cm,height=16cm]{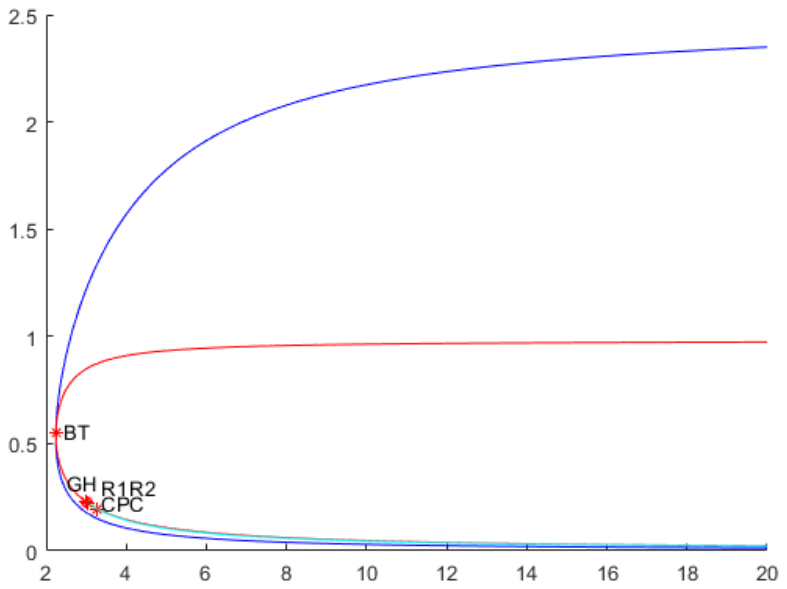}}}
\put(4,-6){\rotatebox{0}{\includegraphics[width=11cm,height=16cm]{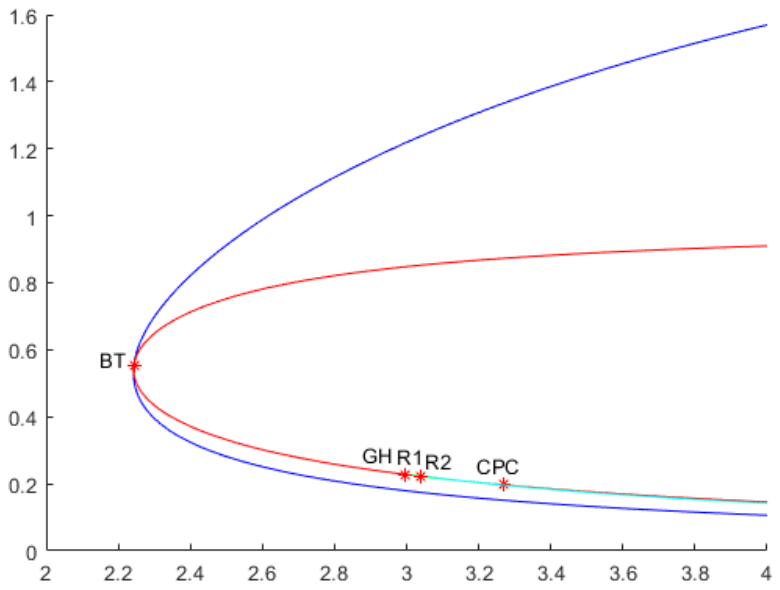}}}
\put(1.5,4.6){{\sc  $(a)$}}
\put(-1.4,4.5){{\sc  $D$}}
\put(4.9,4.2){{\sc  ${\color{blue}\Gamma_{LP}}$}}
\put(4.9,1.5){{\sc  ${\color{red}\Gamma_H}$}}
\put(-0.5,4){{\sc $\mathcal{J}_0$}}
\put(2,2.7){{\sc $\mathcal{J}_1$}}
\put(2,0.5){{\sc $\mathcal{J}_2$}}
\put(4.9,-0.35){{\sc  $S_{in}$}}
\put(9.5,4.55){{\sc $(b)$}}
\put(6.5,4.6){{\sc $D$}}
\put(8,4){{\sc $\mathcal{J}_0$}}
\put(10.6,3){{\sc $\mathcal{J}_1$}}
\put(10.6,1.5){{\sc $\mathcal{J}_2$}}
\put(12.9,4.4){{\sc  ${\color{blue}\Gamma_{LP}}$}}
\put(12.9,2.4){{\sc {\color{red}$\Gamma_H$}}}
\put(12.9,-0.35){{\sc $S_{in}$}}
\end{picture}\\
\begin{picture}(6.4,10.5)(0,0)
\put(-6,0){\rotatebox{0}{\includegraphics[width=8cm,height=14cm]{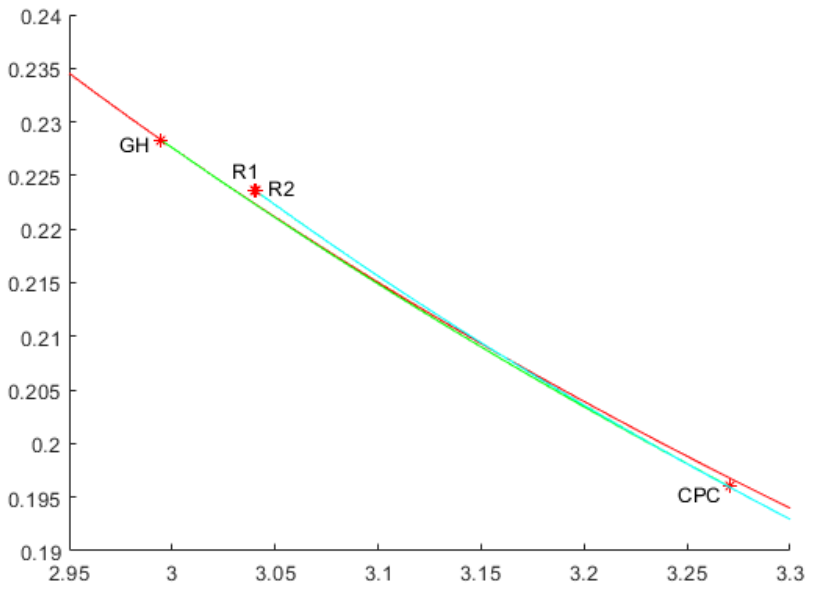}}}
\put(0,0){\rotatebox{0}{\includegraphics[width=7cm,height=14cm]{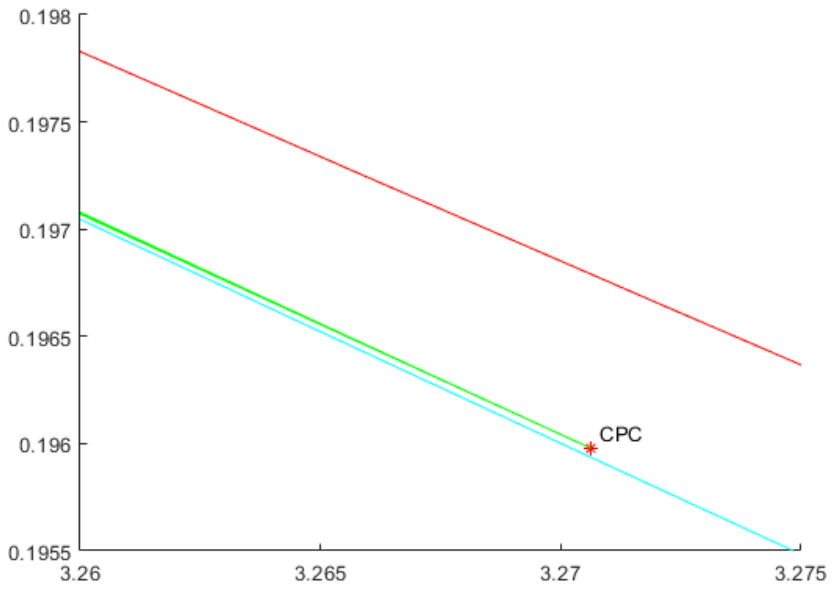}}}
\put(5,0){\rotatebox{0}{\includegraphics[width=7cm,height=14cm]{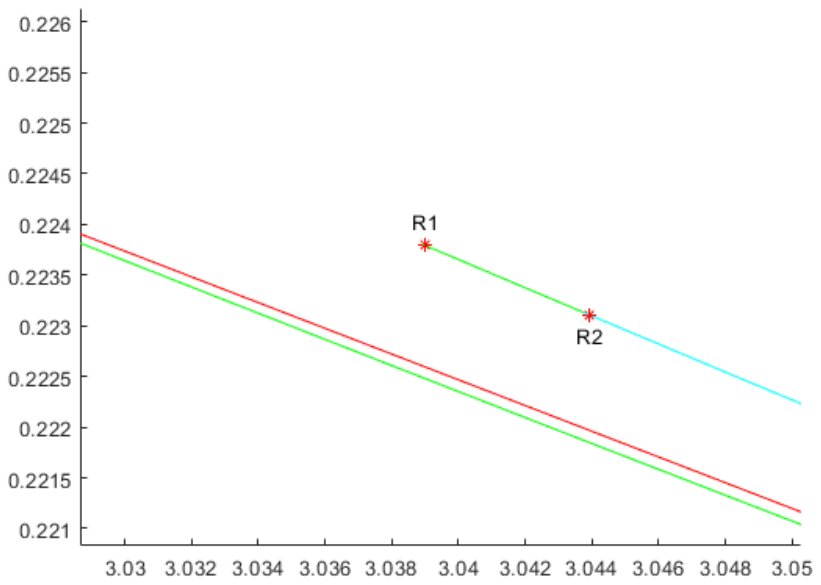}}}
\put(-2,9.2){{\sc  $(c)$}}
\put(-4.2,9.2){{\sc  $D$}}
\put(-0.3,5.8){{\sc  ${\color{red}\Gamma_H}$}}
\put(-4,7.3){{\sc  ${\color{green}\Gamma_{LPC}}$}}
\put(-3.75,7.55){\sc  {\color{green}\vector(1,1){0.4}}}
\put(-1,5.2){{\sc  ${\color{cyan}\Gamma_{PD}}$}}
\put(-0.3,5.2){\sc  {\color{cyan}\vector(1,0){0.6}}}
\put(0.5,4.95){{\sc  $S_{in}$}}
\put(3.7,9.2){{\sc  $(d)$}}
\put(1.6,9.2){{\sc  $D$}}
\put(5,6.9){{\sc  ${\color{red}\Gamma_H}$}}
\put(2,7.5){{\sc  ${\color{green}\Gamma_{LPC}}$}}
\put(5,5.4){{\sc  ${\color{cyan}\Gamma_{PD}}$}}
\put(5.65,4.95){{\sc  $S_{in}$}}
\put(8.4,9.2){{\sc  $(e)$}}
\put(6.6,9.2){{\sc  $D$}}
\put(7,7.3){{\sc  ${\color{red}\Gamma_H}$}}
\put(7,6.6){{\sc  ${\color{green}\Gamma_{LPC}}$}}
\put(8.8,7.2){{\sc  ${\color{green}\Gamma_{LPC}}$}}
\put(10,6.45){{\sc  ${\color{cyan}\Gamma_{PD}}$}}
\put(10.5,4.95){{\sc  $S_{in}$}}
\end{picture}
\end{center}
 \vspace{-4.8cm}
 \caption{\textsc{MatCont}: 
(a) Operating diagram of system \cref{ModelMutualism}. 
(b) Zoom of the region containing the BT, GH, \texttt{R1}, \texttt{R2}, and CPC bifurcation points. 
(c) Magnified view of the GH–\texttt{R1}–\texttt{R2}–CPC interaction region. 
(d) Detailed zoom around the CPC bifurcation point. 
(e) Detailed zoom around the \texttt{R1}–\texttt{R2} resonance points.}\label{FigDO}
\end{figure}
\begin{table}[ht]
\caption{Operating parameter values and associated normal form coefficients at the codimension-two bifurcation points shown in \cref{FigDO}. 
BT denotes a Bogdanov--Takens bifurcation, GH a generalized Hopf bifurcation, \texttt{R1} a $1:1$ resonance bifurcation, and CPC a cusp bifurcation of cycles.}
\label{Tab-VPDOMatc1}
\begin{center}
\begin{tabular}{lll}
\hline
Bif. & $(S_{in},D)$ & Normal form coefficients \\ 
\hline
BT          & $(2.243,\,0.550)$ & $(a,b)\approx(-0.786,\,-1.076)$ \\ 
GH          & $(2.995,\,0.228)$ & $l_2\approx-117.07$ \\    
\texttt{R1} & $(3.0396,\,0.2237)$ & $a\,b\approx6.810\times 10^{6}$ \\  
\texttt{R2} & $(3.0437,\,0.2232)$ & $(a,b)\approx(55.43,\,-1098)$ \\ 
CPC         & $(3.271,\,0.196)$ & $c \approx 27.419$ \\
\hline
\end{tabular}
\end{center}
\end{table}
\begin{proof}
Recall from \cref{Sec-AnalMod} that $\mathcal{E}_0$ always exists and is LES. 
Crossing the curve $\Gamma_{LP}$ gives rise to two positive equilibria through a saddle–node bifurcation. 
Crossing the Hopf curve $\Gamma_H$ generates a family of periodic solutions.
At the intersection of $\Gamma_{LP}$ and $\Gamma_H$, a Bogdanov–Takens (BT) bifurcation point is detected. 
Moreover, the Hopf curve contains a generalized Hopf (GH) point, where the first Lyapunov coefficient vanishes and the criticality of the Hopf bifurcation changes. 
From this GH point, a branch of limit point of cycles bifurcations $\Gamma_{LPC}$ emanates, corresponding to the creation or annihilation of a pair of periodic orbits.

In addition to these curves, the continuation of periodic orbits reveals the presence of a period-doubling curve $\Gamma_{PD}$ in the $(S_{in},D)$-plane. 
This curve corresponds to flip bifurcations of limit cycles, occurring when a Floquet multiplier crosses the unit circle at $-1$. 
Numerically, $\Gamma_{PD}$ is initially located very close to the limit point of cycles curve $\Gamma_{LPC}$ in a restricted parameter region. 
However, after the termination of $\Gamma_{LPC}$ at the CPC point, the period-doubling curve persists and bends toward the Hopf curve $\Gamma_H$, remaining entirely in the region where periodic solutions exist (see \cref{FigDO}).
Its turning point corresponds to the codimension-two resonance point \texttt{R2}, located at the maximal extension of $\Gamma_{PD}$ in the $(S_{in},D)$-plane (see \cref{FigDO}).

\textsc{MatCont} also detects other codimension-two bifurcations organizing the dynamics of periodic solutions, notably cusp bifurcations of cycles (CPC), and $1:1$ and $1:2$ resonance bifurcations (\texttt{R1} and \texttt{R2}), also shown in \cref{FigDO}. 
A CPC bifurcation corresponds to the tangential merging of two branches of limit point of cycles \cite{Kuznetsov2004}. 
A $1:1$ resonance bifurcation (\texttt{R1}) occurs when a Floquet multiplier of a periodic orbit crosses the unit circle at $+1$ with multiplicity greater than one. 
A $1:2$ resonance bifurcation (\texttt{R2}) corresponds to the presence of a double multiplier at $-1$ on the unit circle.  

The coefficients listed in \cref{Tab-VPDOMatc1} correspond to the reduced normal forms computed by \textsc{MatCont} \cite[Section~3]{KuznetsovSiam1999}. 
Their nonvanishing verifies that the detected codimension-two bifurcations satisfy the required non-degeneracy conditions and therefore admit a generic unfolding. 
Their sign determines the local orientation of the bifurcation branches in parameter space, whereas their magnitude has no qualitative significance and merely reflects the numerical scaling of the normal form (see \cite{Govaerts2000,Kuznetsov2004}).

Finally, \cref{Tab-RegionOD} provides a detailed refinement of the operating diagram, indicating the existence and stability of equilibria and limit cycles in each region of the $(S_{in},D)$-plane. 
Within the regions $\mathcal{J}_1$ and $\mathcal{J}_2$, several subregions may occur where up to three limit cycles coexist, with at most two of them being stable. 
These cycles can appear or disappear through Hopf (H), limit point of cycles (LPC), or period-doubling (PD) bifurcations. 
In addition, homoclinic (Hom) bifurcations, which are global in nature, are detected and analyzed in the subsequent subsection devoted to the detailed bifurcation structure (see \cref{Sec-BD}).
\end{proof}

Altogether, the operating diagram provides a comprehensive overview of the qualitative dynamics of system \cref{ModelMutualism} in the $(S_{in},D)$-plane, illustrating how washout, coexistence, and oscillatory regimes are organized through both local and codimension-two bifurcations. 
The detailed one-parameter bifurcation analysis in the following section will further reveal the internal organization of these regions and explicitly show how periodic solutions are structured and connected.
\section{One-parameter bifurcation diagrams and numerical simulations} \label{Sec-BD}
In this section, we complement the operating diagram analysis by investigating one-parameter bifurcation diagrams computed with \textsc{MatCont}, using the input substrate concentration $S_{in}$ as the continuation parameter for several fixed values of the dilution rate $D$. Analogous diagrams can also be obtained by taking $D$ as the continuation parameter.
Each selected value of $D$ corresponds to a horizontal slice of the operating diagram in the $(S_{in},D)$-plane, allowing a detailed examination of the system dynamics within the corresponding region.

We first consider the case $D=0.195$, for which the horizontal line lies below the cusp bifurcation of cycles (CPC) located at $(S_{in},D)\approx(3.271,\,0.196)$.  
Next, we analyze the case $D=0.2$, corresponding to a horizontal section situated between the CPC point and the $1\!:\!1$ resonance point \texttt{R1}, whose coordinates are $(S_{in},D)\approx(3.0396,\,0.2237)$.  
We also examine the case $D=0.22$, which lies between the CPC and \texttt{R1} points but differs in the ordering of bifurcations along $S_{in}$: the sequence is LPC–H–PD–LPC, instead of LPC–PD–LPC–H.  
For $D=0.224$, the horizontal line lies between the \texttt{R1} point and the generalized Hopf (GH) bifurcation at $(S_{in},D)\approx(2.995,\,0.228)$.  
Finally, the case $D=0.23$ corresponds to a line above the GH point.

These one-parameter bifurcation diagrams provide a refined description of the asymptotic behavior of the system within each region of the operating diagram. 
In particular, they reveal the precise sequence of qualitative transitions, including the emergence or disappearance of equilibria and limit cycles, as $S_{in}$ varies. This analysis clarifies the dynamical mechanisms underlying the structure of the operating diagram.
\subsection{A unique limit cycle for \texorpdfstring{$D=0.195$}{D=0.195} without an LPC bifurcation}
The following result shows that, for $D=0.195$, the system exhibits a unique branch of periodic solutions which does not undergo any Limit Point of Cycles (LPC) bifurcation.
\begin{proposition}
Fix $D=0.195$. For the specific growth functions $f_1$ and $f_2$ and the biological parameter values used in \cref{FigDO}, the one–parameter bifurcation diagram of \cref{ModelMutualism} with respect to $S_{in}$ is displayed in \cref{Fig-DBD0195}. The existence and local stability of all steady states and periodic solutions according to $S_{in}$ are summarized in \cref{Tab-DB0195}.
\end{proposition}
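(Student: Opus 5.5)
This proposition is essentially computational, so the plan is to combine the analytical results of \cref{Sec-AnalMod} with a numerical continuation in \textsc{MatCont}. We fix $D=0.195$, the growth functions \cref{SpeciFunc}, and the parameters of \cref{Tab-Allpar} with $a_1=0.8$ and $a_2=1.5$. The proof then has three steps: continue the equilibria in $S_{in}$, continue the periodic orbits born at the Hopf point, and check that the cycle branch contains no LPC point.

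\emph{Equilibria.} By \cref{PropStabE0}, $\mathcal{E}_0$ exists and is LES for every $S_{in}$. For the positive equilibria, we start from a large value of $S_{in}$, where $\gamma_1$ and $\gamma_2$ intersect, and continue $\mathcal{E}_1^*$ and $\mathcal{E}_2^*$ in $S_{in}$.
\begin{itemize}
\item By \cref{PropExEet}, both positive equilibria disappear at the LP point where $\gamma_1$ and $\gamma_2$ become tangent. This point is the intersection of the horizontal line $D=0.195$ with $\Gamma_{LP}$, and there $c_3=0$ by \cref{Expc3}.
\item Along the branch, we evaluate $c_3$ and $c_4$ from \cref{Expr-CRH} and \cref{Expc4}. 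This gives $c_3<0$ on $\mathcal{E}_2^*$, so it is always a saddle.
\item On $\mathcal{E}_1^*$ we have $c_3>0$, and $c_4$ changes sign exactly once, at the crossing with $\Gamma_H$. By \cref{PropStabEe}, $\mathcal{E}_1^*$ is unstable on the LP side of this value and LES beyond it.
\end{itemize}
To confirm a genuine Hopf bifurcation, we proceed exactly as in \cref{SectionHopfBif}. We check that the eigenvalues at the crossing are a purely imaginary pair $\pm i\nu$ with $\nu\neq0$. We also check the transversality condition $\partial\mu/\partial S_{in}\neq0$. Finally, we compute the first Lyapunov coefficient $l_1$ to decide whether the bifurcation is sub- or supercritical. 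Since $D=0.195$ lies below the CPC point and far from the GH point, we expect $l_1\neq0$ with a constant sign near the crossing.

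\emph{Periodic orbits.} Starting from the Hopf point, we continue the branch of limit cycles in $S_{in}$ by orthogonal collocation. At each step we monitor the nontrivial Floquet multipliers, using the following test functions:
\begin{itemize}
\item a multiplier crossing $+1$, which signals an LPC;
\item a multiplier crossing $-1$, which signals a PD;
\item divergence of the period, which signals the homoclinic termination of the branch.
\end{itemize}
We expect the branch to end in a homoclinic orbit to the saddle $\mathcal{E}_2^*$, since the BT point is the organizing center. Along the branch, the stability of the cycle is read off from whether all multipliers other than the trivial one lie inside the unit circle. Any PD point encountered is recorded together with the resulting change in stability. Combining these results with the equilibrium analysis fills in every row of \cref{Tab-DB0195}.

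\emph{Main obstacle.} The hardest step is certifying that the cycle branch has \emph{no} LPC point. The statement rests on this, yet $D=0.195$ lies only $0.001$ below the CPC point at $D\approx0.196$. There, the two LPC branches merge, so near-folds of the cycle branch are to be expected just above this level. To handle this, we would first check the LPC test function (the multiplier closest to $+1$) along the whole branch and verify that it keeps a constant sign. Second, we would verify monotonicity of $S_{in}$ along the arclength of the branch. Third, we would confirm this by a two-parameter continuation of $\Gamma_{LPC}$ from the CPC point, showing that the $\Gamma_{LPC}$ curve remains in $D>0.195$. The CPC normal form coefficient $c\approx27.419$ indicates that this check is locally consistent. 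In addition, we would refine the mesh (more collocation intervals) to rule out a spurious fold. Near the homoclinic end of the branch, where the period blows up, continuation becomes delicate, and we would switch to continuation in the period to confirm termination at the homoclinic orbit.
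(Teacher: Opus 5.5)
Your proposal is correct and follows essentially the same route as the paper, whose proof reads off the \textsc{MatCont} continuation: $\mathcal{E}_0$ is LES by \cref{PropStabE0}, an LP at $\sigma_1$ creates $\mathcal{E}_1^*$ and $\mathcal{E}_2^*$, and $\mathcal{E}_1^*$ undergoes a supercritical Hopf bifurcation ($l_1<0$) at $\sigma_4$. The resulting cycle $C_1$ then loses stability through a PD at $\sigma_3$ and ends in a homoclinic bifurcation at $\sigma_2$. Your additional safeguards, namely the Routh--Hurwitz sign checks via $c_3$ and $c_4$ and the explicit certification that the cycle branch has no LPC fold (for instance by continuing $\Gamma_{LPC}$ from the CPC point), go beyond the paper's proof, which asserts the absence of an LPC only implicitly through the reported diagram.
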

\begin{table}[ht]
\caption{Critical values $\sigma_i$, $i=1,\ldots,4$, of the input substrate concentration $S_{in}$ for $D=0.195$. For each value, the table lists the bifurcation type, the corresponding equilibrium coordinates $(S,x_1,x_2)$ or the period $T$ of periodic solutions, and the existence and stability of equilibria and periodic orbits (S: stable, U: unstable, blank: non-existent).}\label{Tab-DB0195}
\begin{center}
\begin{tabular}{ @{\hspace{1mm}}l@{\hspace{2mm}} @{\hspace{2mm}}l@{\hspace{2mm}}  
                 @{\hspace{2mm}}l@{\hspace{2mm}} @{\hspace{1mm}}l@{\hspace{1mm}}  
                 @{\hspace{1mm}}l@{\hspace{1mm}} @{\hspace{1mm}}l@{\hspace{1mm}} 
                 @{\hspace{1mm}}l@{\hspace{1mm}} @{\hspace{1mm}}l@{\hspace{1mm}} }
\hline
$S_{in}$ & Bif. &  $(S,x_1,x_2)$ / $T$ 
& $\mathcal{E}_0$ & $\mathcal{E}_1^*$ & $\mathcal{E}_2^*$ & $C_1$ & Notes \\ 
\hline
$(0,\sigma_1)$         
& -- 
& -- 
& S &  &  &  & \sl{$\mathcal{J}_0$}\\

$\sigma_1 \approx 2.883$ 
& LP 
& \sc{$(0.669,0.190,0.143)$} 
&  &  &  &  & \sl{$\mathcal{E}_1^*=\mathcal{E}_2^*$} \\

$(\sigma_1,\sigma_2)$         
& -- 
& -- 
& S & U & U &  & \sl{$\mathcal{J}_1^0$} \\

$\sigma_2 \approx 3.277$ 
& Hom 
& $T \to +\infty$ 
&  &  &  &  & \sl{Homoclinic} \\

$(\sigma_2,\sigma_3)$         
& -- 
& -- 
& S & U & U & U & \sl{$\mathcal{J}_1^{C_1^u}$} \\

$\sigma_3 \approx 3.280$ 
& PD 
& $T=17.77 $ 
&  &  &  &  & \sl{Period-Doubling} \\
 
$(\sigma_3,\sigma_4)$         
& -- 
& -- 
& S & U & U & S & \sl{$\mathcal{J}_1^{C_1^s}$} \\

$\sigma_4 \approx 3.289$ 
& H 
& \sc{$(0.338,0.243,0.196)$} 
&  &  &  &  & \sl{Supercritical Hopf ($l_1 \approx -0.347$)}\\
$(\sigma_4,+\infty)$         
& -- 
& -- 
& S & S & U &   & \sl{$\mathcal{J}_2^0$} \\
\hline
\end{tabular}
\end{center}
\end{table}
\begin{figure}[!ht]
\setlength{\unitlength}{1.0cm}
\begin{center}
\begin{picture}(6.5,7)(0,0)
\put(-5,0){\rotatebox{0}{\includegraphics[width=6.5cm,height=10cm]{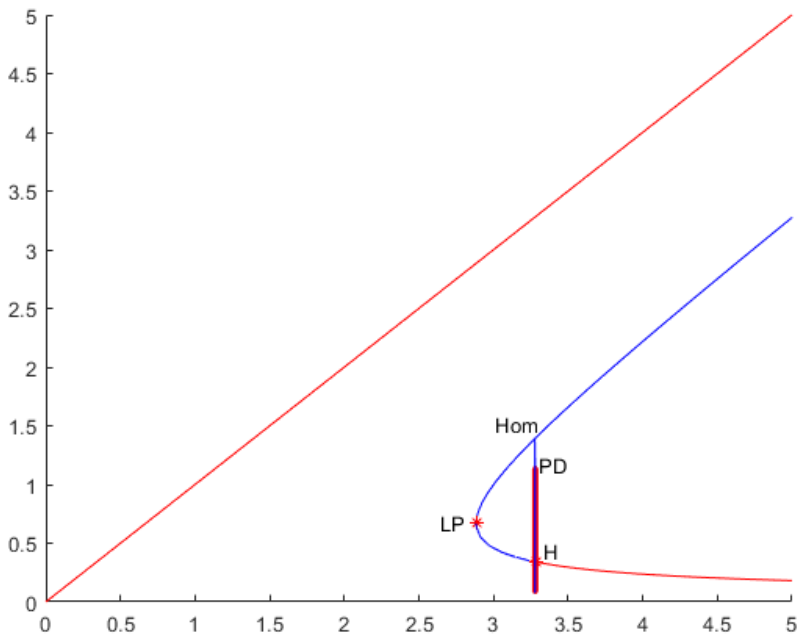}}}
\put(0,0){\rotatebox{0}{\includegraphics[width=6.5cm,height=10cm]{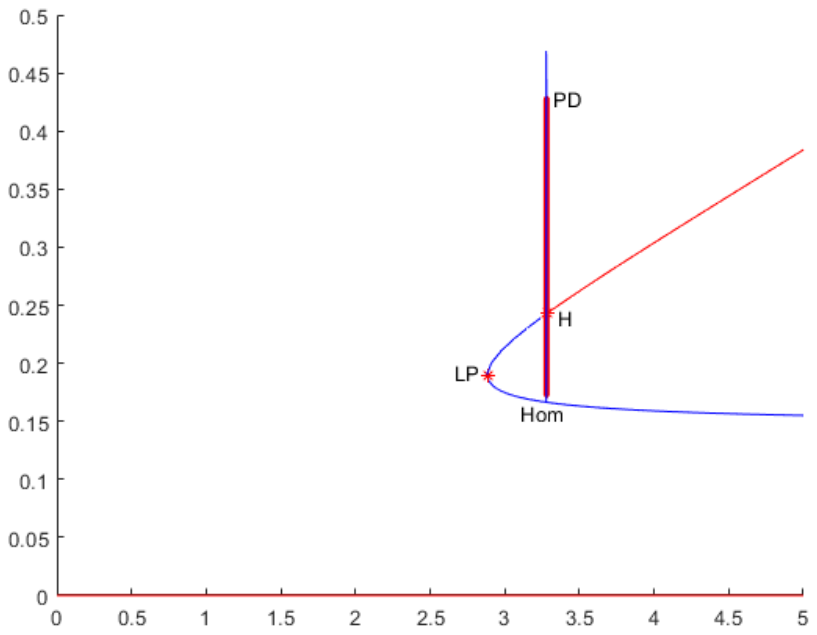}}}
\put(5,0){\rotatebox{0}{\includegraphics[width=6.5cm,height=10cm]{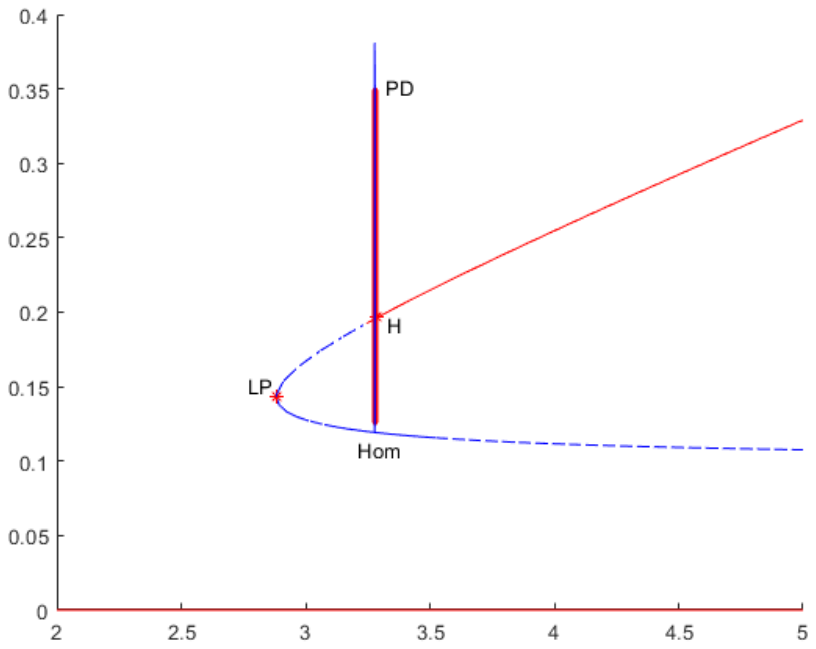}}}
\put(-1.8,6.6){{\sc $(a)$}}
\put(-3.5,6.6){{\sc $S$}}
\put(0.35,3.4){{\sc  $S_{in}$}}
\put(-1.65,5.4){{\sc {\color{red} $\mathcal{E}_0$}}}
\put(-0.3,4.85){{\sc {\color{blue} $\mathcal{E}_2^*$}}}
\put(-1.5,3.45){{\sc {\color{blue} $\mathcal{E}_1^*$}}}
\put(-0.4,3.7){{\sc {\color{red} $\mathcal{E}_1^*$}}}
\put(3.2,6.6){{\sc $(b)$}}
\put(1.5,6.6){{\sc $x_1$}}
\put(5.35,3.4){{\sc  $S_{in}$}}
\put(3.3,3.5){{\sc {\color{red} $\mathcal{E}_0$}}}
\put(4.7,4.6){{\sc {\color{blue} $\mathcal{E}_2^*$}}}
\put(4.7,5.85){{\sc {\color{red} $\mathcal{E}_1^*$}}}
\put(3.5,5){{\sc {\color{blue} $\mathcal{E}_1^*$}}}
\put(8.4,6.6){{\sc $(c)$}}
\put(6.5,6.6){{\sc $x_2$}}
\put(10.35,3.4){{\sc  $S_{in}$}}
\put(8.3,3.5){{\sc {\color{red} $\mathcal{E}_0$}}}
\put(7.5,5){{\sc {\color{blue} $\mathcal{E}_1^*$}}}
\put(9,5.8){{\sc {\color{red} $\mathcal{E}_1^*$}}}
\put(9,4.45){{\sc {\color{blue} $\mathcal{E}_2^*$}}}
\end{picture}\\
\begin{picture}(6.5,4)(0,0)
\put(-5,0){\rotatebox{0}{\includegraphics[width=6.5cm,height=10cm]{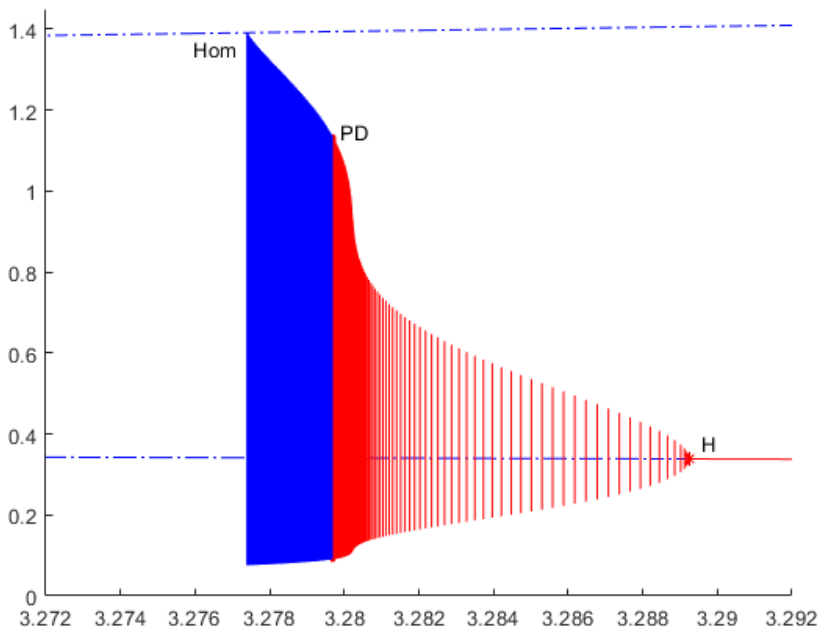}}}
\put(0,0){\rotatebox{0}{\includegraphics[width=6.5cm,height=10cm]{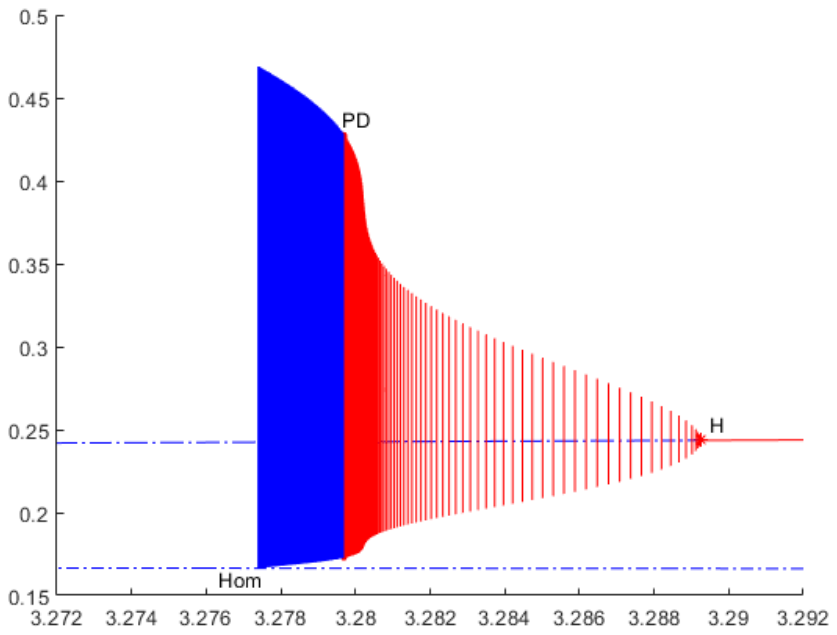}}}
\put(5,0){\rotatebox{0}{\includegraphics[width=6.5cm,height=10cm]{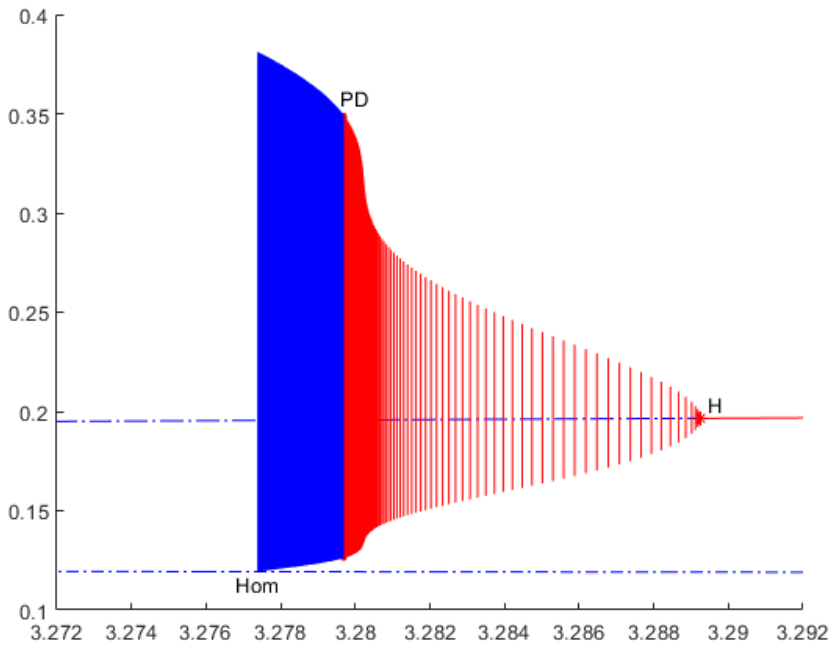}}}
\put(-1.8,6.8){{\sc $(d)$}}
\put(-3.5,6.7){{\sc $S$}}
\put(0.35,3.4){{\sc  $S_{in}$}}
\put(-0.3,6.4){{\sc {\color{blue} $\mathcal{E}_2^*$}}}
\put(-3.2,4.35){{\sc {\color{blue} $\mathcal{E}_1^*$}}}
\put(0.1,4.35){{\sc {\color{red} $\mathcal{E}_1^*$}}}
\put(3.2,6.8){{\sc $(e)$}}
\put(1.5,6.7){{\sc $x_1$}}
\put(5.35,3.4){{\sc  $S_{in}$}}
\put(1.8,4.45){{\sc {\color{blue} $\mathcal{E}_1^*$}}}
\put(5.1,4.45){{\sc {\color{red} $\mathcal{E}_1^*$}}}
\put(4.2,3.7){{\sc {\color{blue} $\mathcal{E}_2^*$}}}
\put(8.2,6.8){{\sc $(f)$}}
\put(6.5,6.7){{\sc $x_2$}}
\put(10.35,3.4){{\sc  $S_{in}$}}
\put(6.8,4.55){{\sc {\color{blue} $\mathcal{E}_1^*$}}}
\put(10.1,4.55){{\sc {\color{red} $\mathcal{E}_1^*$}}}
\put(9.2,3.7){{\sc {\color{blue} $\mathcal{E}_2^*$}}}
\end{picture}
\end{center}
\vspace{-3.3cm}
\caption{\textsc{MatCont} results for $D=0.195$. (a--c) One-parameter bifurcation diagrams of system \cref{ModelMutualism} with respect to $S_{in}$, displaying the equilibrium branches of the substrate $S$ and the biomasses $x_1$ and $x_2$. Stable solutions are shown in red and unstable ones in blue. (d--f) Enlarged views near critical parameter values, highlighting the Hopf (H), homoclinic (Hom), and period-doubling (PD) bifurcations, together with the associated branches of periodic solutions.}
\label{Fig-DBD0195}
\end{figure}
\begin{proof}
From \cref{PropStabE0}, the washout equilibrium $\mathcal{E}_0$ is always LES.  
As $S_{in}$ increases from zero, the system undergoes a Limit Point (LP) bifurcation at $S_{in} = \sigma_1$, giving rise to two positive equilibria, $\mathcal{E}_1^*$ and $\mathcal{E}_2^*$. 
The equilibrium $\mathcal{E}_2^*$ remains unstable, while $\mathcal{E}_1^*$ becomes LES only for $S_{in} > \sigma_4$, after the Hopf bifurcation.

As $S_{in}$ decreases past $\sigma_4$, $\mathcal{E}_1^*$ loses stability through a supercritical Hopf bifurcation (first Lyapunov coefficient $l_1 < 0$), giving rise to a stable limit cycle $C_1$.  
Continuing the branch of $C_1$ with respect to $S_{in}$ toward smaller values, it undergoes a period-doubling (PD) bifurcation at $S_{in} = \sigma_3$, producing an unstable periodic orbit.  
Further decreasing $S_{in}$, this unstable cycle grows in amplitude and ultimately disappears in a homoclinic bifurcation at $S_{in} = \sigma_2$, where its period diverges to $+\infty$ (see \Cref{Fig-TnearHom}(a)).
\end{proof}
\begin{figure}[!ht]        
\setlength{\unitlength}{1.0cm}
\begin{center}
\begin{picture}(10,7.2)(0,0)
\put(-3.5,0){\rotatebox{0}{\includegraphics[width=10cm,height=10cm]{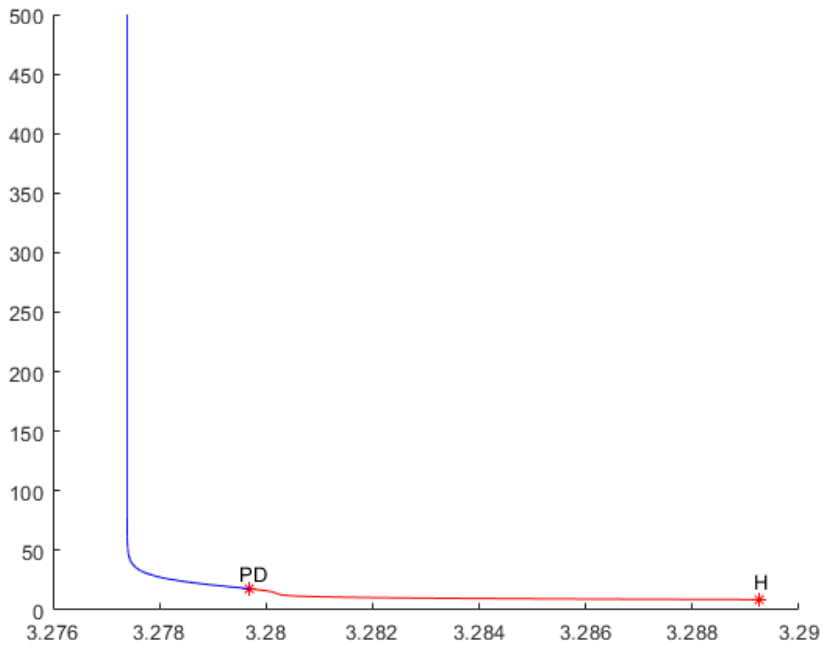}}}
\put(3.5,-0.2){\rotatebox{0}{\includegraphics[width=10cm,height=10.3cm]{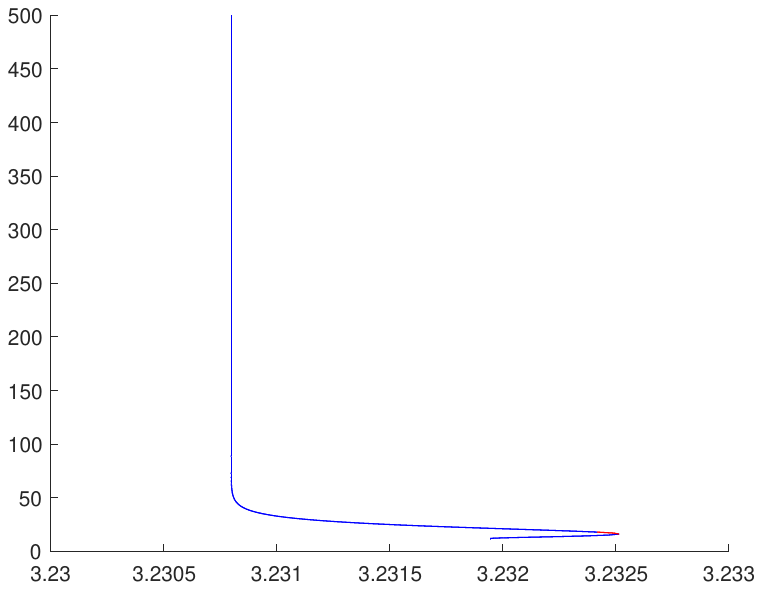}}}
\put(-1.3,6.7){{\sc  $T$}}
\put(1.4,6.7){{\sc  $(a)$}}
\put(4.7,3.35){{\sc  $S_{in}$}}
\put(-0.7,5.8){{\sc {\color{blue} $C_1$}}}
\put(1.6,3.55){{\sc {\color{red} $C_1$}}}
\put(-0.9,3){{\sc $\sigma_2$}}
\put(0.1,3){{\sc $\sigma_3$}}
\put(4.2,3){{\sc $\sigma_4$}}
\put(6,6.6){{\sc  $T$}}
\put(8.4,6.7){{\sc  $(b)$}}
\put(11.5,3.4){{\sc  $S_{in}$}}
\put(7.4,5.8){{\sc {\color{blue} $C_3$}}}
\put(10.3,3.65){{\sc {\color{red} $C_3$}}}
\put(9.8,3){{\sc {\color{blue} $C_2$}}}
\put(9.8,3.3){\sc {\color{blue} $\uparrow$}}
\put(7.2,3){{\sc $\sigma_2$}}
\put(9.2,3){{\sc $\sigma_3$}}
\put(10.5,3){{\sc $\sigma_5$}}
\end{picture}
\end{center}
\vspace{-3.2cm}
\caption{Time period $T$ of limit cycle solutions as a function of $S_{in}$ near the homoclinic bifurcation at $S_{in}=\sigma_2$. 
(a) $D=0.195$: the branch $C_1$ emerges stable at $S_{in}=\sigma_4$ (H) and loses stability at $S_{in}=\sigma_3$ (PD); its period diverges as $S_{in}\to\sigma_2^+$. 
(b) $D=0.2$: the branch $C_2$ emerges unstable at $S_{in}=\sigma_3$ (LPC) and collides with $C_3$ at $S_{in}=\sigma_5$ (LPC); decreasing $S_{in}$, $C_3$ loses stability at $S_{in}=\sigma_4$ (PD), and $T\to+\infty$ as $S_{in}\to\sigma_2^+$. Stable and unstable periodic orbits are shown in red and blue.}\label{Fig-TnearHom}
\end{figure}
\begin{remark}
In all bifurcation diagrams, stable equilibria and periodic orbits are shown in red, and unstable ones in blue. 
The labels \textsc{LP} (saddle--node), Hom (homoclinic), \textsc{PD} (period-doubling), and \textsc{H} (Hopf) denote the corresponding bifurcation types.
\end{remark}
The preceding analysis allows us to summarize the qualitative dynamics of the system:
For $S_{in} < \sigma_2$, the washout equilibrium $\mathcal{E}_0$ is the unique local attractor.  
For $S_{in} \in (\sigma_2, \sigma_3)$, only the unstable periodic orbit exists, while $\mathcal{E}_0$ remains the unique local attractor.  
For $S_{in} \in (\sigma_3, \sigma_4)$, the system exhibits bistability between $\mathcal{E}_0$ and the stable limit cycle $C_1$.  
For $S_{in} > \sigma_4$, the positive equilibrium $\mathcal{E}_1^*$ becomes LES, while $C_1$ disappears, leading to bistability between $\mathcal{E}_0$ and $\mathcal{E}_1^*$.
\subsection{Up to three limit cycles for \texorpdfstring{$D=0.2$}{D=0.2} with two LPC bifurcations}
In this subsection, we fix the dilution rate at $D=0.2$. 
The corresponding one-parameter bifurcation diagram with respect to the input substrate concentration $S_{in}$ is shown in \cref{Fig-DB-D02}, where the equilibrium substrate concentration $S$ is plotted along the vertical axis. 
This analysis reveals the coexistence of multiple limit cycles and the occurrence of two limit point of cycles (LPC) bifurcations. 
The main dynamical features are summarized in the following proposition.
\begin{proposition}
Fix $D=0.2$. For the specific growth functions $f_1$ and $f_2$ and the biological parameter values used in \cref{FigDO}, the one–parameter bifurcation diagram of \cref{ModelMutualism} with respect to $S_{in}$ is displayed in \cref{Fig-DB-D02}. The existence and local stability of all steady states and periodic solutions as $S_{in}$ varies are summarized in \cref{Tab-DB02}.
\end{proposition}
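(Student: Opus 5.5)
The argument will be numerical-continuation based, organized exactly like the proof of the case $D=0.195$, but with the extra structure coming from the two LPC points that $\Gamma_{LPC}$ produces on the horizontal line $D=0.2$, which lies between the CPC point and \texttt{R1} in \cref{FigDO}.

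\textbf{Step 1: equilibria.} By \cref{PropStabE0}, $\mathcal{E}_0$ exists and is LES for every $S_{in}$. Starting from small $S_{in}$ (region $\mathcal{J}_0$), I will continue equilibria in $S_{in}$ with \textsc{MatCont}. This detects the LP point $\sigma_1\approx 2.850$ already identified in \Cref{SectionHopfBif}, where $\mathcal{E}_1^*$ and $\mathcal{E}_2^*$ are born. On the branch of $\mathcal{E}_2^*$ one has $c_3<0$, so $\mathcal{E}_2^*$ is unstable for all larger $S_{in}$ by \cref{PropStabEe}. On the branch of $\mathcal{E}_1^*$ one has $c_3>0$, and the sign of $c_4$ switches at $\sigma_6\approx3.238$. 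The eigenvalue computation of \Cref{SectionHopfBif} gives transversal crossing there, so \textsc{MatCont} labels $\sigma_6$ as H. I will record its first Lyapunov coefficient $l_1$. Since $D=0.2$ lies below GH at $D\approx0.228$ on the other side of the codimension-two structure, I expect $l_1<0$: the Hopf is supercritical, and a stable cycle emanates as $S_{in}$ decreases below $\sigma_6$.

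\textbf{Step 2: cycles.} Starting from H, I will continue the family of periodic orbits in $S_{in}$ while monitoring the Floquet multipliers and the period $T$. Following \cref{Fig-TnearHom}(b), the expected sequence, with indices to be assigned in the table, is:
\begin{itemize}
\item the stable cycle $C_1$ born at H;
\item an LPC at $\sigma_5$, where $C_3$ (stable) meets $C_2$ (unstable);
\item a second LPC at $\sigma_3$, where $C_2$ meets the branch coming from H;
\item a PD at $\sigma_4$ on $C_3$, where a multiplier crosses $-1$;
\item a homoclinic termination at $\sigma_2$, detected by $T\to+\infty$ as $S_{in}\to\sigma_2^+$ and by the orbit approaching the saddle $\mathcal{E}_2^*$.
\end{itemize}
For each LPC I will check that the normal form coefficient is nonzero, so that the fold is generic and the stability of the two colliding cycles is indeed opposite.

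\textbf{Step 3: assembling the table.} Between consecutive critical values the numbers and stabilities of equilibria and cycles are constant, by continuity of hyperbolic objects. This lets me fill each row of \cref{Tab-DB02} and match it to the subregions of \cref{Tab-RegionOD}, namely $\mathcal{J}_1^{C_{123}^{sus}}$, $\mathcal{J}_1^{C_{12}^{su}}$, and so on.

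\textbf{Main obstacle.} The hard part is certifying that no further cycles exist beyond those found by continuation, and that the ordering of the nearby LPC and PD values is correct, since $\Gamma_{PD}$ runs very close to $\Gamma_{LPC}$. I would handle this by refining the continuation step size near those points. I would also cross-check by direct time integration from several initial conditions in each interval, to confirm tri-stability (of $\mathcal{E}_0$ with the stable cycles) where it is claimed.
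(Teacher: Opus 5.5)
Your plan follows the paper's own argument: continuation of equilibria in $S_{in}$ (with $\mathcal{E}_0$ always LES, the LP at $\sigma_1$, $\mathcal{E}_2^*$ unstable, and a supercritical Hopf of $\mathcal{E}_1^*$ at $\sigma_6$), then continuation of the cycle branch from H through the LPC at $\sigma_3$ ($C_1$ meets $C_2$), the LPC at $\sigma_5$ ($C_2$ meets $C_3$), the PD on $C_3$ at $\sigma_4$, and the homoclinic end at $\sigma_2$, with the intervals then read off into \cref{Tab-DB02}. Your extra checks (LPC normal-form coefficients, step refinement, direct simulation) go beyond the paper without changing the route; one small slip is that $\mathcal{J}_1^{C_{12}^{su}}$ does not occur on the line $D=0.2$, since $C_2$ exists only on $(\sigma_3,\sigma_5)$, where $C_3$ also exists.
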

\begin{table}[ht]
\caption{Critical values $\sigma_i$, $i=1,\ldots,6$, of the input substrate concentration $S_{in}$ for $D=0.2$. For each bifurcation value, the table reports the bifurcation type, the corresponding equilibrium coordinates $(S,x_1,x_2)$ or the period $T$ of periodic solutions, and the existence and stability of equilibria and limit cycles. 
The columns $E_0$, $\mathcal{E}_1^*$, $\mathcal{E}_2^*$, and $(C_1, C_2, C_3)$ indicate their presence and stability (S: stable, U: unstable, blank: not present).}\label{Tab-DB02}
\begin{center}
\begin{tabular}{ @{\hspace{1mm}}l@{\hspace{2mm}} @{\hspace{2mm}}l@{\hspace{2mm}}  
                 @{\hspace{2mm}}l@{\hspace{2mm}} @{\hspace{2mm}}l@{\hspace{2mm}}   
                 @{\hspace{2mm}}l@{\hspace{2mm}} @{\hspace{2mm}}l@{\hspace{2mm}} 
                @{\hspace{2mm}}l@{\hspace{2mm}} @{\hspace{2mm}}l@{\hspace{1mm}} }  
\hline
$S_{in}$ & Bif. &  $(S,x_1,x_2)$ / $T$ 
& $\mathcal{E}_0$ & $\mathcal{E}_1^*$ & $\mathcal{E}_2^*$ & $(C_1, C_2, C_3)$ & Notes \\ 
\hline
$(0,\sigma_1)$           
& -- 
& -- 
& S &  &  &  & \sl{$\mathcal{J}_0$} \\

$\sigma_1 \approx 2.8504$ 
& LP 
& \sc{$(0.665,0.191,0.144)$} 
&  &  &  &  & \sl{$\mathcal{E}_1^*=\mathcal{E}_2^*$} \\

$(\sigma_1,\sigma_2)$    
& -- 
& -- 
& S & U & U &  & $\mathcal{J}_1^0$ \\

$\sigma_2 \approx 3.2307$ 
& Hom 
& $T\to +\infty$ 
&  &  &  &  & \sl{Homoclinic} \\

$(\sigma_2,\sigma_3)$    
& -- 
& -- 
& S & U & U & $( \cdot, \cdot , U)$ & \sl{$\mathcal{J}_1^{C_1^u}$} \\

$\sigma_3 \approx 3.2319$ 
& LPC 
& $T=11.83$ 
&  &  &  &   & {\sl Collision of $C_1$ and $C_2$}  \\ 

$(\sigma_3,\sigma_4)$    
& -- 
& -- 
& S & U & U & $(S, U, U)$ & \sl{$\mathcal{J}_1^{C_{123}^{suu}}$} \\

$\sigma_4 \approx 3.2323$ 
& PD 
& $T=18.47$ 
&  &  &  &   &  \sl{Period-Doubling} \\

$(\sigma_4,\sigma_5)$    
& -- 
& -- 
& S & U & U & $(S, U, S)$ & \sl{$\mathcal{J}_1^{C_{123}^{sus}}$} \\

$\sigma_5 \approx 3.2325$ 
& LPC 
& $T=16.11$ 
&  &  &  &  &  {\sl Collision of $C_2$ and $C_3$} \\

$(\sigma_5,\sigma_6)$    
& -- 
& -- 
& S & U & U & $(S, \cdot, \cdot)$ & {\sl $\mathcal{J}_1^{C_1^s}$} \\

$\sigma_6 \approx 3.2381$ 
& H 
& \sc{$(0.341,0.244,0.197)$} 
&  &  &  &  & {\sl Hopf ($l_1 \approx -0.294$)} \\

$(\sigma_6,+\infty)$     
& -- 
& -- 
& S & S & U &  & {\sl $\mathcal{J}_2^0$} \\
\hline
\end{tabular}
\end{center}
\end{table}
\begin{figure}[!ht]
\setlength{\unitlength}{1.0cm}
\begin{center}
\begin{picture}(8,7)(0,0)
\put(-3.5,0){\rotatebox{0}{\includegraphics[width=8cm,height=9cm]{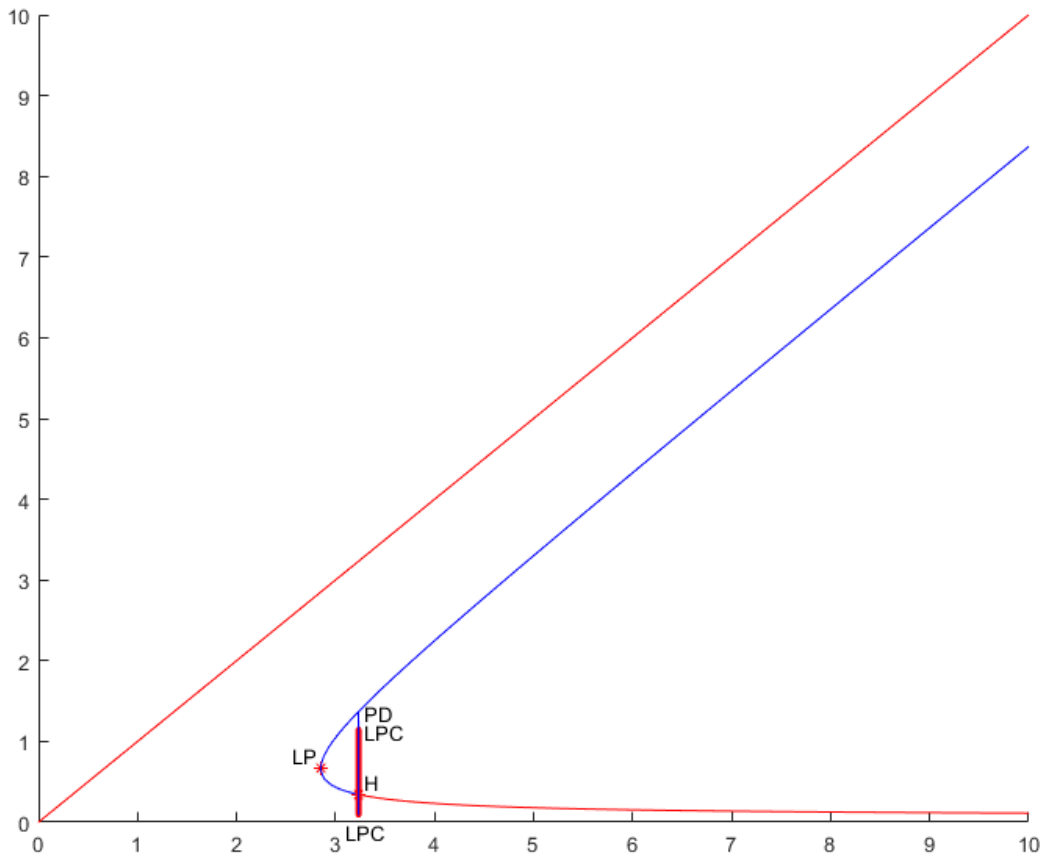}}}
\put(3.5,0){\rotatebox{0}{\includegraphics[width=8cm,height=9cm]{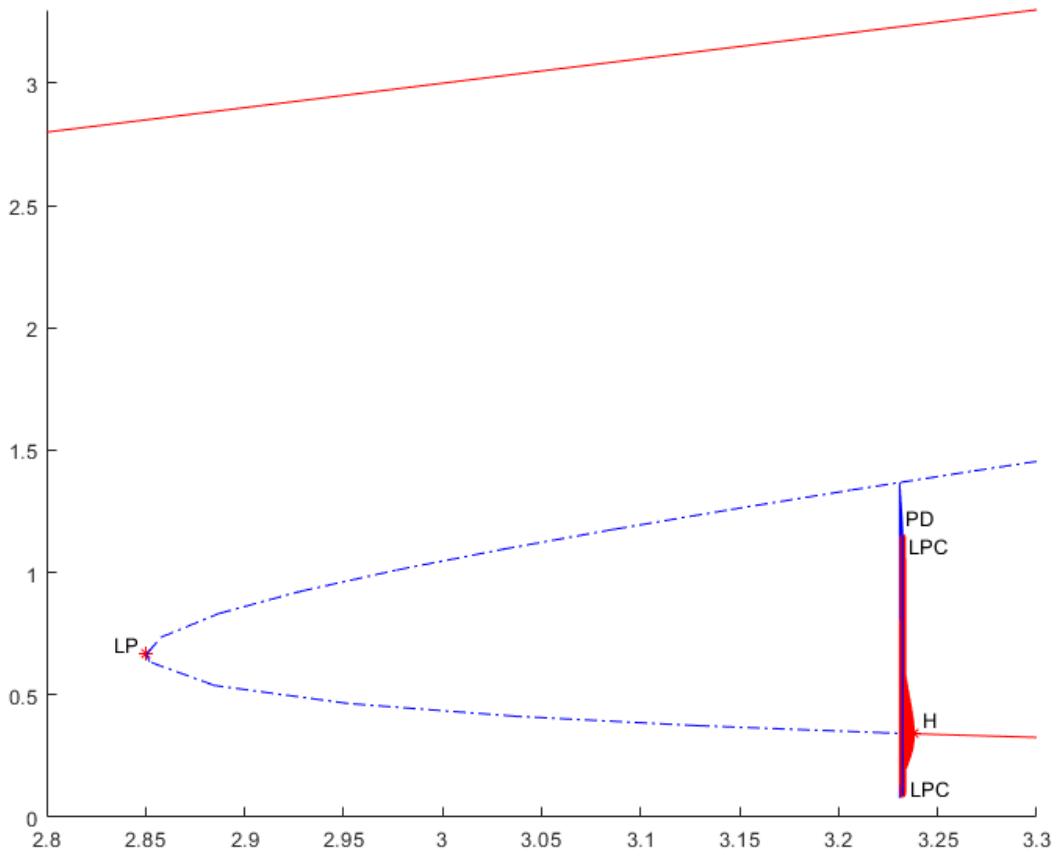}}}
\put(0.2,6.6){{\sc $(a)$}}
\put(-2.4,6.6){{\sc $S$}}
\put(3.55,2.5){{\sc  $S_{in}$}}
\put(-0.95,2.45){{\sc {\color{blue} $\mathcal{E}_1^*$}}}
\put(1.5,5.4){{\sc {\color{red} $\mathcal{E}_0$}}}
\put(2.5,2.7){{\sc {\color{red} $\mathcal{E}_1^*$}}}
\put(1.5,4.75){{\sc {\color{blue} $\mathcal{E}_2^*$}}}
\put(7.2,6.6){{\sc $(b)$}}
\put(4.5,6.6){{\sc $S$}}
\put(10.9,2.5){{\sc  $S_{in}$}}
\put(6,2.85){{\sc {\color{blue} $\mathcal{E}_1^*$}}}
\put(8.5,6.2){{\sc {\color{red} $\mathcal{E}_0$}}}
\put(10.3,2.7){{\sc {\color{red} $\mathcal{E}_1^*$}}}
\put(8.5,4.2){{\sc {\color{blue} $\mathcal{E}_2^*$}}}
\end{picture}
\begin{picture}(8,5)(0,0)
\put(-3.5,0){\rotatebox{0}{\includegraphics[width=8cm,height=9cm]{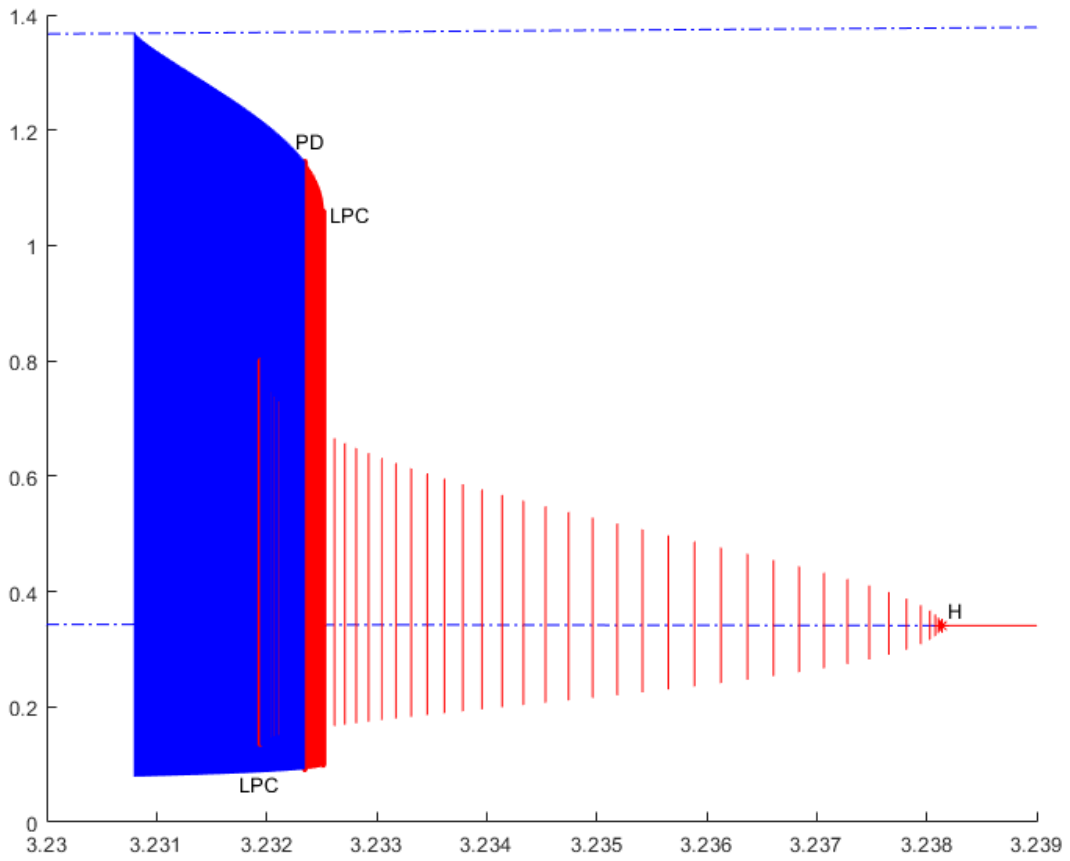}}}
\put(3.5,0){\rotatebox{0}{\includegraphics[width=8cm,height=9cm]{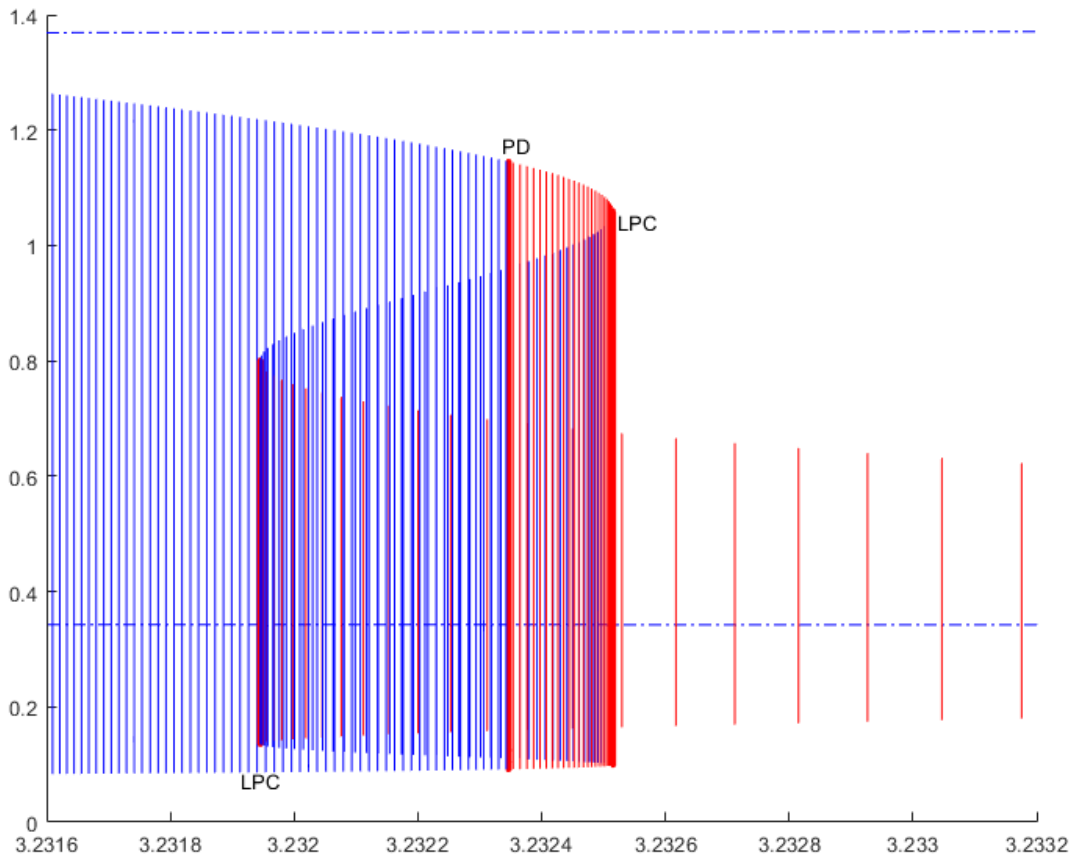}}}
\put(0.2,6.8){{\sc $(c)$}}
\put(-2.5,6.7){{\sc $S$}}
\put(3.55,2.5){{\sc  $S_{in}$}}
\put(-2.4,3.65){{\sc {\color{blue} $\mathcal{E}_1^*$}}}
\put(-2.1,6.65){{\tiny Hom}}
\put(-1.4,6.25){{\sc {\color{blue} $C_3$}}}
\put(-0.8,5.8){{\sc {\color{red} $C_3$}}}
\put(1.6,4.05){{\sc {\color{red} $C_1$}}}
\put(3.55,3.5){{\sc {\color{red} $\mathcal{E}_1^*$}}}
\put(1.5,6.3){{\sc {\color{blue} $\mathcal{E}_2^*$}}}
\put(7.2,6.8){{\sc $(d)$}}
\put(4.5,6.7){{\sc $S$}}
\put(10.9,2.5){{\sc $S_{in}$}}
\put(5.8,6.2){{\sc {\color{blue} $C_3$}}}
\put(7.7,5.9){{\sc {\color{red} $C_3$}}}
\put(9.5,4.5){{\sc {\color{red} $C_1$}}}
\put(6.6,5.35){{\sc {\color{blue} $C_2$}}}
\put(9.85,3.65){{\sc {\color{blue} $\mathcal{E}_1^*$}}}
\put(8.5,6.3){{\sc {\color{blue} $\mathcal{E}_2^*$}}}
\end{picture}
\end{center}
\vspace{-2.5cm}
\caption{\textsc{MatCont} results for $D=0.2$. (a) One-parameter bifurcation diagram of the substrate $S$ with respect to $S_{in}$, together with the associated branches of periodic solutions. (b)--(d) Enlarged views near the bifurcation points, highlighting the LP, LPC, PD, H, and Hom bifurcations. Stable solutions are shown in red and unstable ones in blue.}\label{Fig-DB-D02}
\end{figure}
\begin{proof}
For $D=0.2$, the horizontal line in the operating diagram intersects the bifurcation curves 
$\Gamma_{LP}$, $\Gamma_H$, and $\Gamma_{LPC}$ (twice) at the critical values $\sigma_i$ of $S_{in}$ listed in \cref{Tab-DB02}, which determine the sequence of bifurcations of equilibria and limit cycles.

More precisely, from \Cref{PropStabE0}, the washout equilibrium $\mathcal{E}_0$ is always LES. Starting from $S_{in}=0$, a limit point (LP) bifurcation occurs at $S_{in}=\sigma_1$, creating two positive equilibria, $\mathcal{E}_1^*$ and $\mathcal{E}_2^*$. 
While $\mathcal{E}_2^*$ remains unstable, $\mathcal{E}_1^*$ is LES for $S_{in}>\sigma_6$. 
At $S_{in}=\sigma_6$, $\mathcal{E}_1^*$ undergoes a supercritical Hopf bifurcation ($l_1<0$): when $S_{in}$ decreases through $\sigma_6$, it loses stability and a stable limit cycle $C_1$ emerges.
Following the branch of $C_1$ with decreasing $S_{in}$, it collides with an unstable limit cycle $C_2$ at $\sigma_3$ through a Limit Point of Cycles (LPC) bifurcation, causing both cycles to disappear.  
Continuing with increasing $S_{in}$ from $\sigma_3$, the unstable cycle $C_2$ persists and interacts with a newly created stable limit cycle $C_3$ at the second LPC bifurcation at $\sigma_5$, leading to the disappearance of both cycles.  
Decreasing $S_{in}$ from $\sigma_5$, $C_3$ undergoes a period-doubling (PD) bifurcation at $\sigma_4$, becoming unstable and producing a period-doubled orbit.  
Finally, as $S_{in}$ decreases further, the remaining unstable cycle grows in amplitude and vanishes in a homoclinic bifurcation (Hom) at $\sigma_2$, where the oscillation period diverges to $+\infty$ (see \cref{Fig-TnearHom}(b)).  
\end{proof}

The preceding analysis allows us to summarize the qualitative dynamics of the system as $S_{in}$ varies:

\begin{itemize}[leftmargin=*]
    \item For $S_{in} < \sigma_2$, the washout equilibrium $\mathcal{E}_0$ is the unique local attractor.
 
    \item For $S_{in} \in (\sigma_2, \sigma_3)$, only the unstable limit cycle $C_3$ exists, so $\mathcal{E}_0$ remains the sole attractor.

    \item For $S_{in} \in (\sigma_3, \sigma_4)$, one stable limit cycle ($C_1$) coexists with two unstable cycles ($C_2$ and $C_3$), leading to bistability between $\mathcal{E}_0$ and $C_1$.

    \item For $S_{in} \in (\sigma_4, \sigma_5)$, the system exhibits \emph{tri-stability}: trajectories may converge to $\mathcal{E}_0$, $C_1$, or $C_3$, depending on the initial condition.

    \item For $S_{in} \in (\sigma_5, \sigma_6)$, only the stable limit cycle $C_1$ persists among periodic solutions, resulting again in bistability between $\mathcal{E}_0$ and $C_1$.

    \item Finally, for $S_{in} > \sigma_6$, the positive equilibrium $\mathcal{E}_1^*$ becomes LES while $\mathcal{E}_0$ remains stable, yielding bistability between the two equilibria.
\end{itemize}
\subsection{Bifurcation scenarios for increasing dilution rate}
We now describe how the bifurcation structure evolves for larger values of the dilution rate $D$.  
In contrast to the case $D=0.2$, the configuration of Hopf, LPC, PD, and homoclinic bifurcations simplifies progressively as $D$ increases.  
We summarize below the qualitative scenarios corresponding to three representative values of $D$.
\begin{proposition}
Fix $D\in\{0.220,\,0.224,\,0.230\}$. For the specific growth functions $f_1$ and $f_2$ and the biological parameter values used in \cref{FigDO}, the one–parameter bifurcation diagrams of system \cref{ModelMutualism} with respect to $S_{in}$ are shown in \cref{Fig-DB3Cases}. These diagrams illustrate the equilibrium branches and the branches of periodic solutions together with the associated local bifurcations as $S_{in}$ varies. The critical values of $S_{in}$, as well as the existence and local stability of the corresponding steady states and periodic solutions for each value of $D$, are summarized in \cref{Tab-DB22-23}.
\end{proposition}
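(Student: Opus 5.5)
The argument is numerical and structural, and I would model it on the proofs given for $D=0.195$ and $D=0.2$. For each fixed $D\in\{0.220,0.224,0.230\}$, I would read off which bifurcation curves of \cref{TabCurvBif} the horizontal line crosses in the operating diagram \cref{FigDO}. The crossing points are the critical values $\sigma_i$ of $S_{in}$. Their ordering is fixed by the position of $D$ relative to the codimension-two points of \cref{Tab-VPDOMatc1}:
\begin{itemize}
\item $D=0.22$ lies between CPC and \texttt{R1}, so I expect the ordered crossings LP, Hom, LPC, H, PD, LPC, which is the sequence LPC--H--PD--LPC announced earlier;
\item $D=0.224$ lies between \texttt{R1} and GH;
\item $D=0.23$ lies above GH, so the line no longer meets $\Gamma_{LPC}$ and the Hopf bifurcation is subcritical there (GH has $l_2<0$, so $l_1$ changes sign across it).
\end{itemize}
I would confirm each value $\sigma_i$ by one-parameter continuation in \textsc{MatCont}, which produces \cref{Fig-DB3Cases} and the entries of \cref{Tab-DB22-23}.

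The equilibrium part follows the earlier analysis. By \cref{PropStabE0}, $\mathcal{E}_0$ is LES for every $S_{in}$. The LP crossing on $\Gamma_{LP}$ creates $\mathcal{E}_1^*$ and $\mathcal{E}_2^*$ (\cref{PropExEet}), and $\mathcal{E}_2^*$ stays unstable because $c_3<0$. Its partner $\mathcal{E}_1^*$ has $c_3>0$ and changes stability only where $c_4$ vanishes (\cref{PropStabEe}), which is exactly the Hopf value. At that value I would check numerically that a complex pair $\mu\pm i\nu$ crosses the imaginary axis transversally, as in \cref{SectionHopfBif}, and read the sign of the first Lyapunov coefficient $l_1$ to determine criticality.

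For the periodic part, I would start the continuation of the cycle branch at the Hopf point and follow it toward smaller $S_{in}$. Along the branch I would record:
\begin{itemize}
\item the Floquet multipliers, with a multiplier crossing at $+1$ signalling an LPC (fold of cycles) and a crossing at $-1$ signalling a PD;
\item the period $T$, whose divergence locates the homoclinic value, as in \cref{Fig-TnearHom}.
\end{itemize}
From the stability changes along the branch I would assign the labels $C_1,C_2,C_3$ and their stability on each interval, and place every interval into a subregion of \cref{Tab-RegionOD}. For $D=0.23$ I expect a single unstable branch, born subcritically and ending at Hom. From this I would deduce the attractor summary on each interval: bistability or tristability involving $\mathcal{E}_0$, $\mathcal{E}_1^*$ and any stable cycles.

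I expect the main obstacle to be the case $D=0.224$ near \texttt{R1} and \texttt{R2}. There the LPC and PD values lie extremely close together, so their ordering along $S_{in}$ and the stability assignment of the cycles are delicate. I would refine the continuation step size, monitor the multipliers directly, and cross-check with the two-parameter curves in \cref{FigDO}(c)--(e) to ensure that no crossing is missed.
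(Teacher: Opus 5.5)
Your method is the paper's: its proof says only that, for each $D$, equilibria and periodic orbits are continued in \textsc{MatCont}, and that the LP, H, LPC, PD and Hom points and the stability changes are read off the branches. You describe the same procedure in more detail. The problem is that the concrete expectations you carry over from $D=0.195$ and $D=0.2$ contradict \cref{Tab-DB22-23}. Since the proposition consists of exactly that table, these errors matter.

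For all three values of $D$, the homoclinic value lies to the \emph{right} of the Hopf value. It therefore sits inside $\mathcal{J}_2$, where $\mathcal{E}_1^*$ is already LES, and not just after LP as for $D\le 0.2$. At $D=0.22$ the order is LP, LPC, H, Hom, PD, LPC, with $\sigma_4\approx3.066645<\sigma_5\approx3.066687<\sigma_6\approx3.066688$. It is not LP, Hom, LPC, H, PD, LPC. The branch structure is as follows:
\begin{itemize}
\item the stable cycle $C_1$ born at H folds at $\sigma_2$ into the unstable cycle $C_2$;
\item $C_2$ runs back toward larger $S_{in}$, past the Hopf value, and folds at $\sigma_6$ into $C_3$;
\item $C_3$ is stable down to the PD value $\sigma_5$ and, once unstable, terminates at Hom $\sigma_4$.
\end{itemize}
This structure produces the subregions $\mathcal{J}_2^{C_{23}^{uu}}$ and $\mathcal{J}_2^{C_{23}^{us}}$. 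The latter includes the tristability of $\mathcal{E}_0$, $\mathcal{E}_1^*$ and $C_3$ on $(\sigma_5,\sigma_6)$. Your ordering would misplace all of this. Similarly, at $D=0.23$ the Hopf point is subcritical and the unstable cycle exists for $S_{in}>\sigma_2$ (H at $2.982$, Hom at $2.996$). That branch must be followed toward larger, not smaller, $S_{in}$.

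Your anticipated obstacle is also misplaced. At $D=0.224$ there is no PD at all, because the line lies above \texttt{R2} (the turning point of $\Gamma_{PD}$) and above \texttt{R1}. The sequence there is simply LP, LPC, H, Hom. The numerically delicate case is $D=0.22$, where Hom, PD and the second LPC lie within about $4\times10^{-5}$ of one another.

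Finally, the remark ``GH has $l_2<0$, so $l_1$ changes sign across it'' is a non sequitur. $l_1$ changes sign at a GH point by definition. The sign of $l_2$ instead decides on which side of GH the LPC curve emanates: in the Bautin normal form, $l_2<0$ places it on the side where $l_1>0$. The configuration in \cref{Tab-DB22-23} is a small stable cycle and a larger unstable one folding on the supercritical side, which is what the normal form gives for $l_2>0$. So the reported sign of $l_2$ cannot justify your conclusion that the line $D=0.23$ misses $\Gamma_{LPC}$. That conclusion, like $l_1\approx0.017>0$ there, has to come from the continuation itself.
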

\begin{figure}[!ht]
\setlength{\unitlength}{1.0cm}
\begin{center}
\begin{picture}(7.9,8.7)(0,0)
\put(-5.2,-0.2){\rotatebox{0}{\includegraphics[width=7.5cm,height=13cm]{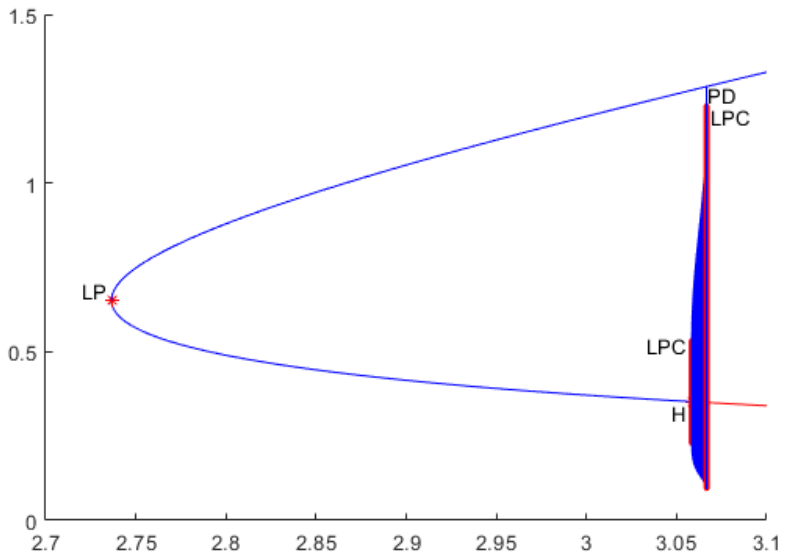}}}
\put(0,0){\rotatebox{0}{\includegraphics[width=7.5cm,height=13cm]{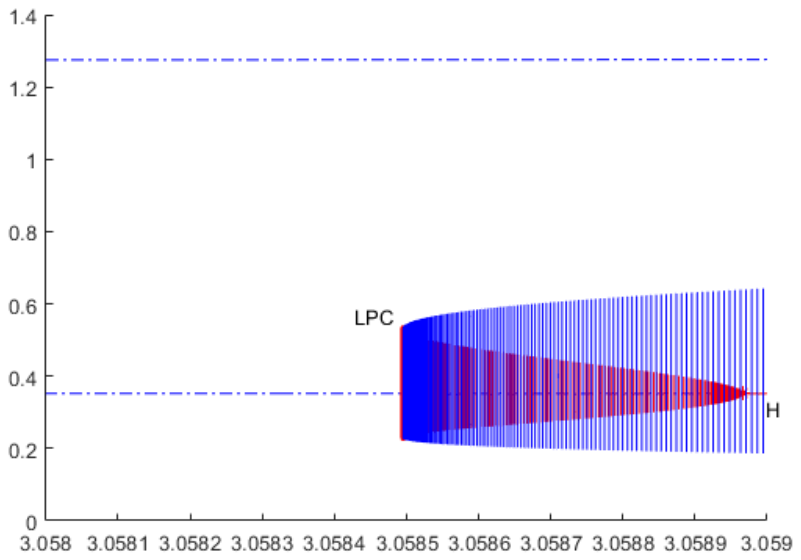}}}
\put(5.3,0){\rotatebox{0}{\includegraphics[width=7.5cm,height=13cm]{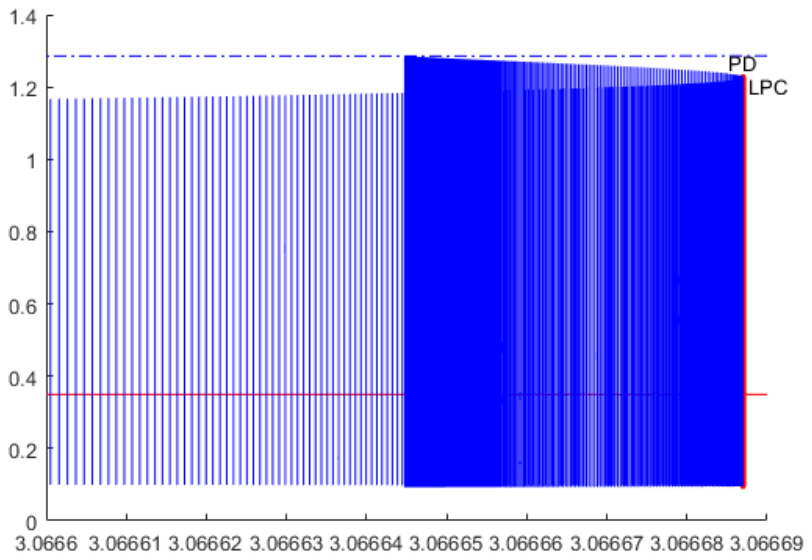}}}
\put(-1.7,8.3){{\sc $(a)$}}
\put(-3.4,8.3){{\sc $S$}}
\put(-1.2,7.6){{\sc {\color{blue} $\mathcal{E}_2^*$}}}
\put(-1.2,5.4){{\sc {\color{blue} $\mathcal{E}_1^*$}}}
\put(0.85,5.45){{\sc {\color{red} $\mathcal{E}_1^*$}}}
\put(0.85,4.65){{\sc  $S_{in}$}}
\put(3.7,8.45){{\sc $(b)$}}
\put(1.8,8.4){{\sc $S$}}
\put(2.5,7.85){{\sc {\color{blue} $\mathcal{E}_2^*$}}}
\put(2.5,5.4){{\sc {\color{blue} $\mathcal{E}_1^*$}}}
\put(4.7,6.45){{\sc {\color{blue} $C_2$}}}
\put(3.8,4.9){{\sc {\color{red} $C_1$}}}
\put(4,5.2){\sc {\color{red} $\nearrow$}}
\put(6.05,5.65){{\sc {\color{red} $\mathcal{E}_1^*$}}}
\put(6.05,4.7){{\sc  $S_{in}$}}
\put(9,8.4){{\sc $(c)$}}
\put(7.05,8.4){{\sc $S$}}
\put(7.6,8.2){{\sc {\color{blue} $\mathcal{E}_1^*$}}}
\put(8.3,5.8){{\sc {\color{red} $\mathcal{E}_1^*$}}}
\put(8,7.9){{\sc {\color{blue} $C_2$}}}
\put(10,8.15){{\sc {\color{blue} $C_3$}}}
\put(11.35,4.7){{\sc  $S_{in}$}}
\end{picture}
\\
\begin{picture}(7.9,4.4)(0,0)
\put(-5.2,0){\rotatebox{0}{\includegraphics[width=7.5cm,height=13cm]{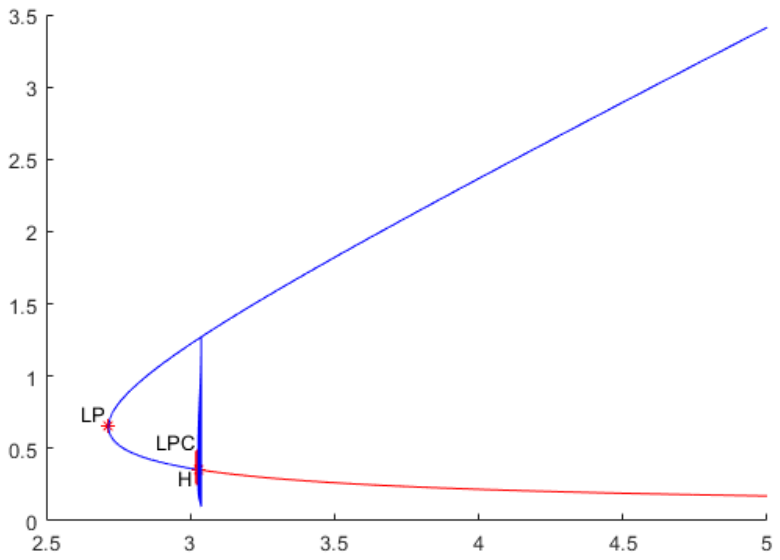}}}
\put(0,0){\rotatebox{0}{\includegraphics[width=7.5cm,height=13cm]{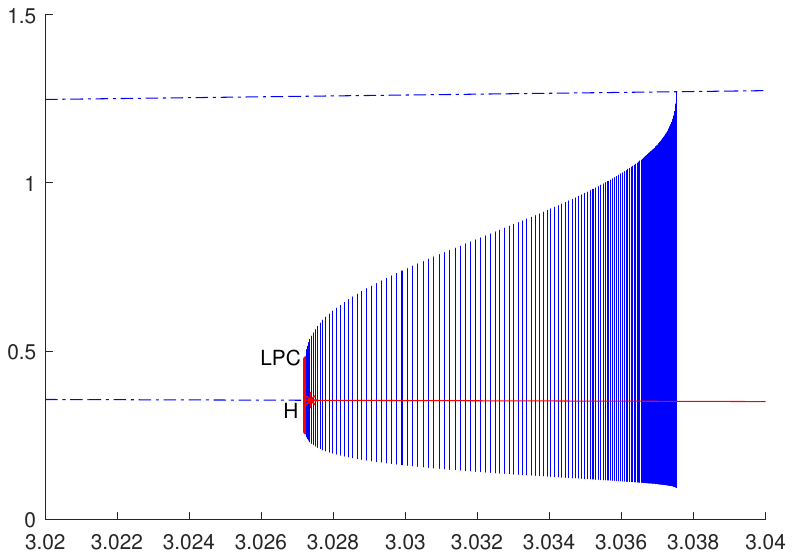}}}
\put(5.2,0){\rotatebox{0}{\includegraphics[width=7.5cm,height=13cm]{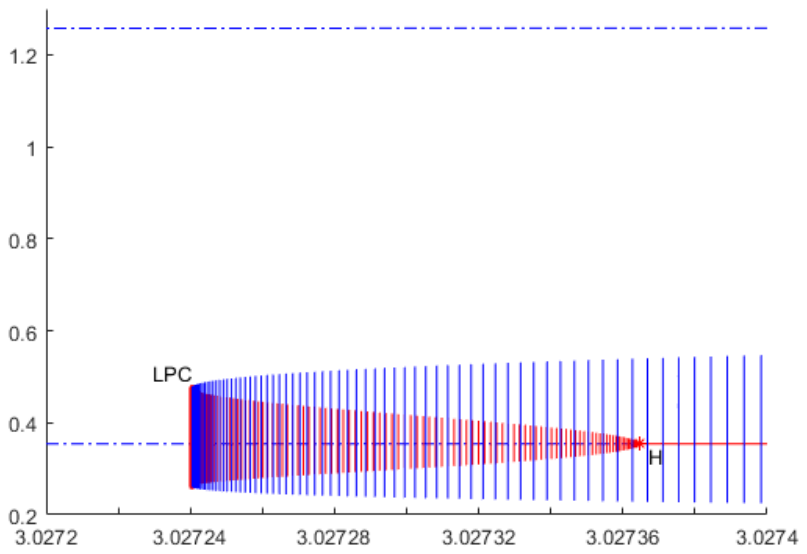}}}
\put(-1.8,8.3){{\sc $(d)$}}
\put(-3.4,8.3){{\sc $S$}}
\put(-1.2,7.35){{\sc {\color{blue} $\mathcal{E}_2^*$}}}
\put(-3.2,4.95){{\sc {\color{blue} $\mathcal{E}_1^*$}}}
\put(-1.2,5.1){{\sc {\color{red} $\mathcal{E}_1^*$}}}
\put(0.85,4.7){{\sc  $S_{in}$}}
\put(3.7,8.3){{\sc $(e)$}}
\put(1.75,8.3){{\sc $S$}}
\put(2.5,8){{\sc {\color{blue} $\mathcal{E}_2^*$}}}
\put(2.1,5.35){{\sc {\color{blue} $\mathcal{E}_1^*$}}}
\put(4.7,7.25){{\sc {\color{blue} $C_2$}}}
\put(6.05,5.5){{\sc {\color{red} $\mathcal{E}_1^*$}}}
\put(6.05,4.7){{\sc  $S_{in}$}}
\put(8.9,8.5){{\sc $(f)$}}
\put(6.95,8.4){{\sc $S$}}
\put(7.5,8.05){{\sc {\color{blue} $\mathcal{E}_2^*$}}}
\put(7,5.4){{\sc {\color{blue} $\mathcal{E}_1^*$}}}
\put(8.2,5.6){{\sc {\color{red} $C_1$}}}
\put(11.25,5.2){{\sc {\color{red} $\mathcal{E}_1^*$}}}
\put(9.5,5.9){{\sc {\color{blue} $C_2$}}}
\put(11.25,4.7){{\sc  $S_{in}$}}
\end{picture}
\\
\begin{picture}(7.9,4.5)(0,0)
\put(-3,0){\rotatebox{0}{\includegraphics[width=7.5cm,height=13cm]{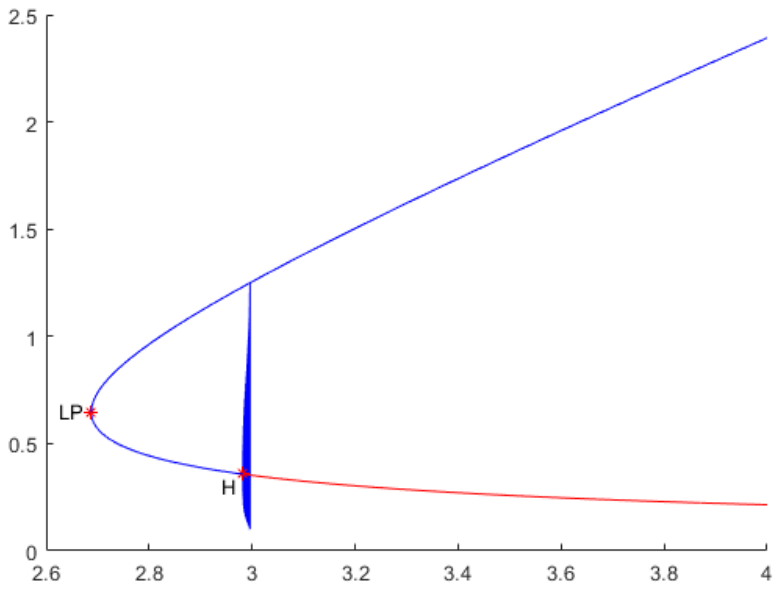}}}
\put(3,0){\rotatebox{0}{\includegraphics[width=7.5cm,height=13cm]{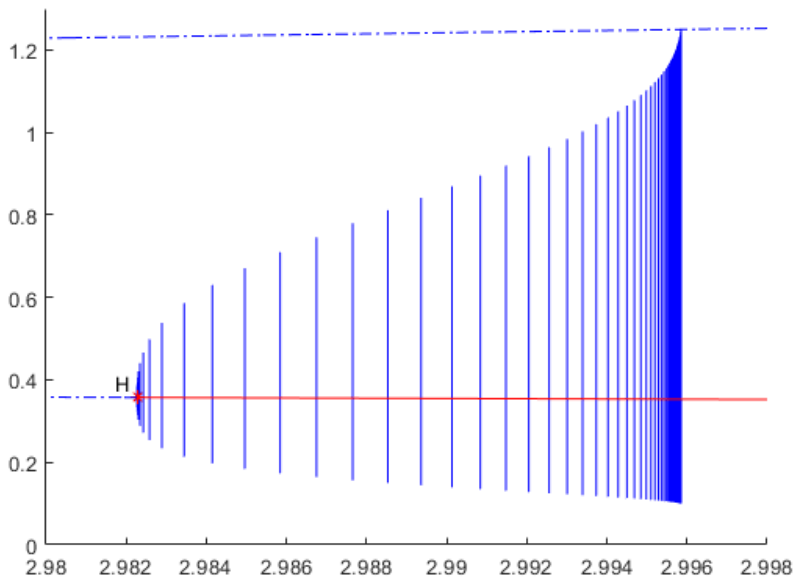}}}
\put(0.5,8.5){{\sc $(g)$}}
\put(-1.2,8.5){{\sc $S$}}
\put(1.2,7.65){{\sc {\color{blue} $\mathcal{E}_2^*$}}}
\put(-0.8,5){{\sc {\color{blue} $\mathcal{E}_1^*$}}}
\put(1.2,4.8){{\sc {\color{red} $\mathcal{E}_1^*$}}}
\put(3.1,4.6){{\sc  $S_{in}$}}
\put(6.5,8.6){{\sc $(h)$}}
\put(4.8,8.5){{\sc $S$}}
\put(5.5,8.1){{\sc {\color{blue} $\mathcal{E}_2^*$}}}
\put(7.2,7.5){{\sc {\color{blue} $C_1$}}}
\put(4.7,5.8){{\sc {\color{blue} $\mathcal{E}_1^*$}}}
\put(9.1,5.65){{\sc {\color{red} $\mathcal{E}_1^*$}}}
\put(9.1,4.6){{\sc  $S_{in}$}}
\end{picture}
\end{center}
\vspace{-4.5cm}
\caption{One-parameter bifurcation diagrams of system \cref{ModelMutualism} with respect to $S_{in}$. 
Panels (a--c) correspond to $D=0.220$, (d--f) to $D=0.224$, and (g--h) to $D=0.230$. 
The diagrams display the equilibrium branches of the substrate $S$ together with the associated branches of periodic solutions. 
Local bifurcations of equilibria and periodic orbits are indicated, including H, LPC, Hom, and PD bifurcations.}\label{Fig-DB3Cases}
\end{figure}
\begin{table}[!ht]
\caption{Critical values $\sigma_i$ of the input substrate concentration $S_{in}$ for different dilution rates $D$. For each case, the table reports the bifurcation type, the corresponding equilibrium coordinates $(S,x_1,x_2)$ or the period $T$ of periodic solutions, and the existence and stability of equilibria and limit cycles. 
The columns $\mathcal{E}_0$, $\mathcal{E}_1^*$, and $\mathcal{E}_2^*$ correspond to equilibria (S: stable, U: unstable), while the column $(C_1,C_2,C_3)$ indicates the periodic orbits present.}
\label{Tab-DB22-23}
\begin{center}
\begin{tabular}{ @{\hspace{1mm}}l@{\hspace{2mm}} @{\hspace{2mm}}l@{\hspace{2mm}}  
                 @{\hspace{2mm}}l@{\hspace{2mm}} @{\hspace{2mm}}l@{\hspace{2mm}}   
                 @{\hspace{2mm}}l@{\hspace{2mm}} @{\hspace{2mm}}l@{\hspace{2mm}} 
                @{\hspace{2mm}}l@{\hspace{2mm}} @{\hspace{2mm}}l@{\hspace{1mm}} }  
\hline
 $S_{in}$ & Bif. & $(S,x_1,x_2)$ / $T$ 
& $\mathcal{E}_0$ & $\mathcal{E}_1^*$ & $\mathcal{E}_2^*$ 
& {\sl$(C_1,C_2,C_3)$} & Notes \\
\hline

\multicolumn{8}{c}{\textbf{$D=0.22$}}\\
\hline

$(0,\sigma_1)$ & -- & -- & S & & & & {\sl$\mathcal{J}_0$} \\

$\sigma_1\approx2.736$ & LP & {\sc $(0.652,0.197,0.150)$} 
& & & & & {\sl$\mathcal{E}_1^*=\mathcal{E}_2^*$} \\

$(\sigma_1,\sigma_2)$ & -- & -- & S & U & U & & {\sl$\mathcal{J}_1^0$} \\

 $\sigma_2\approx3.0585$ & LPC & $T=9.60$ 
& & & & & {\sl Collision of $C_1$ and $C_2$} \\

 $(\sigma_2,\sigma_3)$ & -- & -- & S & U & U & $(S,U,\cdot)$ & {\sl$\mathcal{J}_1^{C_{12}^{su}}$} \\

$\sigma_3\approx3.0590$ & H & {\sc $(0.352,0.247,0.200)$} 
& & & & & {\sl Hopf ($l_1\approx-0.085$)} \\

 $(\sigma_3,\sigma_4)$ & -- & -- & S & S & U & $(\cdot,U,\cdot)$ & {\sl$\mathcal{J}_2^{C_2^u}$} \\

 $\sigma_4\approx3.066645$ & Hom & $T\to+\infty$ 
& & & & & {\sl Homoclinic} \\

 $(\sigma_4,\sigma_5)$ & -- & -- & S & S & U & $(\cdot,U,U)$ & {\sl $\mathcal{J}_2^{C_{23}^{uu}}$} \\

 $\sigma_5\approx3.066687$ & PD & $T=26.26$ 
& & & & &  {\sl Period-Doubling} \\

 $(\sigma_5,\sigma_6)$ & -- & -- & S & S & U & $(\cdot,U,S)$ & {\sl $\mathcal{J}_2^{C_{23}^{us}}$} \\

 $\sigma_6\approx3.066688$ & LPC & $T=26.13$ 
& & & & & {\sl Collision of $C_2$ and $C_3$} \\

 $(\sigma_6,+\infty)$ & -- & -- & S & S & U & & {\sl $\mathcal{J}_2^0$} \\

\hline
\multicolumn{8}{c}{\textbf{$D=0.224$}}\\
\hline

 $(0,\sigma_1)$ & -- & -- & S & & & & {\sl $\mathcal{J}_0$} \\

 $\sigma_1\approx2.716$ & LP & {\sc $(0.650,0.198,0.151)$} 
& & & & & {\sl $\mathcal{E}_1^*=\mathcal{E}_2^*$} \\

 $(\sigma_1,\sigma_2)$ & -- & -- & S & U & U & & {\sl $\mathcal{J}_1^0$} \\

 $\sigma_2\approx3.02724$ & LPC & $T=9.3$ 
& & & & & {\sl Collision of $C_1$ and $C_2$} \\

 $(\sigma_2,\sigma_3)$ & -- & -- & S & U & U & $(S,U,\cdot)$ & {\sl $\mathcal{J}_1^{C_{12}^{su}}$} \\

 $\sigma_3\approx3.02736$ & H & {\sc $(0.354,0.247,0.201)$} 
& & & & & {\sl Hopf ($l_1\approx-0.044$)} \\

 $(\sigma_3,\sigma_4)$ & -- & -- & S & S & U & $(\cdot,U,\cdot)$ & {\sl $\mathcal{J}_2^{C_2^u}$} \\

 $\sigma_4\approx3.0376$ & Hom & $T\to+\infty$ 
& & & & & {\sl Homoclinic} \\

 $(\sigma_4,++\infty)$ & -- & -- & S & S & U & & {\sl $\mathcal{J}_2^0$} \\

\hline
\multicolumn{8}{c}{\textbf{$D=0.23$}}\\
\hline

 $(0,\sigma_1)$ & -- & -- & S & & & & {\sl $\mathcal{J}_0$} \\

$\sigma_1\approx2.736$ & LP & {\sc $(0.647,0.199,0.152)$} 
& & & & & {\sl $\mathcal{E}_1^*=\mathcal{E}_2^*$} \\

 $(\sigma_1,\sigma_2)$ & -- & -- & S & U & U & & {\sl $\mathcal{J}_1^0$} \\

 $\sigma_2\approx2.982$ & H & {\sc $(0.357,0.248,0.201)$} 
& & & & & {\sl  Hopf ($l_1\approx0.017$)} \\

 $(\sigma_2,\sigma_3)$ & -- & -- & S & S & U & $(U,\cdot,\cdot)$ & {\sl $\mathcal{J}_2^{C_1^u}$} \\

 $\sigma_3\approx2.996$ & Hom & $T\to+\infty$ 
& & & & & {\sl Homoclinic} \\

 $(\sigma_3,+\infty)$ & -- & -- & S & S & U & & {\sl $\mathcal{J}_2^0$} \\
\hline
\end{tabular}
\end{center}
\end{table}
\begin{proof}
For each $D \in \{0.220, 0.224, 0.230\}$, the bifurcation diagrams of system \cref{ModelMutualism} with respect to $S_{in}$ are obtained by continuation of equilibria and periodic solutions. 
Local bifurcations (LP, H, LPC, Hom, PD) and the associated stability are identified along the branches. 
As $D$ increases, secondary bifurcations progressively disappear, leading to simpler periodic structures. 
All critical values of $S_{in}$ and stability properties are reported in \cref{Tab-DB22-23}, and the diagrams are shown in \cref{Fig-DB3Cases}.
\end{proof}

The qualitative dynamics of the system with respect to $S_{in}$ can be summarized as follows for the three values of $D$ (see \cref{Fig-DB3Cases} and \cref{Tab-DB22-23}):

\begin{itemize}[leftmargin=*]
\item For $D=0.220$, the system exhibits rich multistability. As $S_{in}$ increases, two stable limit cycles, $C_1$ and $C_3$, emerge through Hopf and LPC bifurcations, while $\mathcal{E}_0$ remains LES. Secondary bifurcations such as PD and LPC mediate transitions between coexisting periodic orbits, resulting in bistability or tristability depending on $S_{in}$.

\item For $D=0.224$, the dynamics simplifies. A Hopf bifurcation generates a single stable limit cycle $C_1$, which interacts with an unstable cycle $C_2$ via an LPC bifurcation. As $S_{in}$ increases further, the limit cycles disappear through a homoclinic bifurcation. Over the intermediate range, the system may exhibit bistability either between $\mathcal{E}_0$ and $C_1$ or between $\mathcal{E}_0$ and the positive equilibrium $\mathcal{E}_1^*$, with no secondary bifurcations occurring.

\item For $D=0.230$, the dynamics is even simpler. A subcritical Hopf bifurcation gives rise to an unstable periodic orbit, while for larger $S_{in}$, the positive equilibrium $\mathcal{E}_1^*$ becomes LES. This results in bistability between $\mathcal{E}_0$ and $\mathcal{E}_1^*$. No secondary bifurcations are present in this case.
\end{itemize}

Overall, increasing $D$ reduces multistability and coexisting periodic solutions, simplifying global dynamics. Bistability regions shrink, and the system transitions from complex behavior with multiple limit cycles to simpler dynamics dominated by a single stable equilibrium or limit cycle, highlighting the key role of dilution rate in shaping coexistence patterns.
\section{Conclusions}                                 \label{Sec-Conc}
In this work, we analyzed a chemostat model describing the competition of two species for a single nutrient under an obligate mutualistic interaction, where each species promotes and depends on the growth of the other. Mortality was incorporated through distinct removal rates for each species.  

We first carried out an analytical study of the system, determining the existence of equilibria and establishing local stability conditions for the different equilibria. We then proved the global asymptotic stability of the washout equilibrium $\mathcal{E}_0$.
The model admits two classes of equilibria: the washout state $\mathcal{E}_0$, which always exists, and one or several coexistence equilibria $\mathcal{E}^*$. A coexistence equilibrium exists if and only if the curves $\gamma_1$ and $\gamma_2$, associated with the functions $F_1$ and $F_2$, intersect at positive values. Its local stability is determined using the Routh–Hurwitz criterion \cref{Expr-CRH}, since the eigenvalues of the Jacobian cannot be obtained explicitly. We show that $\mathcal{E}^*$ may lose stability through a supercritical Hopf bifurcation when condition $c_4$ \cref{Expc4} is not always satisfied.

Using numerical continuation with \textsc{MatCont} \cite{MATCONT2023}, we explored the operating diagram of system \cref{ModelMutualism} in the $(S_{in},D)$ parameter plane. In the case without mortality ($D_1=D_2=D$), positive equilibria appear or disappear only through Limit Point (saddle-node) bifurcations. By contrast, introducing distinct removal rates ($D_i=\alpha_i D+a_i$, with $\alpha_i>0$ and $a_i\neq 0$) enriches the system dynamics significantly: additional bifurcations arise, including Hopf bifurcations and Limit Points of Cycles (LPC). 
We also identified several codimension-two bifurcations, including Bogdanov--Takens (BT), cusp bifurcations of cycles (CPC), 1:1 (R1) and 1:2 (R2) resonances, and generalized Hopf (GH, Bautin) bifurcations. These bifurcations are difficult to obtain analytically but were detected numerically. They organize the branches of periodic solutions and induce changes in their stability, enriching the range of possible dynamical behaviors.

The one-parameter bifurcation diagrams in $S_{in}$ (for fixed $D$) further illustrate the qualitative transitions in equilibrium and periodic solutions as the system moves across different regions of the operating diagram (\cref{FigDO}), revealing the emergence of stable and unstable limit cycles and the possible coexistence of multiple attractors.
In particular, we observed tri-stability, where trajectories may converge either to the washout equilibrium or to one of two stable limit cycles (one emerging from a Hopf bifurcation and the other from an LPC bifurcation), depending on initial conditions. Consequently, coexistence may occur near a positive equilibrium or along a stable periodic orbit.  

Overall, our study demonstrates that incorporating distinct mortality rates profoundly influences the dynamics of obligate mutualism in chemostats. It generates a wide spectrum of bifurcation phenomena and multistability scenarios, highlighting the complex interplay between species interactions and mortality in microbial ecosystems.

An important message of this work is that mortality cannot be neglected: it significantly enriches the system dynamics, leading to stable periodic solutions, tri-stability, limit point of cycles (LPC) and period-doubling (PD) bifurcations, as well as generalized Hopf (GH) bifurcations and homoclinic bifurcations.
These behaviors provide a more coherent picture consistent with natural observations, where multiple species may coexist around stable limit cycles. In contrast, models neglecting mortality, such as those in El Hajji \cite{ElhajjiIJB2018,ElhajjiJBD2009}, predict coexistence only near positive equilibrium points and fail to capture this richness of dynamics.
\appendix
\section{The case of the growth functions \cref{SpeciFunc}} \label{AppendixA}
In this section, we analyze the particular case where the growth functions are given by \cref{SpeciFunc}. Our goal is to characterize the maximal number of solutions of equation \cref{EquExis_xji} on the biologically relevant interval.
The following result shows that, under condition \cref{CondExis_xji}, this number is exactly two.
\begin{proposition}
Let the growth functions be given by \cref{SpeciFunc} and assume that condition \cref{CondExis_xji} holds. 
Then, for each $i,j \in \{1,2\}$ with $i\neq j$, equation \cref{EquExis_xji} has exactly two solutions $x_j^1$ and $x_j^2$ in the interval $]0,DS_{in}/D_j[$.
\end{proposition}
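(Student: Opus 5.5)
The plan is to write the equation explicitly and reduce it to a polynomial equation of degree two in $x_j$. With $f_i$ given by \cref{SpeciFunc}, set $u=S_{in}-\tfrac{D_j}{D}x_j$, so that $u\in(0,S_{in})$ for $x_j\in\,]0,DS_{in}/D_j[$. Equation \cref{EquExis_xji} then reads
\[
m_i\, u\, x_j = D_i (K_i+u)(L_i+x_j).
\]
Both factors $K_i+u$ and $L_i+x_j$ are positive on the interval, so clearing denominators is legitimate and introduces no spurious roots. Substituting $u$ as an affine function of $x_j$ gives $P(x_j)=0$, where $P$ is a polynomial of degree at most two. The coefficient of $x_j^2$ comes from $m_i u x_j - D_i u x_j$ and equals $-(m_i-D_i)D_j/D$.

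Next I handle the degenerate case. If $m_i\le D_i$, then $\phi_i(x_j)<m_i\le D_i$ on the open interval, because both fractions in \cref{SpeciFunc} are strictly less than $1$. Condition \cref{CondExis_xji} would then force $\max\phi_i\ge D_i$, which is impossible, since the endpoint values vanish. So \cref{CondExis_xji} gives $m_i>D_i$. Hence $P$ is a genuine quadratic and has at most two real roots. This proves the upper bound, i.e.\ \cref{hyp4}.

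For existence of two roots, I use the endpoint signs from the proof of \cref{LemCondExis_xji}: $\phi_i(0)=\phi_i(DS_{in}/D_j)=0<D_i$. Since $\phi_i$ is continuous and $\max\phi_i\ge D_i$, attained at some interior point $\bar x$, the intermediate value theorem gives one root in $]0,\bar x]$ and one in $[\bar x,DS_{in}/D_j[$. When $\max\phi_i>D_i$ these two roots are distinct, so there are exactly two, $x_j^1<x_j^2$.

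The main obstacle is the boundary case $\max\phi_i=D_i$. There the two roots coincide as a double root of $P$, so "exactly two" must be read with multiplicity. This case is consistent with the LP (tangency) situation in \cref{Fig-NmbExSol_xji}. To make this precise I would note that $\phi_i-D_i$ and $P$ have the same sign on the interval, up to a positive factor, and that a root of $\phi_i-D_i$ at an interior maximum is a double root of $P$. I would also record, for later use in \cref{LemFi}, that $\phi_i$ is unimodal: $P$ is a concave quadratic, so $\{\phi_i\ge D_i\}$ is an interval for every level $D_i$.
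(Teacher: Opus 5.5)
Your proof is correct, but it takes a different route from the paper.

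The paper works with the shape of $\phi_i$. It computes $\phi_i'$ and observes that its numerator $N(x_j)$ is a quadratic with positive discriminant, so $\phi_i$ has at most two critical points. It then infers from $\phi_i>0$ on the interior and $\phi_i=0$ at both ends that $\phi_i$ has a unique interior maximum, so the level $y=D_i$ is crossed exactly twice.

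You instead clear denominators in the equation itself, using $P(x_j)=(K_i+u)(L_i+x_j)\bigl(\phi_i(x_j)-D_i\bigr)$. The upper bound is then just the degree of $P$, once you have extracted $m_i>D_i$ from \cref{CondExis_xji}. Existence follows from the intermediate value theorem on each side of the maximizer.

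Your route is shorter and self-contained, and it has two advantages:
\begin{itemize}
\item It bypasses a step the paper states without detail. Passing from ``at most two critical points'' to ``unique maximum'' requires noting that two distinct zeros of $N$ inside the interval would give $\phi_i'$ a sign pattern incompatible with $\phi_i$ rising from $0$ and returning to $0$.
\item It recovers unimodality anyway, since $P$ is a concave quadratic for every level in $(0,m_i)$.
\end{itemize}
The paper's route, in turn, describes the graph of $\phi_i$ independently of the level $D_i$, which is what \cref{Fig-NmbExSol_xji} illustrates.

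You are also right to isolate the tangency case $\max\phi_i=D_i$. There the equation has a single solution, a double root of $P$. So ``exactly two'' holds only with multiplicity, or under strict inequality in \cref{CondExis_xji}; the paper's proof passes over this case.
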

\begin{proof}
Let the growth functions be given by \cref{SpeciFunc} and recall the definition 
of $\phi_i$ in \cref{FunPhi_i}, for $i,j \in \{1,2\}$ with $i\neq j$. 
Introducing the notation $\theta_j :=D_j/D$, a straightforward computation yields
$$
\phi_i'(x_j) 
= m_i \,\frac{N(x_j)}{(K_i+S_{in}-\theta_j x_j)^2(L_i+x_j)^2},
$$
where the numerator $N(x_j)$ is a quadratic polynomial :
$$
N(x_j) = \big(\theta_j^2 L_i - \theta_j K_i\big)x_j^2 - 2\theta_j L_i(K_i+S_{in})\,x_j + S_{in}L_i(K_i+S_{in}).
$$
Its reduced discriminant is given by
$$
\Delta'=\theta_j K_i L_i (K_i+S_{in})(\theta_j L_i+S_{in}),
$$
which is positive since all parameters are positive. 
Hence $N(x_j)$ has two distinct real roots and therefore the equation $\phi_i'(x_j)=0$ has at most two solutions in the interval $]0,DS_{in}/D_j[$.
According to \cref{hyp1,hyp2,hyp3}, the function $\phi_i(x_j)$ is positive, continuous, non-monotonic on $]0, DS_{in}/D_j[$ and satisfies $\phi_i(0) = \phi_i(DS_{in}/D_j) = 0$.
Therefore, the function $\phi_i(x_j)$ necessarily has a unique positive maximum in $]0, DS_{in}/D_j[$.
Under condition \cref{CondExis_xji}, the horizontal line $y=D_i$ intersects the graph of $\phi_i$ at exactly two points $x_j^1$ and $x_j^2$ in $]0,DS_{in}/D_j[$ (see \cref{Fig-NmbExSol_xji}).
\end{proof}

The following result characterizes the extrema of the function $F_i(x_j)$ when the growth functions are specified by \cref{SpeciFunc}.
\begin{proposition}
Let the growth functions be given by \cref{SpeciFunc}. 
Then, for each $i,j \in \{1,2\}$ with $i\neq j$, the function $x_j \mapsto F_i(x_j)$ 
admits a unique extremum at some point $x_j^* \in ]x_j^1,x_j^2[$,
where $x_j^1$ and $x_j^2$ denote the two positive solutions of 
$f_i(S_{in}-D_jx_j/D,x_j)=D_i$.
\end{proposition}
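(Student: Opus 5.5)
For the functions \cref{SpeciFunc}, the plan is to make $F_i$ \emph{explicit}: solve the defining equation of \cref{LemFi} for the substrate value, show that $F_i$ is strictly concave, and then combine concavity with the boundary values $F_i(x_j^1)=F_i(x_j^2)=0$. I treat $i=1$, $j=2$; the case $i=2$, $j=1$ is identical after exchanging indices ($K_1,L_1,m_1,D_1\leftrightarrow K_2,L_2,m_2,D_2$).

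\emph{Step 1 (explicit formula for $F_1$).} Along $\gamma_1$ we have $f_1(S,x_2)=D_1$ with $S=S_{in}-\tfrac{D_1}{D}x_1-\tfrac{D_2}{D}x_2$. With \cref{SpeciFunc} this reads
$$
\frac{S}{K_1+S}=r(x_2):=\frac{D_1(L_1+x_2)}{m_1x_2}.
$$
Since $S\ge 0$ on $M$, necessarily $r(x_2)<1$, which forces $m_1>D_1$ and $x_2>D_1L_1/(m_1-D_1)$. Solving for $S$ gives $S=g(x_2)$ with
$$
g(x_2)=\frac{K_1D_1(L_1+x_2)}{(m_1-D_1)x_2-D_1L_1}.
$$
The denominator is positive on $[x_2^1,x_2^2]$, because both endpoints satisfy the equation with $S\ge 0$ and the constraint above is an interval condition. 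Therefore
$$
F_1(x_2)=\frac{D}{D_1}\Bigl(S_{in}-\tfrac{D_2}{D}x_2-g(x_2)\Bigr),\qquad x_2\in[x_2^1,x_2^2].
$$
This agrees with the implicit definition in \cref{LemFi}.

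\emph{Step 2 (strict concavity).} Set $a=m_1-D_1>0$ and $b=D_1L_1>0$. Then
$$
\frac{L_1+x_2}{ax_2-b}=\frac1a+\frac{L_1+b/a}{ax_2-b},
$$
so that
$$
g''(x_2)=\frac{2K_1D_1a\,(L_1+b/a)}{(ax_2-b)^3}>0
$$
wherever $ax_2>b$. The term $S_{in}-\tfrac{D_2}{D}x_2$ is affine, hence $F_1''=-\tfrac{D}{D_1}g''<0$ and $F_1$ is strictly concave on $[x_2^1,x_2^2]$.

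\emph{Step 3 (conclusion).} By \cref{LemFi}, $F_1(x_2^1)=F_1(x_2^2)=0$. Rolle's theorem then gives a point $x_2^*\in\,]x_2^1,x_2^2[$ with $F_1'(x_2^*)=0$. Strict concavity makes $F_1'$ strictly decreasing, so this critical point is unique, and it is a maximum. Because $x_2^1<x_2^2$ are distinct (Proposition A.1), the interval is nondegenerate and $F_1>0$ inside it.

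The only delicate point is Step 1: one must check that the denominator $(m_1-D_1)x_2-D_1L_1$ stays positive on the whole interval, so that $g$ is smooth and convex there. I would obtain this from $S=g(x_2)\ge 0$ at $x_2^1$ together with the monotonicity of $r$, which is decreasing in $x_2$, so $r<1$ persists for $x_2\ge x_2^1$.
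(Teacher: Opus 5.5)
Your proof is correct, but it takes a different route from the paper's.

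The paper also starts from an explicit rational formula for $F_i$ (your formula, after clearing denominators). It then works with $F_i'$:
\begin{itemize}
\item it notes that the numerator of $F_i'$ is a quadratic with positive discriminant, so $F_i'$ has at most two zeros;
\item it then invokes $F_i(x_j^1)=F_i(x_j^2)=0$ and $F_i>0$ inside to assert a unique maximum.
\end{itemize}
As written, this last step only gives existence of an interior maximum. ``At most two critical points'' does not by itself exclude a second one in $]x_j^1,x_j^2[$. The paper also never checks where the pole $(m_i-D_i)x_j=D_iL_i$ of its formula lies relative to the interval.

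Your decomposition of $F_i$ as an affine function minus the hyperbola $g$ settles both points at once. The monotonicity of $r$ puts the pole to the left of $x_j^1$, so $g$ is smooth on the whole interval. Strict concavity makes $F_i'$ strictly decreasing, so Rolle's theorem gives exactly one critical point, and it is a maximum.

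The two approaches meet in a simple identity. The paper's numerator equals $K_iD_iL_im_i-\tfrac{D_j}{D}\bigl((m_i-D_i)x_j-D_iL_i\bigr)^2$. Its two roots are symmetric about the pole, so at most one can lie in $]x_j^1,x_j^2[$; this is the same fact your concavity argument captures. The paper's route, completed this way, also gives the location explicitly: $x_j^*=\bigl(D_iL_i+\sqrt{K_iD_iL_im_iD/D_j}\bigr)/(m_i-D_i)$.

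One small slip: you dropped a factor $a$ in the second derivative. The correct value is
$g''(x_2)=2K_1D_1a^2(L_1+b/a)/(ax_2-b)^3=2K_1D_1m_1L_1(m_1-D_1)/(ax_2-b)^3$.
This does not change the sign, so the argument is unaffected.
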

\begin{proof}
Consider the specific growth functions \cref{SpeciFunc}. 
The function $F_i$ defined in \cref{LemFi} can be written as
$$
F_i(x_j) \;=\; 
\frac{D_i\,(K_i + S_{in} - \theta_j x_j)(L_i + x_j) - m_i x_j \,(S_{in} - \theta_j x_j)}
{\theta_i \big((D_i-m_i)x_j + D_i L_i\big)},
$$
where $\theta_j := D_j / D$. A straightforward calculation of the derivative yields
$$
F'_i(x_j) = \frac{- \theta_j (D_i - m_i)^2 x_j^2 - 2 \, \theta_j D_i L_i (D_i - m_i) x_j + D_i L_i (m_i K_i - D_i \theta_j L_i)}{D_i \big((D_i-m_i)x_j + D_i L_i\big)^2}.
$$
The numerator is a quadratic polynomial in $x_j$ with reduced discriminant
$$
\Delta' = \theta_j\, (D_i-m_i)^2 D_i\, L_i\, K_i \, m_i  > 0,
$$
so it has two distinct real roots. Hence, $F'_i(x_j)=0$ has at most two solutions in the interval $]x_j^1, x_j^2[$.
By \cref{LemFi}, the function $F_i(x_j)$ is positive and continuous on $]x_j^1, x_j^2[$, and vanishes at the endpoints $F_i(x_j^1)=F_i(x_j^2)=0$. Therefore, it attains a unique positive maximum in this interval.  
\end{proof}
\section{Parameter values used in this paper}        \label{AppendixC}
All parameter values used in the numerical simulations are listed in \cref{Tab-Allpar}. 
The software \textsc{Maple} was used to generate \cref{Fig-NmbExSol_xji,FigVP}, 
while \textsc{MatCont} was used to produce 
\cref{FigDOSSMort,FigDO,Fig-DBD0195,Fig-TnearHom,Fig-DB-D02,Fig-DB3Cases}.
\begin{table}[ht]
\caption{Parameter values used for model \cref{ModelMutualism} when the growth rates $f_1$ and $f_2$ are given by \cref{SpeciFunc}.}   \label{Tab-Allpar}
\vspace{-0.2cm}
\centering{
\begin{tabular}{ @{\hspace{1mm}}l@{\hspace{2mm}} @{\hspace{2mm}}l@{\hspace{2mm}} @{\hspace{2mm}}l@{\hspace{2mm}} @{\hspace{2mm}}l@{\hspace{2mm}}
                 @{\hspace{2mm}}l@{\hspace{2mm}} @{\hspace{2mm}}l@{\hspace{2mm}} @{\hspace{2mm}}l@{\hspace{2mm}} @{\hspace{2mm}}l@{\hspace{2mm}}
                 @{\hspace{2mm}}l@{\hspace{2mm}} @{\hspace{2mm}}l@{\hspace{2mm}} @{\hspace{2mm}}l@{\hspace{2mm}}}
Parameter                     & $m_1$ & $K_1$ & $L_1$ & $m_2$  & $K_2$  &  $L_2$  &  $\alpha_1$  &  $\alpha_2$  &  $a_1$   &  $a_2$ \\ \hline
\begin{tabular}{l}
\cref{FigDO,Fig-DBD0195,Fig-TnearHom,Fig-DB-D02,FigVP,Fig-DB3Cases} \\
\cref{FigDOSSMort}
\end{tabular}
                    & 4 & 0.2 & 0.3 & 4 & 0.1 & 0.2 & 1 & 1 &
\begin{tabular}{l}
0.8   \\
0
\end{tabular}
                     &
\begin{tabular}{l}
1.5   \\
0
\end{tabular}
\end{tabular}}
\end{table}
\section*{Acknowledgments} 
The authors are grateful to Tewfik Sari for his valuable comments and suggestions, which greatly improved this work.
They also acknowledge the support of the Euro-Mediterranean research network \href{https://treasure.hub.inrae.fr/}{Treasure} and the Tunisian Ministry of Higher Education and Scientific Research (Young Researchers’ Encouragement Program: 06P1D2024-PEJC).
 
\end{document}